\newtheorem{theorem}{Theorem}[section]
\newtheorem{corollary}[theorem]{Corollary}
\newtheorem{definition}[theorem]{Definition}
\newtheorem{question}[theorem]{Question}
\newtheorem{lemma}[theorem]{Lemma}
\newtheorem{proposition}[theorem]{Proposition}
\newtheorem{remark}[theorem]{Remark}
\newcommand{\Hit}{\mathrm{Hit}}
\newcommand{\gsf}{g_{\mathrm{sf}}}
\newcommand{\LC}{\mathrm{Lim}}
\newcommand{\msHi}{\msH_{\infty}}
\newcommand{\tlh}{\tilde{h}}
\newcommand{\tH}{\widetilde{H}}
\newcommand{\UpMoc}{\Upsilon^{\mathrm{Moc}}}
\newcommand{\MMHLC}{\MM_{\mathrm{Hit},\mathrm{Lim}}}
\newcommand{\can}{\mathrm{can}}
\newcommand{\Tot}{\mathrm{Tot}}
\newcommand{\MGC}{\mathcal{G}^{\mathbb{C}}}
\newcommand{\dbarE}{\bar{\partial}_E}
\newcommand{\ivt}{\iota_t^{\mathrm{v}}}
\newcommand{\sfm}{\mathrm{sf}}
\newcommand{\nS}{\widetilde{S}_q}
\newcommand{\nSt}{\widetilde{S}_{t^2 q}}
\newcommand{\da}{\dagger}
\newcommand{\Prym}{\mathrm{Prym}}
\newcommand{\MH}{\mathcal{H}}
\newcommand{\MM}{\mathcal{M}}
\newcommand{\MO}{\mathcal{O}}
\newcommand{\Ker}{\mathrm{Ker}}
\newcommand{\End}{\mathrm{End}}
\newcommand{\MC}{\mathcal{C}}
\newcommand{\Tr}{\mathrm{Tr}}
\newcommand{\st}{*}
\newcommand{\pa}{\partial}
\newcommand{\ti}{\times}
\newcommand{\bz}{\bar{z}}
\newcommand{\vp}{\varphi}
\newcommand{\ME}{\mathcal{E}}
\newcommand{\al}{\alpha}
\newcommand{\na}{\nabla}
\newcommand{\ep}{\epsilon}
\newcommand{\calA}{\mathcal A}
\newcommand{\calB}{\mathcal{B}}
\newcommand{\calE}{\mathcal E}
\newcommand{\calF}{\mathcal F}
\newcommand{\calH}{\mathcal H}
\newcommand{\calK}{\mathcal K}
\newcommand{\calL}{\mathcal L}
\newcommand{\calM}{\mathcal{M}}
\newcommand{\calO}{\mathcal{O}}
\newcommand{\calP}{\mathcal{P}}
\newcommand{\calS}{\mathcal{S}}
\newcommand{\calT}{\mathcal{T}}
\newcommand{\calV}{\mathcal V}
\newcommand{\calW}{\mathcal W}
\newcommand{\calZ}{\mathcal Z}
\newcommand{\MT}{\mathcal{T}}
\newcommand{\del}{\partial}
\newcommand{\RR}{\mathbb R}
\newcommand{\CC}{\mathbb C}
\newcommand{\wt}{\widetilde}
\newcommand{\lam}{\lambda}
\newcommand{\MP}{\mathcal{P}}
\newcommand{\diag}{\mathrm{diag}}
\newcommand{\slf}{\mathfrak{sl}}
\newcommand{\su}{\mathfrak{su}}
\newcommand{\MF}{\mathcal{F}}
\newcommand{\Lam}{\Lambda}
\newcommand{\MMH}{\MM_{\mathrm{Hit}}}
\newcommand{\MB}{\mathcal{B}}
\newcommand{\rank}{\mathrm{rank}}
\newcommand{\Id}{\mathrm{Id}}
\newcommand{\tS}{\widetilde{S}}
\newcommand{\Aut}{\mathrm{Aut}}
\newcommand{\id}{\mathrm{id}}
\newcommand{\MMD}{\MM_{\mathrm{Dol}}}
\newcommand{\Lim}{\mathrm{Lim}}
\newcommand{\SLC}{\mathrm{SL}_2(\mathbb{C})}
\newcommand{\SL}{\mathrm{SL}}
\newcommand{\tg}{\tilde{g}}
\newcommand{\ML}{\mathcal{L}}
\newcommand{\MD}{\mathcal{D}}
\newcommand{\tx}{\tilde{x}}
\newcommand{\LS}{\lesssim}
\newcommand{\ad}{\mathrm{ad}}
\newcommand{\app}{\mathrm{app}}
\newcommand{\model}{\mathrm{model}}
\newcommand{\odd}{\mathrm{odd}}
\newcommand{\even}{\mathrm{even}}
\newcommand{\Stab}{\mathrm{Stab}}
\newcommand{\mbR}{\mathbb{R}}
\newcommand{\tp}{\tilde{p}}
\newcommand{\tZ}{\widetilde{Z}}
\newcommand{\tpi}{\tilde{\pi}}
\newcommand{\tlam}{\tlam}
\newcommand{\mbZ}{\mathbb{Z}}
\newcommand{\mbC}{\mathbb{C}}
\newcommand{\msH}{\mathscr{H}}
\newcommand{\msV}{\mathscr{V}}
\newcommand{\BPic}{\overline{\mathrm{Pic}}}
\newcommand{\Div}{\mathrm{Div}}
\newcommand{\SU}{\mathrm{SU}}
\newcommand{\Pic}{\mathsf{Pic}}
\newcommand{\bF}{\mathbf{F}}
\newcommand{\supp}{\mathrm{supp}}
\newcommand{\tom}{\tilde{\omega}}
\newcommand{\tME}{\widetilde{\ME}}
\newcommand{\tvp}{\tilde{\vp}}
\newcommand{\bp}{\mathbf{p}}
\newcommand{\coker}{\mathsf{coker}}
\newcommand{\Asy}{\mathsf{Asym}}
\newcommand{\Sy}{\mathsf{Sym}}
\newcommand{\rest}{\: \rule[-3.5pt]{0.5pt}{11.5pt}\,{}}
\newcommand{\Def}{\mathsf{Def}}
\renewcommand{\max}{\mathrm{max}}
\DeclareMathAlphabet{\mathcalligra}{T1}{calligra}{m}{n}
\declaretheoremstyle[
headfont=\color{blue}\normalfont\bfseries,
bodyfont=\color{blue}\normalfont\itshape,
]{colored}
\begin{document}
	\title[The asymptotic geometry on the singular locus]{The asymptotics of the $\SLC$-Hitchin metric on the singular locus: Subintegrable systems}
	\author[He]{Siqi He}
	\email{sqhe@amss.ac.cn}
	\address{Morningside Center of Mathematics,
		Chinese Academy of Sciences,
		Beijing, 100190 China}

	\author{Johannes Horn}
	\email{horn@math.uni-frankfurt.de}
	\address{Johann Wolfgang Goethe-Universität,
          60325 Frankfurt am Main, Germany }

	\author{Nianzi Li}
	\email{lnz@mail.tsinghua.edu.cn}
	\address{Yau Mathematical Sciences Center, Tsinghua University, Beijing 100084, China}
	%\address{AMSS}
	%\email{sqhe@amss.ac.cn}
         \begin{abstract}
         We study the asymptotic hyperkähler geometry of the $\SLC$-Hitchin moduli space over the singular fibers of the Hitchin fibration. We extend the previously known exponential convergence results for solutions to the Hitchin equation to the class of locally fiducial Higgs bundles defined by a special local description at the singularities of the spectral curve.
         This condition is satisfied by the Higgs bundles contained in certain subintegrable systems introduced by Hitchin. We prove that the restriction of the hyperkähler metric to the subintegrable system converges exponentially fast to the corresponding semi-flat metric along a ray $(\mathcal{E},t\varphi)$. This answers a question posed by Hitchin in \cite{Hitchin2021subintegrable_special_Kaehler}. More generally, we prove that for each stratum of quadratic differentials there is a closed subset of the corresponding Hitchin fibers, such that the restricted hyperkähler metric converges to a generalized semi-flat metric.
% the zeros of $\det\varphi$we extend this convergence result to other strata in the Hitchin moduli space corresponding to general partitions of the zeros of $\det\varphi$.
\end{abstract}
\maketitle
\section{Introduction}
Moduli spaces of Higgs bundles over a Riemann surface $\Sigma$ are a central object of study at the intersection of geometric analysis, algebraic geometry, and mathematical physics. Of particular interest is the $L^2$-hyperkähler metric, arising from solutions to the Hitchin equations. Motivated by the work of Gaiotto, Moore and Neitzke \cite{GMNwallcrossing}, understanding the behavior of this metric, especially in the large Higgs fields limit, has become a central problem where all the above viewpoints interact. % at the intersection of geometric analysis, algebraic geometry, and mathematical physics.
\par The asymptotics of the hyperkähler metric are largely governed by the symplectic geometry of the moduli space. The Hitchin fibration endows a dense subset of the moduli space, the so-called regular locus, with the structure of an algebraically completely integrable system. In particular, by the work of Freed \cite{freed1999special}, the regular locus carries another hyperkähler metric, the so-called semi-flat metric. It was conjectured by Gaiotto, Moore and Neitzke that on the regular locus, the $L^2$-hyperkähler metric is approximated by the semi-flat metric, and that the correction terms decay at the ends of the moduli space. On the regular locus, this conjecture was settled for rank $n$ Higgs bundles on closed Riemann surfaces in \cite{dumasneitzke2019asymptotic,mazzeo2019asymptotic,fredrickson2020exponential,mochizuki2023asymptotic,mochizuki2023asymptotic2,mochizuki2024comparison,mochizuki2025semi} and for certain moduli spaces of Higgs bundles on punctured Riemann surfaces in \cite{fredrickson2022asymptotic,chen2022asymptotic}. \\

In this paper, we consider the asymptotics of the $\SLC$-hyperkähler metric on the singular locus of the Hitchin map, where the torus fibers degenerate. The $\SLC$-Hitchin base is the space of quadratic differentials, and the regular locus projects to quadratic differentials with simple zeros only.
\par
In a first step towards understanding the asymptotics of the hyperkähler metric, one proves the convergence of solutions to the Hitchin equations to so-called limiting configurations. Here limiting configurations are solutions to the decoupled Hitchin equations with singularities at the zero set $Z\subset \Sigma$ of the quadratic differential.% The first part of this paper is concerned with the convergence rate of this limiting process for Higgs bundles in the singular locus.
% \subsection{The convergence rate problem for Higgs bundles with singular spectral curves}
\par
  Let $(\ME, \vp)$ be a stable $\SLC$-Higgs bundle with $\det(\vp) = q$ the associated quadratic differential. Rescaling the Higgs field by a real parameter $t > 0$, one obtains a ray $(\ME, t\vp)$ in the moduli space. By the Kobayashi--Hitchin correspondence, there exists a unique Hermitian metric $H_t$ solving the Hitchin equations, giving rise to a family of flat connections $\MD_t = \nabla_t + \phi_t$, where $\phi_t=t(\vp+\vp^\dagger_{H_t})$.
  % As $t \to \infty$, the behavior of $(\nabla_t, \phi_t)$ reflects refined geometric data of the moduli space.
  For Higgs bundles in the regular Hitchin fibers, foundational works by Mazzeo–Swoboda–Weiss–Witt \cite{mazzeo2012limiting}, Mochizuki \cite{Mochizukiasymptotic,mochizuki2023asymptotic2,mochizuki2024comparison}, and Fredrickson \cite{fredrickson2018generic} established that the corresponding solutions to the Hitchin equations converge to limiting configurations $(\nabla_\infty, \phi_\infty)$. More precisely, on any relatively compact subset $K \subset \Sigma \setminus Z$,
\begin{equation} \label{eq_mochizuki_convergence}
	\lim_{t \to \infty} \| (\nabla_t, \phi_t) - (\nabla_\infty, \phi_\infty) \|_{\MC^l(K)} = 0.
      \end{equation}

\subsection{The convergence rate problem for Higgs bundles with singular spectral curves}
While on the regular fibers the exponential convergence of \eqref{eq_mochizuki_convergence} is proven in \cite{fredrickson2018generic,mochizuki2023asymptotic2}, the rate of convergence for Higgs bundles with singular spectral curves remains largely unexplored. This question was explicitly raised by Mochizuki--Szabó in \cite{mochizuki2023asymptotic}:

\begin{question}{\cite[Page 2, Q2]{mochizuki2023asymptotic}}
	Let $K \subset \Sigma \setminus Z$ be a relatively compact subset. Is the convergence rate in \eqref{eq_mochizuki_convergence} on $K$ dominated by $e^{-C_K t}$ for some $C_K > 0$?
\end{question}

It turns out that the answer reflects the stratification of the singular Hitchin fibers studied in \cite{horn2022semi,horn2022sltypesingularfibers}. Mochizuki--Szabó \cite[Theorem 1.7]{mochizuki2023asymptotic} confirmed exponential convergence in the special case where the Higgs bundle arises as the pushforward of a line bundle on the normalization of the spectral curve. This condition is satisfied on the closed stratum of the stratification of singular Hitchin fibers. To obtain Higgs bundles in other strata, one needs to apply Hecke modifications to the pushforward of a line bundle on the normalized spectral curve. Each stratum can be parametrized by abelian coordinates for line bundles on the normalized spectral curve together with coordinates fixing the Hecke modifications (Hecke parameters). It turns out that there are special Hecke parameters for each stratum such that the limiting configuration is described by a so-called fiducial solution at the higher-order zeros of the quadratic differential. We call the corresponding Higgs bundle \emph{locally fiducial}. For example, the Hitchin section at a singular Hitchin fiber is locally fiducial, but not the pushforward of a line bundle on the normalized spectral curve.

Crucially, we observe that good approximate solutions—analogous to those on the regular locus—can only be constructed for locally fiducial Higgs bundles. For Higgs bundles that are not locally fiducial, the Higgs fields acquire additional degeneracies near the nodes, obstructing standard decoupling techniques. As such, the locus of locally fiducial Higgs bundles provides the natural setting for extending exponential convergence results to singular fibers.

%Our first main result is the following:
\begin{theorem}\label{thm:intro1}
	Suppose $(\ME, \vp)$ is locally fiducial and $q = \det(\vp)$ has at least one zero of odd order. Then for any relatively compact subset $K \subset \Sigma \setminus Z$ and any $l \in \mathbb{N}$, there exist positive constants $C_{l,K}$ and $C'_{l,K}$, independent of $t$, such that
	\[
	\|(\nabla_t, \phi_t) - (\nabla_\infty, \phi_\infty)\|_{\MC^l(K)} \leq C_{l,K} e^{-C'_{l,K} t}.
	\]
\end{theorem}

\subsection{Asymptotics of the hyperkähler metric on the closed strata of singular Hitchin fibers}
In the second part of the paper we return to the question about the asymptotics of the hyperkähler metric. In the recent works \cite{hitchin2019critical,Hitchin2021subintegrable_special_Kaehler}, Hitchin observed that there are certain subsets $\calM^d$ of the singular locus, such that the restricted Hitchin map $\calH^d: \calM^d \to \calB^d$ defines an algebraically completely integrable system of lower dimension. In turn, Freed's construction can be applied to obtain a semi-flat hyperkähler metric $g_{\sfm,d}$ on $\calM^d$. % Hitchin speculated in \cite{Hitchin2021subintegrable_special_Kaehler} that restricted to $\calM^d$ the asymptotics of the $L^2$-hyperkähler metric are in turn described by the semiflat metric $g_{\sfm,d}$.
Hitchin then posed the following question:

\begin{question}\cite[Section 5.4]{Hitchin2021subintegrable_special_Kaehler}
	Does the semi-flat metric $g_{\sfm,d}$ provide an approximation of the restricted hyperkähler metric $g_{L^2}\rest_{\MM^d}$?
\end{question}

\par All Higgs bundles in the subintegrable system $\calM^d$ are locally fiducial. Hence, Theorem \ref{thm:intro1} proves the exponential decay of solutions to the Hitchin equations along the subintegrable systems $\calM^d$. Extending the strategy developed in \cite{mochizuki2023asymptotic2} this allows us to give an affirmative answer to Hitchin's question.

\begin{theorem}\label{thm:intro_main1}
	Let $(\ME,t\vp)$ be a ray in $ \MM^d$ with $d < 2g-2$, then there exists  $\varepsilon > 0$ such that
	\[
	(g_{L^2}|_{\MM^d} - g_{\sfm,d})|_{(\ME, t\vp)} = \MO(e^{-\varepsilon t}),
	\]
	as $t \to \infty$.
\end{theorem}
\noindent Note that the case $d=2g-2$ is treated in \cite{Francoetal_unramified} (see Section \ref{ssec:d=2g-2} for more details).
\par
More generally, our method applies to Higgs bundles in the closed stratum of any singular Hitchin fiber for a quadratic differential with at least one zero of odd order. For each partition $\bp$ of $4g-4$, let $\mathcal{B}_{\bp}$ denote the subset of quadratic differentials on $\Sigma$ such that the zero divisor induces the partition $\bp$. Let $\MM_{\bp,V_{\max}}$ denote the collection of closed strata (see Section~\ref{ssec:Hitchin_subint}). The restricted Hitchin map
\[ \MM_{\bp,V_{\max}}\to \mathcal{B}_{\bp} \]
is no longer an algebraically completely integrable system, as the fiber dimension exceeds the base dimension. Nonetheless, we construct a Riemannian metric $g_{\sfm,\bp}$ on $\MM_{\bp,V_{\max}}$ that naturally generalizes the semi-flat metric and prove:

\begin{theorem}
	Suppose the partition $\bp$ is of length $|\bp|> 2g-2$ and contains at least one odd number. Let $(\ME,t\vp)$ be a ray in $\MM_{\bp,V_{\max}}$. Then  there exists $\varepsilon > 0$ such that
	\[ (g_{L^2}|_{\MM_{\bp,V_{\max}}} - g_{\sfm,\bp})|_{(\ME, t\vp)} = \MO(e^{-\varepsilon t}), \]
	as $t \to \infty$. If $\vert \bp \vert \leq 2g-2$ the result still holds for generic rays.
\end{theorem}
\textbf{Acknowledgements.} The authors wish to express their gratitude to many people for their interest and helpful comments. We especially thank Rafe Mazzeo, Laura Fredrickson, Takuro Mochizuki, Martin Möller and Ricahrd Wentworth. S.H. is partially supported by NSFC grant No.12288201 and No.2023YFA1010500. J. H. acknowledges support by Deutsche Forsch\-ungs\-ge\-meinschaft (DFG, German Research Foundation) through the Collaborative Research Centre TRR 326 \textit{Geometry and Arithmetic of Uniformized Structures}, project number 444845124. N.L. is supported by the Shuimu Tsinghua Scholar Program.

\section{Higgs bundles and the Hitchin equations}
This section provides a concise overview of Higgs bundles, the Hitchin moduli space, the spectral curve construction, and the nonabelian Hodge correspondence. For more comprehensive accounts, see \cite{hitchin1987self, hitchin1987stable, Simpson1992}.
\subsection{Higgs bundles}
Let $\Sigma$ be a compact Riemann surface of genus $g \geq 2$, with structure sheaf $\MO = \MO_\Sigma$ and canonical bundle $K = K_\Sigma$. A holomorphic vector bundle $\calE$ on $\Sigma$ is equivalent to a pair $(E,\overline{\partial}_E)$ consisting of a smooth complex vector bundle $E \to \Sigma$ and a Dolbeault operator $\overline{\partial}_E: E \to \Omega^{0,1}(E)$. A Higgs bundle is a pair $(\mathcal{E}, \varphi)$ of a holomorphic vector bundle $\calE$ and a holomorphic section $\varphi \in H^0(\End(\mathcal{E}) \otimes K)$ called the Higgs field. A pair $(\mathcal{E}, \varphi)$ is called an $\SLC$-Higgs bundle if $\rank(\calE) = 2$, $\det(\mathcal{E})$ is holomorphically trivial, and $\Tr(\varphi) = 0$. In this paper, we primarily focus on $\SLC$-Higgs bundles, although rank one Higgs bundles also play a significant role.

%A Higgs subbundle of a $\SLC$-Higgs bundle $(\mathcal{E}, \varphi)$ is a holomorphic line subbundle $\mathcal{L} \subset \mathcal{E}$ such that $\varphi(\mathcal{L}) \subset \mathcal{L} \otimes K$. In this case, the restriction $\varphi_{\mathcal{L}} := \varphi|_{\mathcal{L}}$ defines a rank one Higgs bundle, and $\varphi$ also induces a Higgs structure on the quotient $\mathcal{E}/\mathcal{L}$.

An $\SLC$-Higgs bundle $(\mathcal{E}, \varphi)$ is called stable (resp. semistable) if for every line subbundle $\mathcal{L} \subset \mathcal{E}$ such that $\varphi(\mathcal{L}) \subset \mathcal{L} \otimes K$, we have $\deg(\mathcal{L}) < 0$ (resp.\ $\leq 0$). %It is polystable if it is isomorphic to a direct sum $(\mathcal{L}, \omega) \oplus (\mathcal{L}^{-1}, -\omega)$, where $\mathcal{L}$ is a degree zero line bundle and $\omega \in H^0(K)$.

If $(\ME,\vp)$ is strictly semistable but not polystable, then the Seshadri filtration  \cite{seshadri1967space} introduces a rank one Higgs bundle $(\mathcal{L}, \omega)$ with $\deg(\mathcal{L}) = 0$, such that the associated graded object $\mathrm{Gr}(\mathcal{E}, \varphi)$ is polystable and isomorphic to $(\mathcal{L}, \omega) \oplus (\mathcal{L}^{-1}, -\omega)$, and we say these two Higgs bundles are S-equivalent. We write $\mathcal{E} = (E, \bar{\partial}_E)$ and define the space
\[ \MC := \{ (\bar{\partial}_E, \varphi) \mid \bar{\partial}_E \varphi = 0, \Tr(\varphi) = 0  \}\,.
  \]
Let $\MC^s$ and $\MC^{ss}$ denote the subsets of stable and semistable Higgs bundles, respectively. The action of the complex gauge group $\MGC := \Aut(E)=\Gamma(\mathrm{GL}(E))$ on $\MC$ is given by $ (\bar{\partial}_E, \varphi)\cdot g:= (g^{-1} \comp \bar{\partial}_E \comp g,\ g^{-1} \varphi g),$ where $g \in \MGC.$

There exists a finite-dimensional quasi-projective moduli space, the Dolbeault moduli space, denoted by $\MMD$. It is constructed using geometric invariant theory, and its closed points correspond to isomorphism classes of polystable $\SLC$-Higgs bundles. We use $\MMD^{\mathsf{st}}$, or simply $\mathcal{M}^{\mathsf{st}}$, to denote the moduli space of stable $\SLC$-Higgs bundles. For more details, we refer the reader to \cite{Nitsure:91, simpson1994moduli}.

\subsection{Spectral curves and the Hitchin fibration}
The Hitchin fibration is the map
\[
\MH: \MM \to H^0(K^2), \quad [(\mathcal{E}, \varphi)] \mapsto \det(\varphi)\,.
\]
In the $\SLC$-case, the Hitchin base is the vector space of holomorphic quadratic differentials on $\Sigma$ which we denote $\MB := H^0(K^2)$. Hitchin \cite{hitchin1987self,hitchin1987stable} showed that $\MH$ is a proper map and defines a fibration by (torsors of) abelian varieties over the open subset $\MB^{\mathrm{reg}} \subset \MB$, consisting of quadratic differentials with simple zeros only. The discriminant locus or the singular locus $\MB^{\mathrm{sing}} := \MB \setminus \MB^{\mathrm{reg}}$ contains differentials with multiple zeros or that vanish identically. For any $q \in \MB$, the fiber $\mathcal{M}_q := \MH^{-1}(q)$ is the moduli space of Higgs bundles with spectral data $q$. A fiber $\mathcal{M}_q$ with  $q\in \MB^{\mathrm{sing}}$ is referred to as a singular Hitchin fiber.

The Hitchin fibers are identified with an abelian torsor by the spectral correspondence. To describe the spectral curve, consider the total space $\Tot(K)$ of the canonical bundle $K$, with projection $\pi: \Tot(K) \to \Sigma$. The pullback bundle $\pi^* K$ has a tautological section $\lambda \in H^0(\Tot(K), \pi^* K)$. For $q \in H^0(K^2)$, the associated spectral curve $S_q$ is defined as $S_q=\{\lam^2-\pi^*q=0\}\subset \Tot(K)$. The projection $\pi: S_q \to \Sigma$ is a branched double cover and the curve $S_q$ is smooth if and only if $q$ has only simple zeros—that is, when $q \in \MB^{\mathrm{reg}}$.

When $S_q$ is irreducible, the BNR correspondence provides a description of the rank two Hitchin fiber. Let $\BPic^d(S)$ be the moduli space of degree $d$ rank 1 torsion-free
sheaves on $S_q$, and $\BPic(S)=\prod_{d\in \mathbb{Z}}\BPic^d(S)$ \cite{cyril1979compactification}.
Then $\BPic^d(S)$ is an irreducible projective scheme containing
$\Pic^d(S)$ as an open subscheme. When $S$
is smooth, all torsion-free sheaves are locally free, i.e.\ $\BPic^d(S)=\Pic^d(S)$. Now, we can formulate the BNR-correspondence:

\begin{theorem}[{\cite{hitchin1987stable,bnr1989spectral}}]
	\label{thm_BNRcorrespondence}
	Let $q\in H^0(K^2)$ be a quadratic differential that is not the square of an abelian differential. There is a bijective correspondence between
	points in $\BPic(S_q)$ and
	isomorphism classes of rank 2 Higgs pairs $(\ME,\vp)$ with
	$\Tr(\vp)=0$ and
	$\det(\vp)=q$. Explicitly:
	if $\calL\in \BPic(S_q)$, then $\ME:=\pi_{\st}(\calL)$ is a rank 2 holomorphic vector
	bundle, and the homomorphism $\pi_{\st}\calL\to \pi_{\st}\calL\otimes K\cong
	\pi_{\st}(\calL\otimes \pi^{\st}K)$ given by multiplication by the
	tautological section $\lam$ defines the Higgs field $\vp$.
      \end{theorem}

For the $\SLC$-version of this statement, see \cite{HauselPauly2012}. However, in this work we take another approach to the singular Hitchin fibers developed in \cite{horn2022semi} by the second author. In a first step one stratifies the fibers by the vanishing of the Higgs field. Sequentially each stratum can be parametrized by line bundles on the normalized spectral curve plus Hecke parameters at the higher order zeros of the quadratic differentials. This will be reviewed in detail in Section \ref{subsec:stratification}. Let us fix notation for the normalized spectral curve.
\subsection{The normalized spectral curve}
For each $0 \neq q \in \MB^{\mathrm{sing}}$ we have a diagram of spectral curves
\begin{equation}
	\label{eq_normalization}
	\begin{tikzcd} \widetilde{S}_q \ar[r,"n"]\ar[rd,"\tilde{\pi}_q"] & S_q  \ar[d,"\pi_q"] \\ & \Sigma
	\end{tikzcd}
\end{equation}
where $S_q$ is the spectral curve and $\wt{S}_q=\mathsf{Norm}(S_q)$ is the normalization of $S_q$. The genus of $S_q$ is $\tg:=g(\wt{S}_q)=2g-1+\frac{r_{\odd}}{2}$, where $r_{\odd}$ denotes the number of odd order zeros of $q$. Let $\tilde{\lambda}= n^* \lambda \in \Gamma(\tilde{\pi}_q^*K)$ be the pullback of the eigensection of the Higgs field satisfying $\tilde{\lambda}^2+ \tilde{\pi}_q^*q=0$. Let $\Lambda=\mathsf{div}{\tilde{\lambda}}$. By definition, the branch locus of $\tilde{\pi}$ are the zeros of $q$ of odd order. Let $D=D(q)=\lfloor \frac{\mathsf{div}(q)}{2} \rfloor$. Then the branch divisor is $\mathsf{div}(q)-2D$. Let $p_D: \mathsf{Tot} K(-D) \to \Sigma$ be the projection and $T: \mathsf{Tot} K(-D) \to p_D^*K(-D)$ be the tautological section. The normalized spectral curve is the zero locus $\{T^2+p_D^*(q/s_D^2)=0\} \subset \mathsf{Tot} K(-D)$, where  $s_D \in H^0(\Sigma, \mathcal{O}(D))$ is vanishing on $D$. In particular, the canonical bundle of the normalized spectral curve is $K_{\wt{S}_q}= \tilde{\pi}^*K_\Sigma(-D)$ by adjunction formula. Furthermore, we have
\begin{align} \tilde{\pi}_*\calO_{\wt{S}_q}=\calO_\Sigma \oplus K_\Sigma^{-1}(D). \label{equ:push_structure_sheaf}
 \end{align}
 There is also a canonical differential $\tilde{\omega} \in H^0(\widetilde{S}_q,K_{\widetilde{S}_q})\cong H^{(1,0)}(\wt{S}_q)$, such that $\tilde{\omega}^2+\tilde{\pi}^*q=0 \in H^0(\widetilde{S}_q,K_{\widetilde{S}_q}^2)$. The sections $\tilde{\omega}$ and $\tilde{\lambda}$ differ by the ramification divisor $R \subset \wt{S}_q$
 \[ \mathsf{div}\tilde{\omega}=\mathsf{div} \tilde{\lambda} + R\,.
   \]   The differential $\tom$ can be understood as a generalization of the Seiberg-Witten differential in this situation. %{\color{blue} What does that mean?}

\subsection{The stratification of the Hitchin base}\label{ssec:stratification_base}
For each partition
\[ \bp=(m_1,\cdots, m_{r_{\even}},n_1,\cdots,n_{r_{\odd}}) \in \mathbb{N}^{r_{\even}+r_{\odd}}
\]
with $\sum_{i=1}^{r_\odd}n_i+ \sum_{j=1}^{r_\even}m_j=4g-4$ we denote by $\mathcal{B}_{\bp} \subset H^0(\Sigma,K_{\Sigma}^2)$ the subvariety of quadratic differentials $q$ with zero divisor $\mathsf{div}(q)=\sum_{i=1}^{r_\odd}n_ip_i+ \sum_{j=1}^{r_\even}m_jp_j'$ for pairwise distinct $p_i,p_j' \in \Sigma$. The partitions of the form $\bp=(2,\dots,2,1,\dots,1)$ will be important below and we introduce the shorthand $\calB^d=\calB_{\bp}$ for stratum associated to the partition with $d$ twos and $4g-4-2d$ ones.
\par For  $q\in \calB_{\bp}$, the normalized spectral curves have fixed genus $\tg:=g(\wt{S}_q)=2g-1+\frac{r_{\odd}}{2}$. By varying $q$ we obtain the family of normalized spectral curves $\wt{S} \to \calB_{\bp}$ with degree two holomorphic map $\tilde{\pi}: \widetilde{S} \to  \calB_\bp \times \Sigma$.

Given a quadratic differential $q$, we define the divisor
\begin{equation}
	D_0(q)=(n_1-1)p_1+\cdots+(n_{r_{\odd}}-1)p_{r_{\odd}}+(m_1-1)p_1'+\cdots+(m_{r_{\even}}-1)p_{r_{\even}}'
\end{equation}
with $\deg(D_0(q))=4g-4-(r_{\even}+r_{\odd})$. %Let further $D^{\high}$ be the divisor of higher order zeroes of the quadratic differential
%\[ D^{\high}=\sum_{i}m_ip_j + \sum_{j, m_j>1}m_jp_j'.
%\]
%In addition
%\begin{equation}
%  \dim H^0(K^2(-D_0))=3g-3-\deg(D_0)=r_{\even}+r_{\odd}-(g-1).
%\end{equation}

\begin{lemma}\label{lemm:tangent_space_base_stratum} Let $\bp$ be a partition of $4g-4$. Then the tangent space to the stratum $\calB_{\bp}$ is given by
  \[ T_q\mathcal{B}_{\bp}=H^0(\Sigma,K^2(-D_0(q)))\,.
  \] In particular, if $r_{\even}+r_{\odd}> 2g-2$, then the stratum $\calB_{\bp}$ is smooth of dimension $r_{\even}+r_{\odd}-(g-1)$. More generally, as long as $r_{\even}+r_{\odd}-(g-1)>0$, this is the dimension of the tangent space at a smooth point of $\mathcal{B}_{\bp}$.
\end{lemma}
\begin{proof}
  % For $\bp=(2,\dots,2,1,\dots,1)$ this was proven in \cite[§3.4]{hitchin2019critical}.
  In general, the tangent space is contained in the left side, as for a family of quadratic differentials $q_t \in \calB_{\bp}$ locally at a zero of order $k$ we have
  \[ q_t=(z-\gamma(t))^k \mathrm{d} z^{\otimes 2}= (z^k+ \gamma(t)z^{k-1} + \dots ) \mathrm{d} z^{\otimes 2},
  \] where $\gamma(t)$ is the path of the zero on $\Sigma$. Hence, $\frac{\mathrm{d}}{\mathrm{d}t}\rest_{t=0} q_t \in H^0(\Sigma,K^2(-D_0(q)))$. On the other hand, it was proven in \cite[Proposition~4]{Bud2021image_of_strata} that the dimension of the tangent space is $d$, where $d=\dim H^0(X, \calO(\sum_i p_i+\sum_jp_j'))$. Note that $\calO(\sum_i p_i+\sum_jp_j')=K^2(-D_0(q))$ and hence the result follows. The dimension can be computed via the Riemann-Roch theorem.
  %To show that deformations along the stratum exist, we need the upper bound $\deg(D^\high)$.  By Riemann-Roch the condition assures that we can move the higher order zeroes freely on $\Sigma$ and that the other zeros stay simple. Explicitly, for all divisors of the form $D= \sum_{i}m_ix_j' + \sum_{j, m_j>1}m_jy_j'$ the dimension $H^0(\Sigma,K^2(-D))$ is positive and the locus of divisors, where two of the simple zeros collided is of codimension 1.
\end{proof}

Denote by $H^0(\wt{S}_q,K_{\wt{S}})^-$ the $-1$-eigen space of the induced action of the involution $\sigma$ on $H^0(\wt{S}_q,K_{\wt{S}})$. We identify the tangent vectors to the stratum with anti-invariant holomorphic one-forms on the normalized spectral curve via
\begin{equation}\label{eq:tau_map}
	\tau: H^0(\Sigma,K^2(-D_0(q))) \to H^0(\widetilde{S}_q,K_{\widetilde{S}})^-, \quad \dot{q} \mapsto \frac{\tilde{\pi}^*\dot{q}}{2\tilde{\omega}}.
\end{equation}
This map is clearly injective and an isomorphism for $q \in \calB^d$ since the dimensions match.

\subsection{The stratification of the singular fiber}\label{subsec:stratification}
We will recall the stratification of the singular $\SLC$-Hitchin fiber with irreducible spectral curve introduced in \cite{horn2022semi}. Fix $0 \neq q \in \calB$ that has no global square-root.  To $(\calE,\varphi) \in \calH^{-1}(q)$ we can associate a so-called Higgs divisor $\mathsf{div}(\varphi)$ recording the total vanishing of the Higgs field, i.e., $\mathsf{div}(\varphi)|_p=k$ if $\varphi=z^k\varphi'$ with $\varphi'(0)\neq 0$ and $z$ a holomorphic coordinate centered at $p$. By computing the determinant of the Higgs field we obtain $0 \leq \mathsf{div}(\varphi) \leq D(q)=\lfloor \frac{\mathsf{div}(q)}{2} \rfloor$. In particular, the maximal vanishing divisor is
\begin{align} V_{\max}= \lfloor \frac{\mathsf{div}(q)}{2} \rfloor \label{eq_maximal_sigma_divisor}
\end{align}
We decompose the Hitchin fibers according to the Higgs divisors
\[ \calH^{-1}(q)= \bigcup_{0 \leq V \leq D(q)} \calT_V\,, \quad \text{where} \quad \calT_V=\{ (\calE,\varphi)\in \calH^{-1}(q) \mid \mathsf{div}(\varphi)=V\}.
\]
\begin{theorem}[{\cite[Theorem~5.3]{horn2022semi}}]
  Let $0 \leq V \leq D(q)$. Then $\calT_V$ is locally closed and smooth of dimension $3g-3-\deg V$ and we have $\calT_V \subset \overline{\calT}_{V'}$ for $V \geq V'$.
\end{theorem}
The strata $\calT_V$ can be described explicitly using semi-abelian spectral data. For $0 \leq V \leq D(q)$ denote by
\begin{equation} \calP_V= \{ \calL \in \Pic(\wt{S}_q) \mid \mathsf{Nm}(\calL)= K(-V)  \}, \label{eq:twisted_Prym}
\end{equation}
where $\mathsf{Nm}$ is the norm map of $\tpi:\wt{S}_q\to \Sigma$. This is a torsor over the Prym variety $\Prym(\wt{S}_q/\Sigma)= \{ \calL  \in \Pic(\wt{S}_q) \mid \mathsf{Nm}(\calL)=\calO_\Sigma \}$, which is connected if $q$ has at least one zero of odd order and has two connected components otherwise. Its dimension is given by $\dim \Prym(\wt{S}_q/\Sigma)=g-1+\frac12r_{\odd}$ and its tangent space is $H^1(\wt{S},\calO_{\wt{S}})^-$, which denotes the $-1$-eigen space by the induced action of $\sigma$ on $H^1(\wt{S},\calO_{\wt{S}})$.
\begin{theorem}[{\cite[Theorem~6.2]{horn2022semi}}]
  	\label{thm_stratification_fibration}\label{prop_stratification_BNR_data}
        Let $0 \leq V \leq \mathsf{div}(q)$ and let $n_{ss}$ be the number of points $p \in \Sigma$ such that $V_p=\mathsf{div}(q)_p$. There exists a holomorphic map $\mathsf{Ab}: \calT_V \to \calP_V$ with fiber over $\calL \in \calP_V$ given by
           \[ \mathsf{Ab}^{-1}(\calL)=(\mbC^{*})^{k_1}\ti \mbC^{k_2}
           \]
             where $k_1=r_{\even}-n_{ss}$, $k_2=2g-2-\frac12\deg(V)-r_{\even}+n_{ss}-\frac {r_{\odd}}{2}$, and $r_{\even},r_{\odd}$ are the number of even and odd zeros of $q$. In particular, $\dim(\calT_V)=3g-3-\deg(V)$.
\end{theorem}
\begin{proof}
  We explain the proof in some detail as it will be important in the remainder of the work. Here we present an equivalent (in fact dual) reformulation of the approach used in \cite{horn2022semi}. If $(\calE,\varphi)$ is a $\SLC$-Higgs bundle, so is its dual $(\calE^\vee, \varphi^\vee)$. Define a line bundle $\calL^\vee$ on the normalized spectral curve $\tilde{\pi}: \widetilde{S}_q \to \Sigma$ by
  \begin{align} \calL^\vee= \ker( \tilde{\pi}^*\varphi^\vee- \tilde{\lambda} \id_{\tilde{\pi}^* \calE^\vee}) \hookrightarrow \tilde{\pi}^*\calE^\vee\,.
    \label{equ:eigenline_bundle}
  \end{align}
  Dualizing we obtain a morphism $\tilde{\pi}^*\calE \to \calL$ and the pushforward of this morphism composed with the adjoint map yields
  \[ \calE \to \tilde{\pi}_*\tilde{\pi}^*\calE \to \tilde{\pi}_* \calL\,.
  \]
  The composition is a holomorphic Hecke modification $\eta: \calE \to \tilde{\pi}_*\calL$, an inclusion of sheaves that is an isomorphism on $\Sigma\setminus \pi(\mathsf{Sing}(S_q))$. The cokernel $\mathsf{coker}(\eta)$ defines a torsion sheaf supported on $\pi(\mathsf{Sing}(S_q))$. A computation in local coordinates using \cite[Lemma 5.1]{horn2022semi} proves the relation
    \begin{align*}
       \mathsf{length}_p (\coker(\eta)) = \lfloor \frac{\mathsf{div}(q)_p}{2} \rfloor - V_p\,.
    \end{align*}
    Let $D= \lfloor \frac{\mathsf{div}(q)}{2} \rfloor$ as above. Computing determinants we obtain
    \[ \calO_\Sigma=\det(\calE)=\det(\tilde{\pi}_*\calL)(-D + V)= \mathsf{Nm}(\calL) \otimes \det(\tilde{\pi}_* \calO_{\wt{S}_q}) \otimes \calO(-D + V) \,. \]
   By equation~\eqref{equ:push_structure_sheaf} $\det(\tilde{\pi}_* \calO_{\wt{S}_q})=K^{-1}(D)$ we conclude $\mathsf{Nm}(\calL)= K(-V)$ and $\calL \in \Prym_V$ as desired. This defines the holomorphic map $\mathsf{Ab}$.
    \par The fibers of $\mathsf{Ab}$ parametrize the choice of Hecke modification $\eta$ up to equivalence. Locally on $\Sigma$, by changing frame of $\calE$ and $\tilde{\pi}_*\calL$, there is a Smith normal form $\eta=\diag(z^{n},z^{m})$. By construction the Hecke modifications does not change the eigen line bundle $\calL$. Hence, we must have $n=0$ and $m= D_p - V_p$. In particular, the Hecke modification is an isomorphism if $m=0$. For $m>0$, we like to parametrize the Hecke modification by describing it with respect to some preferred frame of $\tilde{\pi}_*\calL$. From here on we distinguish the case of even and odd order zeros of $q$.
    \par
    \textbf{Hecke parameters at a zero of order $2k+1$:} \quad Denote the zero set of $q$ by $Z(q)$. Let $p \in Z(q)$ be a zero of odd order and let $s$ be a frame of $\calL$ in a neighborhood of ${\tilde{\pi}^{-1}p}$. Then $s,zs$ is local frame of the pushforward $\tilde{\pi}_*\calL$ at $p$. The eigensection $s$ projects to zero in $\coker(\eta)$. Therefore, we can assume the Hecke modification to be of the form
    \begin{align} \eta = \begin{pmatrix} 1 & 0 \\ a & z^m \end{pmatrix}. \label{eq:Hecke_mod}
    \end{align} Up to equivalence the Hecke modification is uniquely determined by the truncated germ $u=[a]=u_0+u_1z+ \dots +u_{m-1}z^{m-1} \in \CC[z]/z^m$ (see \cite[Lemma 5.1]{horn2022semi} for details about the dual description). Hence, the Hecke modifications for stratum $\calT_V$ are parametrized by $\CC^{k-V_P}$. This recovers the $u$-coordinate as described in \cite[Proposition~5.8]{horn2022semi}, explicitly $u_s=zu$.
    \par
    \textbf{Hecke parameters at a zero of order $2k$:} \quad
    Let $p \in Z(q)$ be a zero of even order and $\tilde{\pi}^{-1}p=\{y_1,y_2\}$. Choose frames $s_i$ of $\calL$ in a neighborhood of $y_i$ for $i=1,2$. We can choose either of the sections $s_i$ to be fixed and represent every Hecke modification by a matrix of the form \eqref{eq:Hecke_mod}. The condition that up to equivalence the eigen line bundles descend to local line subbundles is $a(0)\neq 0$. Hence, the Hecke modification is uniquely determined by the truncated germ $c=[a]=c_0+c_1z+ \dots c_{m-1}z^{m-1} \in \CC[z]/z^m$ with $c_0 \neq 0$. Therefore, the Hecke modifications for the stratum $\calT_V$ at the zero $p$ are parametrized by $\CC^* \times \CC^{k-V_p-1}$.
  \end{proof}

\textbf{u-coordinates at even order zeros:} \quad  Below it will be more convenient to work with a different parametrization at the even zeros. Fix the frame $s_+=s_1+s_2, s_-=s_1-s_2$ of $\tilde{\pi}_*\calL$ in a neighborhood of $p$. The condition that the eigen sections $s_1,s_2$ descend to eigen sections of $(\calE,\varphi)$ implies that at least one of $s_+$ or $s_-$ descends to a section of $\calE$. This choice leads to two coordinate charts. First, let us assume that $s_+$ descends to $\calE$. Then we can represent the Hecke modification by a matrix of the form \eqref{eq:Hecke_mod} and obtain a coordinate $u_+=[a]=u_{+,0}+u_{+,1}z+ \dots u_{+,m-1}z^{m-1} \in \CC[z]/z^m$. The condition that $s_1,s_2$ descend to section of $\calE$ is equivalent to $u_{+,0} \neq \pm 1$ up to equivalence of Hecke modifications. Second, if $s_-$ descends to a section of $\calE$, we obtain Hecke parameters $u_- \in \CC[z]/z^m$. Therefore, we parametrize the moduli of Hecke parameters $\cong \CC^* \times \CC^{k-V_p-1}$ by two open sets $U_+,U_-$ with coordinates $u_\pm \in \CC^m\setminus \{ \pm 1\} \times \CC^{m-1}$. The transition function on the overlap $u_\pm \neq 0$ is $u_+ \mapsto u_-=[ u_+^{-1}] \in \CC[z]/z^m$.
\par
The advantage of the description in $u$-coordinates is that the Higgs field $\varphi$, which is the pullback of $\tilde{\pi}_*\tilde{\lambda}$ by $\eta$ takes a convenient form. Let $\mathsf{ord}_p(q)=n$ and $m=\lfloor \frac{n}{2}\rfloor - V_p$. With respect to the induced frame of $\calE$ and the corresponding $u$-coordinate the Higgs field is given by
\begin{align}   &\varphi=\begin{pmatrix} u z^{k+1} & z^{m+k+1} \\ z^{k-m}(1-u^2z) & -u z^{k+1} \end{pmatrix} d z \quad \text{for } n=2k+1\,, \label{equ:phi_u-coord_odd}\\
\qquad &\varphi=\begin{pmatrix} u_\pm z^{k} & z^{m+k} \\
  z^{k-m}(1-u_\pm^2) & -u_\pm z^{k} \end{pmatrix} d z \quad \text{for } n=2k\,. \label{equ:phi_u-coord_even}
\end{align}
The $u$-coordinates depend on the choice of local sections of $\calL$ at the preimages of the zeros $\tilde{\pi}^{-1}(Z(q))$. However it is easy to see that the coordinates are invariant under multiplying $s$ at the preimage of an odd zero or $s_+,s_-$  at the preimage of an even zero with a constant (cf. \cite[Proposition~5.8]{horn2022semi}).

\begin{corollary}[Minimal dimensional stratum]\label{coro:max_stratum}
  Let $V=V_{\text{max}}=\lfloor \frac{\mathsf{div}(q)}{2} \rfloor$. Then there is an isomorphism $\calT_{V_{\text{max}}} \cong \calP_{V_{\text{max}}}$ its inverse is given by $\calL \to \tilde{\pi}_*(\calL,\tilde{\lambda})$.
\end{corollary}

\subsection{Rank of Hitchin map}
In this section we will show that the vanishing divisor of the Higgs field controls the rank of the Hitchin map.
%We saw above the stratification of singular fibers by the vanishing divisor of the Higgs field. Now, we take a more global perspective on this. We will show that on the Higgs bundle moduli space the degree of the vanishing divisor controls the rank of the Hitchin map.  Each global stratum will intersect a Hitchin fiber in a union of the strata $\mathcal{T}_V$. This yields a stratification of the whole moduli space.

\begin{definition}\label{def:Mr}
  Let $0 \leq r \leq 3g-3$. Define
  \[ \calM_r=\{(\calE,\varphi) \in \calM^{\mathsf{st}} \mid \rank(d \calH_{(\calE,\varphi)})=r\}.
  \]
\end{definition}

The tangent space of $\mathcal{M}$ at a stable point $(\calE,\varphi)$ is given by the hypercohomology of the complex of sheaves
\begin{align} \mathsf{Def}(\calE,\varphi): \quad \End^0(\calE) \xrightarrow{[\varphi, \cdot ]} \End^0(\calE) \otimes K, \label{equ:def_complex}
\end{align}
where $\End^0(\calE)$ denotes the $\Tr=0$ endomorphisms. The homology sheaves of this complex
\[ \calH^0=\ker[\varphi, \cdot], \quad \calH^1=\coker[\varphi,\cdot]
\] give rise to a spectral sequence with second page given by
\[ 	\begin{tikzcd}
		 E_2^{10}=H^1(\Sigma,\calH^0 ) & E_2^{11}=H^1(\Sigma,\calH^1)  \\
		 E_2^{00}=H^0(\Sigma,\calH^0) & E_2^{01}=H^0(\Sigma,\calH^1).
	\end{tikzcd}
\] From the five-term exact sequence we obtain
\begin{align} 0 \rightarrow H^1(\Sigma,\ker[\varphi,\cdot ]) \rightarrow T_{(\calE,\varphi)}\mathcal{M}=H^1(\Def(\calE,\varphi)) \rightarrow H^0(\Sigma, \coker[\varphi,\cdot]) \rightarrow 0. \label{equ:TM}
\end{align}

\begin{proposition}\label{prop:diff_of_Hit}
  Let $(\calE,\varphi) \in \mathcal{M}^{\mathsf{st}}$ with $\varphi \neq 0$ and vanishing divisor $V=V(\varphi)$. Then there is an exact sequence
\begin{align} 0 \rightarrow  H^1(\Sigma,K^{-1}(V)) \rightarrow T_{(\calE,\varphi)}\calM \rightarrow  H^0(\Sigma,K^{2}(-V)) \oplus H^0(V,\calO_V^2) \rightarrow 0, \label{eq:ses:tangent_space}
\end{align} where the projection to the first factor of the second map is the differential of the Hitchin map.
\end{proposition}
\begin{proof}
  Let $s_V$ be a canonical section of $\calO(V)$. Clearly, $\ker[\varphi,\cdot]=\ker[\varphi/s_V,\cdot]$ and the image of $\varphi/s_V: K^{-1}(V) \to \End^0(\calE)$ is contained in the kernel. Because, $\varphi/s_V \neq 0$ and $\Tr(\varphi)=0$ its point-wise commutator is indeed one-dimensional. Hence, $\ker[\varphi,\cdot]\cong K^{-1}(V)$. The trace pairing
\[ -\Tr(\cdot,\cdot):\quad  \End^0(\calE) \otimes \End^0(\calE) \otimes K(-V) \to K(-V).
\]
identifies the cokernel $\coker[\varphi/s_V,\cdot]$ with $\ker[\varphi/s_V,\cdot]^\vee \otimes K(-V)=K^2(-2V)$. We obtain the following commuting diagram
\[ \begin{tikzcd} 0 \ar[r] & \End^0(\calE) \ar[r,"\id"] \ar[d,"{[ \varphi/s_V,\cdot ]} "] & \End^0(\calE)  \ar[r] \ar[d,"{[ \varphi,\cdot ]}"] & 0 \ar[d] &\\
0 \ar[r] & \End^0(\calE)\otimes K(-V) \ar[r,"s_V \id"] \ar[d,"c_10",two heads] & \End^0(\calE) \otimes K \ar[r] \ar[d,"c_2",two heads] &  \calO_V^3 \ar[r] \ar[d,two heads]& 0 \\
0 \ar[r] & \coker[\varphi/s_V,\cdot ] \ar[r,"s_V"] & \coker[\varphi,\cdot ]\ar[r] & \calO_V^3 \ar[r] & 0 & %\\
%& 0 & 0 & 0 &
\end{tikzcd}.
\] Therefore, the cokernels are related by
\[ \coker[\varphi,\cdot] = \coker[\varphi/s_V,\cdot] \otimes\calO(V) \oplus \calO_V^2\cong K^2(-V) \oplus \calO_V^2.
\] Together with (\ref{equ:TM}) we obtain the exact sequence stated in the proposition.
%\begin{align*} 0 \rightarrow  H^1(\Sigma,K^{-1}(V)) \rightarrow T_{(\calE,\varphi)}\calM_{\SLC} \rightarrow  H^0(\Sigma,K^{2}(-V)) \oplus H^0(V,\calO_V^2) \rightarrow 0.
%\end{align*}
We are left with showing that the differential to the Hitchin map is identified with the projection to $H^0(\Sigma,K^{2}(-V))$. Using the Dolbeault resolution, an element of $H^1(\Def(\calE,\varphi))$ can be represented by $\tau \in A^{1}(\End^0(\calE))$, such that $(\overline{\partial}_E + \ad\varphi)\tau=0$. Then the projection to the torsion-free part cokernel $c_2(\tau^{(1,0)}) \in A^{(1,0)}(\coker([ \varphi,\cdot ]))$ is holomorphic. The torsion-free part of the cokernel $\coker([ \varphi,\cdot ])$ is identified with $K^2(-2V)$ by $\alpha \mapsto -\Tr(\alpha\varphi)$. Hence, the induced map on hypercohomology is given by
\[ H^1(\Def(\calE,\varphi)) \to H^0(\Sigma,K^2(-V)), \quad \tau \mapsto -\Tr(\tau^{(1,0)}\varphi)\,.
\]   Note that for $X\in \mathfrak{sl}(2,\mathbb{C})$ the determinant can be expressed in terms of the trace of the square by $\det(X)=-\tfrac12 \Tr(X^2)$. Hence, the above map is exactly the differential of the Hitchin map.
\end{proof}

\begin{proposition}\label{prop:relation_rank_divisor}
  We have the following identifications. For $r \geq g$
  \[ \calM_r=\left\{ (\calE,\varphi) \in \calM^{\mathsf{st}} \mid \varphi \neq 0, \deg(\mathsf{div}(\varphi))=3g-3-r \right\}
    \] and
    \[ \calM_{g-1} = \left\{ (\calE,\varphi) \in \calM^{\mathsf{st}} \mid \varphi \neq 0, \deg(\mathsf{div}(\varphi))=2g-2, \calO(\mathsf{div}(\varphi)) \ncong K_\Sigma \right\}.
  \]
\end{proposition}
\begin{proof}
By Proposition \ref{prop:diff_of_Hit} for $\varphi \neq 0$ this reduces to the computation of the dimension of $H^0(\Sigma,K^2(-V))$. For $\deg(V) < 2g-2$ this is Riemann-Roch. For $\deg(V)=2g-2$, the dimension equals $g-1$, if and only if $\calO(V) \ncong K$. If $\calO(V) \ncong K$ and $\det(\varphi)\neq0$, the spectral curve is reducible and $(\calE,\varphi)$ is strictly polystable. If the Higgs field $\varphi \neq 0$ is nilpotent, then we can globally write
\[ (\calE,\varphi)=\left(\calL \oplus_b \calL^{-1}, \begin{pmatrix} 0 & \alpha \\ 0 & 0 \end{pmatrix}\right)
\] with $\alpha \in H^0(\Sigma,\calL^2K)$. The condition $\calO(V) \cong K$ implies $\deg(\calL)=0$. Hence, $(\calE,\varphi)$ is again strictly semi-stable. Finally, if $\varphi=0$, the differential of the Hitchin map $\frac12\Tr(\varphi\phi)$ is zero.
\end{proof}
\noindent The first term in the exact sequence \eqref{eq:ses:tangent_space} is the tangent space of the stratum.
\begin{lemma}\label{lemm:cohomology_spectral}
  Let $(\calE,\varphi) \in \calT_V$. Then the tangent space to the stratum of the singular Hitchin fiber indexed by $V$ is given by $T_{(\calE,\varphi)}\calT_V=H^1(\Sigma,K^{-1}(V))$.
\end{lemma}
\begin{proof}
  Note that by \cite[Lemma 1]{Cook_paper} every torsion-free sheaf $\calF$ on $S_q$ is given by the pushforward of a locally free sheaf $\calF'$ on a partial normalization. The partial normalizations of $S_{q,V}$ are the curves defined as the zero locus
  \[ S_{q,V}=Z(\lambda^2+ \det(\varphi)/s_V^2) \subset \mathsf{Tot}(K_\Sigma(-V))\,.
  \] The stratum $\calT_V$ is the subset of torsion-free sheaves that are the pushforward of a locally free sheaf on $S_{q,V}$. Let $\pi_{q,V}:S_{q,V} \to \Sigma$ be the induced projection. This identifies the stratum with the Prym variety $\calT_V \cong \Prym(\pi_{q,V}:S_{q,V} \to \Sigma)$. In particular, we obtain the identification of tangent spaces  $T_{(\calE,\varphi)}\calT_V=T_{\calF}\Prym(S_{q,V})=H^1(S_{q,V},\calO_{S_{q,V}})^-$, where $-$ denotes the anti-invariant part with respect to the involution changing the sheets. By definition of $S_{q,V}$ we have $(\pi_{q,V})_*\calO_{S_{q,V}}=\calO_\Sigma \oplus K^{-1}(V)$. Therefore, the anti-invariant part of the first cohomology can be identified as $H^1(S_{q,V},\calO_{S_{q,V}})^-=H^1(\Sigma,\pi_{V*}\calO_{q,V})^-= H^1(\Sigma,K^{-1}(V))$.
%\par Secondly, by the adjunction formula we have $K_{S_{V}}=\pi_V^*K^2(-V)$ and hence $H^0(S_{V},K_{S_V})=H^0(\Sigma,K \oplus K^2(-V))$ by the projection formula. The result follows be taking the anti-invariant part.
\end{proof}

\subsection{Hitchin's subintegrable system and collections of minimal strata}\label{ssec:Hitchin_subint}
In this subsection, we will review the subintegrable systems introduced by Hitchin in \cite{hitchin2019critical}. A subintegrable system is a subset of a complex symplectic manifold $M$ together with a half-dimensional set of Poisson commuting functions, such that the fibers are Lagrangian tori. Each subset $\calM_r$ contains a subintegrable system as a dense subset. In terms of the stratification of the singular Hitchin fibers the Higgs bundles in the subintegrable system are in the closed stratum $\calT_{V_{max}}$.

Let $\calB^d \subset \calB$ denote the locus of the Hitchin base of quadratic differentials with exactly $d$ double zeros and the other zeros simple that are not a square of an abelian differential. For $q\in \calB^d$ the genus of the normalized spectral curve $\wt{S}_q$ is $4g-3-d$. Note that for a quadratic differential $q \in \calB^d$ the divisor that gives the double zeros is $D(q)=\lfloor \frac{\mathsf{div}(q)}{2} \rfloor=V_{\max}$.

Denote by
\[ \calM^d= \{ (\calE,\varphi) \in \calH^{-1}(\calB^d) \mid \mathsf{div}(\varphi)=V_{\max} \}
\] the collections of closed strata $\calT_{V_\max}$ over $\calB^d$. Then Hitchin showed:
\begin{theorem}[\cite{hitchin2019critical}]
  The restricted Hitchin map $\calM^d \to \calB^d$ defines an algebraically completely integrable system of dimension $6g-6-2d$.
\end{theorem}
As we saw in Corollary \ref{coro:max_stratum} the fibers of $\calM^d$ are torsors for the Prym variety of the normalized spectral curve $\Prym(\tilde{\pi}: \wt{S}_q \to \Sigma)$. By Lemma \ref{lemm:tangent_space_base_stratum} the tangent space to $\calB^d$ is $T_q\calB^d= H^0(\Sigma,K^2(-D(q)))$. Furthermore, by Lemma \ref{lemm:cohomology_spectral} the tangent space to the stratum $\calT_{V_{\max}}$ is $H^1(\Sigma,K^{-1}(D(q)))$. Hence, exact sequence \eqref{eq:ses:tangent_space} can be interpreted as tangential sequence of the restricted Hitchin map
\[ 0 \to T_\calL\calT_{D(q)} \to T_{(\mathcal{E},\varphi)}\calM^d \xrightarrow{d \calH}  T_q \calM^d \to 0.
\]
The duality of the tangent space of the fibers and the tangent space to the base is given by Serre duality $H^1(\Sigma,K^{-1}(D(q)))^\vee=H^0(\Sigma,K^2(-D(q)))$.

\begin{lemma}
  Let $g-1 \leq r \leq 3g-3$ and $d=3g-3-r$. Then $\calM^d \subset \calM_r$ is an open and dense subset.
\end{lemma}
\begin{proof}
  From Proposition \ref{prop:relation_rank_divisor} the subset relation is clear. Further, $\calM^d \subset \calM_r$ is open as locally no extra zeros of $q$ collide and $\calM^d=\calM_{r} \cap \calH^{-1}(\calB^d)$. We conclude denseness by showing that $\calB^d$ is dense in $\calH(\calM^r)$. Note that the dimension of $\calM_{r} \cap \calH^{-1}(q)$ is $r=3g-3-d$ independent of $q \in \calH(\calM_r)$.
  \par
  The vanishing divisor of the Higgs field is bounded by $V \leq D(q)=\lfloor \frac{\mathsf{div}(q)}{2} \rfloor$. Hence, fixing $\deg(V)$ gives a lower bound on $\deg(D)$. We saw in Lemma \ref{lemm:tangent_space_base_stratum} that for a given partition $\bp$ the tangent space to $\calB_\bp$ is contained in $H^0(\Sigma,K^2(D_0(q)))$. It is easy to see that under the lower bound on $\deg(D)$ the generic dimension of $H^0(\Sigma,K^2(D_0(q)))\leq 3g-3-d$ with equality if and only if $\bp=(1,\dots,1,2,\dots,2)$.
\end{proof}

More generally, we will be interested in the collection of closed strata. Let
\[ \calM_{\bp,V_{\max}} =\{ (\calE,\varphi) \in \calH^{-1}(\calB_{\bp}) \mid V(\varphi)=\lfloor \frac{\mathsf{div}(\det\varphi)}{2} \rfloor \}.
\] By Corollary \ref{coro:max_stratum} $(\calE,\varphi)=\tilde{\pi}_*(\calL,\tilde{\lambda})$ for all $(\calE,\varphi) \in \calM_{\bp,V_{\max}}$. Hence, $\calM_{\bp,V_{\max}}$ is a torsor for the universal Prym variety $\Prym(\tilde{\pi}: \widetilde{S} \to \Sigma \times \calB_{\bp})$. As a consequence of Proposition \ref{prop:diff_of_Hit} we obtain:
\begin{lemma}\label{lemm:exact_sequence_M_Vmax}
  Let $\bp$ contain at least one odd integer or $\deg(V_\max)\equiv 0 \pmod 2$. Let $(\calE,\varphi) \in \calM_{\bp,V_{\max}}$ and $\calL$ the corresponding line bundle on $\wt{S}_{q}$. There is an exact sequence of tangent spaces
  \[ \begin{tikzcd}
      T_{\calL}\Prym(\wt{S}_q)=H^1(\Sigma,K^{-1}(V_{\max})) \ar[r,hook]  &T_{(\calE,\varphi)}\calM_{\bp,V_{\max}} \ar[r,two heads, "d \calH"] &T_q\calB_{\bp}=H^0(\Sigma,K^2(-D_0)).
      \end{tikzcd}
  \]
\end{lemma}
\begin{proof}
  From Proposition \ref{prop:diff_of_Hit} and the computation of the tangent space to $\calB_{\bp}$ in Lemma \ref{lemm:tangent_space_base_stratum} we obtain the commutative diagram
  \[
  \begin{tikzcd}
     H^1(\Sigma,K^{-1}(V_{\text{max}})) \ar[r,hook]\ar[d,"="] &  T_{(\calE,\varphi)}\calM_{\bp,V_{\max}} \ar[r,two heads,"d \calH"]\ar[d,hook] & H^0(\Sigma,K^2(-D_0(q))) \ar[d,hook,"\cdot s_{D_0(q)-V_\max}"] \\
    H^1(\Sigma,K^{-1}(V_{\text{max}})) \ar[r,hook] &  T_{(\calE,\varphi)}\calM \ar[r,two heads] & H^0(\Sigma,K^2(-V_{\text{max}})) \oplus H^0(\Sigma,\calO_{V_\text{max}})\,.
  \end{tikzcd}
  \]
  To obtain the surjectivity of the restricted Hitchin map we employ Lemma \ref{lem:exist_sigma_invariant_pointwise} proving the existence of $\sigma$-invariant line bundles in the corresponding torsor over the Prym variety. This requires the extra condition on $q$.
\end{proof}

\section{The Limiting configuration and deformations}
In this section, we introduce the construction of limiting configurations and the associated exponential decay results \cite{mazzeo2012limiting, Mochizukiasymptotic, mochizuki2023asymptotic}. Furthermore, we study the deformation of the moduli space of limiting configurations on singular fibers, extending the construction in \cite{mazzeo2012limiting} from smooth to singular fibers.

\subsection{The limiting configuration equations}

Let $E$ be a trivial, smooth, rank 2 vector bundle over a Riemann surface $\Sigma$, and let $H_0$ be a background Hermitian metric on $E$. Let $\mathfrak{sl}(E)$ (resp.\ $\mathfrak{su}(E)$) denote the bundle of traceless (resp. traceless skew-Hermitian) endomorphisms of $E$.

Let $Z$ be a finite set of distinct points on $\Sigma$. Consider a smooth unitary connection $A$ on $E|_{\Sigma \setminus Z}$ and a smooth field $\phi = \vp + \vp^{\da} \in \Omega^1(i\su(E))|_{\Sigma \setminus Z}$, where $\vp = \phi^{(1,0)}$ and $\vp^{\da}$ is the Hermitian adjoint of $\vp$ with respect to $H_0$. The \emph{decoupled Hitchin equations} on $\Sigma \setminus Z$ are given by:
\begin{equation} \label{eq_decoupled_Hitchin_equation}
	\begin{split}
		F_A = 0\ ,\quad [\vp, \vp^{\da}] = 0\ ,\quad \bar{\pa}_A \vp = 0\ .
	\end{split}
\end{equation}
The set $Z$ is called the \emph{singular set} of the limiting configuration, and we denote its complement by $\Sigma^{\circ} = \Sigma \setminus Z$. Since general solutions to \eqref{eq_decoupled_Hitchin_equation} may exhibit singular behavior near $Z$, we impose the following additional condition:

\begin{definition}
	A solution $(A, \phi)$ to the decoupled Hitchin equations on $\Sigma^{\circ}$ is called a \emph{limiting configuration} if $\phi \neq 0$, and $|\phi|$ extends to a continuous function on $\Sigma$ with $|\phi|^{-1}(0) = Z$.
\end{definition}

This extra condition on the continuity of $|\phi|$ ensures that $\det(\vp)$ extends to a holomorphic quadratic differential $q = \det(\vp)$ on $\Sigma$ such that $Z = q^{-1}(0)$. Taubes \cite{Taubes20133manifoldcompactness} showed that any solution to the decoupled Hitchin equations arising as the limit of a sequence of solutions to the full Hitchin equations is a limiting configuration.

Two limiting configurations $(A_1, \phi_1 = \vp_1 + \vp_1^{\da})$ and $(A_2, \phi_2 = \vp_2 + \vp_2^{\da})$ are said to be \emph{gauge equivalent} if $\det(\vp_1) = \det(\vp_2)$ on $\Sigma$, and there exists a smooth unitary gauge transformation $g \in \Gamma(\Sigma^{\circ}, \SU(E))$ such that $(A_1, \phi_1)\cdot g = (A_2, \phi_2)$ on $\Sigma^{\circ}$.

We define the moduli space of limiting configurations as
\[
\MMH^{\LC} = \left\{ \text{limiting configurations of } \eqref{eq_decoupled_Hitchin_equation} \right\} / \sim,
\]
equipped with a Hitchin map
\[
\msHi: \MMH^{\LC} \to \MB = H^0(K^2),
\]
and we aim to understand the fibers $\msHi^{-1}(q)$ for each $q \in H^0(K^2)$.

Note that the smooth vector bundle $E$ with Hermitian metric $H_0$ is defined over the entire surface $\Sigma$. Given a solution $(A, \phi)$ to \eqref{eq_decoupled_Hitchin_equation}, we obtain a Higgs bundle $(\ME, \vp)$ that is defined only on $\Sigma^{\circ}$. The Hermitian metric $H_0$ can be used to determine a holomorphic extension of $\ME$ to all of $\Sigma$, following the method in \cite[Theorem~2]{simpson1990harmonic}. Our situation is a special case of a tame harmonic bundle, where the analysis simplifies significantly because $|\phi|$ is bounded.

We define a filtered structure as follows: for each real number $\al$, let $r$ denote the distance to $Z$ on $\Sigma$. We define the sheaves
\[
\ME_{\al} := \left\{ s \in \Gamma(\ME) \,\middle|\, |s|_{H_0} \leq C r^{\al - \ep} \text{ for all } \ep > 0 \right\}.
\]
Since $|\vp|$ is bounded, $\vp$ also induces a map $\vp_{\al}: \ME_{\al} \to \ME_{\al} \otimes K.$ The Cornalba–Griffiths condition, see \cite[Proposition~3.1]{simpson1990harmonic}, is automatically satisfied in our setting, so each $\ME_{\al}$ is coherent.

Thus, we obtain a filtered Higgs bundle $(\ME_{\al}, \vp_{\al})$. At the level $\al = 0$, we canonically associate a Higgs bundle $(\ME_0, \vp_0)$ defined over the entire Riemann surface $\Sigma$. For notational convenience, we sometimes drop the subscript and refer to $(\ME, \vp)$ as the canonically associated Higgs bundle of the limiting configuration.

\subsection{The construction of the limiting configuration metric}\label{subsec:lim_config}

In this subsection, we introduce the construction of the limiting configuration and the convergence theory developed in \cite{mazzeo2012limiting, Mochizukiasymptotic}; see also \cite{he2023algebraiccompactification} for further details.

\begin{comment}
Let $q$ be a fixed irreducible quadratic differential with spectral curve $S$, and let $n: \tS_q \to S$ denote its normalization. Define $\widetilde{K} := \tilde{\pi}^{\ast}K$ (note that $\widetilde{K} \neq K_{\widetilde{S}_q}$), and let $\tilde{q} := \tilde{\pi}^{\ast}q \in H^0(\widetilde{K}^2),$ where $\tilde{\pi}$ is as in \eqref{eq_normalization}. Choose a square root $\omega \in H^0(\widetilde{K})$ such that $\tilde{q} = -\omega \otimes \omega$, i.e., $\omega = p^\ast \lambda$. Let $\Lambda := \mathrm{Div}(\omega)$ and $\tZ := \mathrm{supp}(\Lambda)$. If $\sigma: \tS_q \to \tS_q$ denotes the involution, then $\sigma^{\ast} \omega = -\omega$.
\end{comment}

Now, we recall the Mochizuki's construction from \cite{Mochizukiasymptotic}. Let $(\mathcal{E}, \varphi)$ be a Higgs bundle on $\Sigma$ with $\det \varphi = q$. Consider the pullback
$(\tME, \tvp) := (\tilde{\pi}^{\ast} \mathcal{E}, \tilde{\pi}^{\ast} \varphi)$ to $\widetilde{S}_q$. % Then $\tvp \in H^0(\mathrm{End}(\tME) \otimes \widetilde{K})$, and $\tilde{q} = \det(\tvp)=\tom\otimes \tom$ with $\tom\in H^0(\tS_q,K_{\tS_q})$.
Let $\wt{\calL}_0=\mathsf{Ab}(\ME,\varphi)\in \calP_V$ and define the $\sigma$-invariant Higgs bundle on $\wt{S}_q$
\[
\tME_0 = \wt{\calL}_0 \oplus \sigma^{\ast} \wt{\calL}_0, \quad \tvp_0 = \mathrm{diag}(\tilde{\omega}, -\tilde{\omega}).
\]
% satisfying $\sigma^{\ast}(\tME_0, \tvp_0) = (\tME_0, \tvp_0)$.
There is a natural inclusion $\iota: (\tME, \tvp) \to (\tME_0, \tvp_0)$ defined as the dual of the inclusion of the eigen line bundles \eqref{equ:eigenline_bundle}. It is an isomorphism over $\tS_q \setminus \tZ$.  Recall that for any real coefficient divisor $V = \sum_{x \in Z} v_x x$, there is a canonical weight function
\[
\chi_V(x) :=
\begin{cases}
	v_x, & x \in \supp V\ ;\\
	0,   & x \notin \supp V\ .
\end{cases}
\]

A line bundle $\ML$ together with a weight function $\chi_V$ defines a parabolic line bundle $(\ML, \chi_V)$. We denote by $\MF_*(\ML, \chi_V)$ the associated filtered line bundle; see \cite[Section 3]{he2023algebraiccompactification} for an explanation.

 \subsection{Mochizuki's convergence theorem}

We recall Mochizuki’s construction of the limiting configuration metric \cite[Sections 4.2.1 and 4.3.2]{Mochizukiasymptotic} and the convergence theorem.

Let $V$ be the Higgs-divisor of $(\ME, \vp)$.

\begin{definition}
	Let $\msV$ be a real coefficient divisor on $\tS_q$ with $\msV \leq \Lambda$. The divisor $\msV$ and its weight function $\chi_{\msV}$ are said to be \emph{compatible} if
	\[
	\msV + \sigma^* \msV = V - \Lambda.
	\]
	The weight function $\chi_{\msV}$ is called \emph{canonical} if $\msV = \frac{1}{2}(V - \Lambda)$.
\end{definition}

We now explain how to obtain a limiting configuration from a canonical weight function, as introduced in \cite[Sections 4.2.1 and 4.3.2]{Mochizukiasymptotic}. Given a compatible weight function $\chi_{\msV}$, we define the filtered bundle
\[
\MF_{*}(\wt{\calL}_0, \chi_{\msV}),
\]
and let $\tlh$ be the harmonic metric defined by the associated parabolic structure. This metric is well-defined up to a positive multiplicative constant. To fix this constant, we assume that
\[
\sigma^{\ast} \tlh \otimes \tlh = 1,
\]
which uniquely determines $\tlh$. We then define the metric $\tH = \mathrm{diag}(\tlh, \sigma^{\ast} \tlh)$ on $\tME_0$, satisfying $\det(\tH) = 1$.

For the resulting harmonic bundle $(\tME_0, \vp_0, \tH)$, let $\widetilde{\nabla}$ be the unitary connection determined by $\tH$. Since $\tH$ is diagonal, it follows that over $\tS_q \setminus \tZ$, we have $F_{\widetilde{\nabla}} = 0$ and $[\vp_0, {\vp_0}_{\tH}^{\da}] = 0$. Furthermore, as the inclusion $\iota: (\tME, \tvp) \to (\tME_0, \tvp_0)$ is an isomorphism over $\tS_q \setminus \tZ$, the metric $\tH$ also defines a metric on $(\tME, \tvp)$ over $\tS_q \setminus \tZ$.

For any $\tx \in \tS_q \setminus \tZ$ with $x := p(\tx)$, we have natural isomorphisms
\[
(\tME_0, \tvp_0)|_{\sigma(\tx)} \cong
(\tME_0, \tvp_0)|_{\tx} \cong
(\tME, \tvp)|_{\tx} \cong
(\ME, \vp)|_x\ .
\]
Therefore, $\tH$ induces a metric $H^{\LC}$ on $\Sigma \setminus Z$, which may be regarded as the pushforward of $\tlh$. In \cite[Theorem 5.2]{horn2022sltypesingularfibers}, this pushforward metric is written explicitly in local coordinates.

Thus, starting from any Higgs bundle $(\ME, \vp)$ and a compatible weight function, we obtain a limiting configuration metric $H^{\Lim}$, which defines
\[
(\na^{\LC}, \phi^{\LC} = \vp + \vp^{\da}_{H^{\Lim}}) \in \MMH^{\LC}\ .
\]

When $q$ has only odd zeros, we have $\sigma^* \msV = \msV$, and a weight function $\chi_{\msV}$ is compatible if and only if $\msV = \frac{1}{2}(V - \Lambda)$. In this case, $\chi_{\msV}$ is uniquely given by the canonical weight function.

When $q$ has even zeros and $\chi_{\msV}$ is a compatible weight function, then for any pair of even zeros $\tp^\pm$ satisfying $\sigma(\tp^+) = \tp^-$, the condition $\msV + \sigma^* \msV = V - \Lambda$ allows for a one-parameter family of freedom over each such pair. Hence, for a fixed pair $(q, V)$, the set of compatible weight functions is parameterized by $\mbR^{r_{\even}}$.

In summary, we conclude the following:

\begin{proposition}
	\label{prop_extra_dimension}
	For any Higgs bundle $(\ME, \vp) \in \MM_{q, V}$, there exists an $r_{\even}$-dimensional family of limiting configurations parameterized by compatible weight functions.
\end{proposition}

Define
\[
\MF_{*, \can} := \MF_{*} \left( \wt{\calL}_0, \chi_{\frac{1}{2}(V - \Lambda)} \right),
\]
which is the canonical filtered bundle. We denote by $(\na_{\can}^{\LC}, \phi_{\can}^{\LC})$ the corresponding limiting configuration, referred to as the \emph{canonical limiting configuration}.

We define the \emph{analytic Mochizuki map} $\UpMoc$ as
\begin{equation} \label{eq_analytic_moc_irreducible}
	\UpMoc: \MM_q \longrightarrow \MMH^{\LC}, \quad
	[(\ME, \vp)] \mapsto [(\na_{\can}^{\LC}, \phi_{\can}^{\LC})].
\end{equation}

The following theorem of Mochizuki and Mochizuki-Szab\'o explains why the weight $\chi_{\frac{1}{2}(V - \Lambda)}$ is canonical:

\begin{theorem}[{\cite{Mochizukiasymptotic}, \cite[Theorem 1.7]{mochizuki2023asymptotic}}]
	\label{thm_moc_convergence_irreducible_case}
	Let $(\ME, t\vp)$ be a family of stable Higgs bundles with scaling parameter $t \to \infty$. Then this family has a unique limiting configuration $\UpMoc(\ME, \vp)$ such that for any compact set $K \subset \Sigma \setminus Z$,
	\[
	\lim_{t \to \infty} \left\| (\na_t, \phi_t) - \UpMoc(\ME, \vp) \right\|_{\MC^l(K)} = 0.
	\]
	Moreover, if $(\ME, \vp)\in \MT_{V_{\max}}$, then there exist positive constants $C_{l,K}$ and $C'_{l,K}$, independent of $t$, such that
	\[
	\left\| (\na_t, \phi_t) - \UpMoc(\ME, \vp) \right\|_{\MC^l(K)} \leq C_{l,K} e^{-C'_{l,K} t},
	\]
	where $\MC^l$ denotes the $l$-th derivative continuous norm.
\end{theorem}

\subsection{The deformation of the limiting configurations}

The deformation theory of the moduli space of limiting configurations over smooth fibers was studied in \cite[Section 3]{mazzeo2016ends}. In this subsection, we extend those results to study deformations over singular fibers.

Using the filtered structure, for any solution $(A, \phi)$ to the decoupled Hitchin equations, we denote by $(\ME, \vp)$ the canonically associated Higgs bundle. We denote the zero divisor of the quadratic differential by $\Div(q) = \sum_{x \in Z} m_x x$ and the Higgs divisor associated to $(\calE,\varphi)$ by $V = \sum_{x \in Z} v_x x$. Then, by Theorem \ref{thm:LimConfig}, near any $x \in Z$, the Higgs bundle has the following canonical local form:
\begin{equation}
	\label{eq_canonicalform_Higgsbundle}
	\vp = \begin{pmatrix}
		0 & z^{v_x} \\
		z^{m_x - v_x} & 0
	\end{pmatrix} dz.
\end{equation}

We now state a generalization of \cite[Lemma 3.1]{mazzeo2016ends}:

\begin{lemma}
	\label{lemma_normalform_Higgsbundle}
	Let $q \in \MB_\Sigma$, and let $(\ME_1, \vp_1)$ and $(\ME_2, \vp_2)$ be two canonically associated Higgs bundles lying in the same stratum defined by $D$. Then there exists a unitary transformation $g$ on $\Sigma^{\circ}:=\Sigma\backslash Z(q)$ such that $\vp_1^g = \vp_2$.
\end{lemma}

\begin{proof}
	Since $\Sigma^{\circ}$ is homotopy equivalent to a union of circles, any complex vector bundle over $\Sigma^{\circ}$ is topologically trivial, and any bundle with connected fiber admits a global section over $\Sigma^{\circ}$. Over $\Sigma^{\circ}$, each $\vp_i$ can be viewed as a map $\vp_i: \Sigma^{\circ} \to \mathfrak{sl}_2(\CC)$.

	By the local normal form \eqref{eq_canonicalform_Higgsbundle}, for any pair $\vp_1, \vp_2$, there exists a local unitary gauge transformation $g$ such that $g^{-1} \vp_1 g = \vp_2$.

	Consider the space
	\[
	\MC_{\vp_1,\vp_2} = \left\{ (p, g_p) \in \Sigma^{\circ} \times SU(2) \mid g_p^{-1} \vp_1 g_p = \vp_2 \right\} \to \Sigma^{\circ},
	\]
	which is a smooth fiber bundle with fiber $S^1$. Since $S^1$ is connected, $\MC_{\vp_1,\vp_2}$ admits a global section over $\Sigma^{\circ}$.
\end{proof}

Therefore, we may always assume a canonical form for the Higgs field in a limiting configuration and focus only on the metric and connection, analogous to the approach in \cite{mazzeo2016ends}.

Given $q$ and a stratum $V$, we denote by $\vp = \vp_{q, V}$ the canonical form of the Higgs field over $\Sigma^{\circ}$. The infinitesimal complex stabilizer of $\vp$ is the holomorphic line bundle
\[
L^{\mbC}_{q, V} := \left\{ \gamma \in \mathfrak{sl}(E) \,\middle|\, [\gamma, \vp] = 0 \right\}.
\]
By Lemma \ref{lemma_normalform_Higgsbundle}, the line bundle $L^{\mbC}_{q,V}$ depends only on the quadratic differential $q$ and the Higgs divisor $V$. Moreover, define
\[
L_{q,V} := L^{\mbC}_{q,V} \cap \su(E), \quad \text{and} \quad iL_{q,V}
\]
to be the real and imaginary parts, corresponding to the skew-Hermitian and Hermitian elements, respectively. By the Jacobi identity, $L_{q,V}$ is closed under the Lie bracket $[\cdot, \cdot]$. We emphasize that these line bundles are defined over $\Sigma^{\circ}$.

Now we state the following simple but useful lemma from \cite[Lemma 4.4]{mazzeo2016ends}:

\begin{lemma}{\cite[Lemma 4.4]{mazzeo2016ends}}
	Over $\Sigma^{\circ}$, if $A$ is a unitary connection such that $\bar{\pa}_A \vp = 0$, then
	\[
	F_A^{\perp} := F_A - \tfrac{1}{2} \Tr(F_A)\Id \in \Omega^2(L_{q,V}).
	\]
\end{lemma}

\begin{proof}
	The proof is identical to that of \cite[Lemma 4.4]{mazzeo2016ends}, since over $\Sigma^{\circ}$ the Higgs bundle can always be written in local coordinates as $\vp = \mathrm{diag}(\lambda, -\lambda)$.
\end{proof}

We now consider the deformation of limiting configurations. Let $(A_0, \phi_0 = \vp_0 + \vp_0^{\da})$ and $(A_1, \phi_1 = \vp_1 + \vp_1^{\da})$ be two solutions to \eqref{eq_decoupled_Hitchin_equation}. Then, by Lemma \ref{lemma_normalform_Higgsbundle}, up to a gauge transformation $g \in \Gamma(\Sigma^{\circ}, \SU(E))$, we may assume $\vp := \vp_0 = \vp_1$.

Write $A_1 = A_0 + \alpha$ with $\alpha \in \Omega^1(\su(E))$. Since $\bar{\pa}_{A_0} \vp = \bar{\pa}_{A_1} \vp = 0$, we obtain $[\alpha, \vp] = 0$. By \cite[Lemma 4.9]{mazzeo2016ends}, this implies $\alpha \in \Omega^1(L_{q,V})$, i.e., $\alpha$ takes values in a line bundle, which further implies $[\alpha, \alpha] = 0$.

Moreover, since both $A_0$ and $A_1$ satisfy the decoupled Hitchin equations, we have
\begin{equation}
	[\alpha \wedge \vp] = 0, \quad d_{A_0} \alpha = 0.
\end{equation}
As $F_{A_0} = 0$, the operator $d_{A_0}$ defines a flat connection on $L_{q,V}$, and therefore $\alpha$ determines a cohomology class in
\[
H^1(\Sigma^{\circ}; L_{q,V}).
\]

We define the ungauged vertical deformation space at $(A_0, \phi_0)$ as
\begin{equation}
	Z^1(\Sigma^{\circ}; L_{q,V}) := \left\{ \alpha \in \Omega^1(\Sigma^{\circ}, L_{q,V}) \,\middle|\, d_{A_0} \alpha = 0 \right\}.
\end{equation}

Let $\Stab_{\phi_0}$ denote the group of unitary gauge transformations on $\Sigma^{\circ}$ that stabilize $\phi_0$. For $g \in \Stab_{\phi_0}$, if $g = \exp(\gamma)$ for some $\gamma \in \Omega^0(\Sigma^{\circ}, L_{q,V})$, then the action of $g$ on $A_1 = A_0 + \alpha$ is given by
\[
  A_1\cdot g = g^{-1} d_{A_0} g + g^{-1} \alpha g = \alpha + d_{A_0} \gamma.
\]
We define
\begin{equation}
	B^1(\Sigma^{\circ}; L_{q,V}) := \left\{ d_{A_0} \gamma \,\middle|\, \gamma \in \Omega^0(\Sigma^{\circ}, L_{q,V}) \right\}.
\end{equation}
Hence, the infinitesimal vertical deformation space is
\begin{equation}
	H^1(\Sigma^{\circ}; L_{q,V}) := Z^1(\Sigma^{\circ}; L_{q,V}) / B^1(\Sigma^{\circ}; L_{q,V}).
\end{equation}

As $g=\exp(\gamma)$, the remaining elements in $\Stab_{\phi_0}$ form an integral lattice $H^1_{\mbZ}(\Sigma^{\circ},L_{q,V})$ under the exponential map. Therefore, we conclude:

\begin{proposition}
	The moduli space of limiting configurations for fixed $q$ and Higgs divisor $V$ can be identified with
	\[
	\msH_V^{-1}(q) = H^1(\Sigma^{\circ}; L_{q,V}) / H^1_{\mbZ}(\Sigma^{\circ}; L_{q,V}).
	\]
\end{proposition}

\begin{proposition}
	Let $(\ME, \varphi) \in \calM$ be a Higgs bundle with Higgs divisor $V$. Then
	\[
	L^{\mbC}_{q,V} = K^{-1}(V).
	\]
	Moreover, if $k$ denotes the number of zeros of $q$ and $q$ is irreducible, then
	\begin{equation}
		\dim_{\mbR} H^1(\Sigma^{\circ}; L_{q,V}) = k + 2g - 2.
	\end{equation}
\end{proposition}

\begin{proof}
	By definition,
	\[
	L^{\mbC}_{q,V} = \ker\left([\varphi, \cdot]: \slf(E) \to \slf(E) \otimes K \right).
	\]
	For generic $x \in \Sigma \setminus Z$, the Higgs field is regular semisimple, and the adjoint action $[\vp,\cdot]$ has one-dimensional kernel. Hence, $L^{\mbC}_{q,V}$ is a line bundle.
	Let $s_V$ be the canonical section of $\MO(V)$. Then the image of the map $\frac{\varphi}{s_V}: K^{-1}(V) \to \slf(E)$
	lies in the kernel. Conversely, if a section of $\slf(E)$ commutes with $\varphi$, it defines a $\tilde{\pi}_* \calO_{\tS_q}$-module homomorphism of $E$. We have $\tilde{\pi}_* \calO_{\tS_q} = \calO_\Sigma \oplus K^{-1}(V),$
	where the splitting corresponds to the symmetric and anti-symmetric parts under the involution $\sigma$. Thus,
	\[
	\tilde{\pi}_* \calO_{\tS_q} \cap \slf(E) = K^{-1}(V) = L^{\mbC}_{q,V}.
	\]
	For the dimension counting, we apply \cite[Lemma 4.10]{mazzeo2016ends}. Since $L_{q,V}$ is a real line bundle, its Euler characteristic is given by
	\begin{equation}\label{eq:Euler_char_LqV}
		\chi(\Sigma^{\circ}; L_{q,V}) = \chi(\Sigma^{\circ}) = 2 - 2g - k.
	\end{equation}
	As $q$ is irreducible, there are no parallel sections of $L_{q,V}$, so $H^0(\Sigma^{\circ}; L_{q,V}) = 0$.	Let $B_{\epsilon}$ be a sufficiently small tubular neighborhood of $Z$, and set $M := \Sigma \setminus B_{\epsilon}$. Then
	\[
	H^2(\Sigma^{\circ}; L_{q,V}) = H^2(M; L_{q,V}) \cong H^0(M, \partial M; L_{q,V}) = 0,
	\]
	since $L_{q,V}$ admits no parallel sections. Therefore, by \eqref{eq:Euler_char_LqV},
	\[
	\dim_{\mbR} H^1(\Sigma^{\circ}; L_{q,V}) = k + 2g - 2.
	\]
\end{proof}

We now explain the reason for the extra dimension in the space of limiting configurations compared to the moduli space of Higgs bundles. Let $r_{\even}$ and $r_{\odd}$ denote the number of even and odd zeros of the quadratic differential $q$, respectively. Then we have
\[
\dim_{\mbR} H^1(\Sigma^{\circ}; L_{q,V}) = r_{\even} + r_{\odd} + 2g - 2.
\]
On the other hand, let $V$ be the Higgs divisor. Then the dimension of the corresponding Prym variety is
\[
\dim_{\mbR} \calP_V= 2g - 2 + r_{\odd}.
\]
Hence, we find that
\[
\dim_{\mbR} H^1(\Sigma^{\circ}; L_{q,V}) - \dim_{\mbR} \calP_V = r_{\even}.
\]
These additional $r_{\even}$ real dimensions are precisely the degrees of freedom arising from the choice of weight at each even zero, as described in Proposition \ref{prop_extra_dimension}.

\subsection{The canonical form of a Higgs bundle}
Using the notation $\Lambda$ and $V$ from earlier, we describe the canonical local form of a Higgs bundle in various coordinate frames.

\begin{theorem}[{\cite[Theorem 5.2]{horn2022sltypesingularfibers}, \cite[Section 4.3.2]{Mochizukiasymptotic}}]
	\label{thm:LimConfig}
	Let $(\ME, \varphi)$ be an $\mathrm{SL}_2(\mathbb{C})$ Higgs bundle with irreducible spectral curve, and let $q = \det(\vp)$. Write $\mathsf{div}(q) = \sum_{p \in Z} m_p p$, and let $V = \sum_{p \in Z} v_p p$ be the Higgs divisor of $\vp$, $u$ be the u-coordinate. For each $p \in \Lambda$, there exists a neighborhood $U_p$ with local coordinate $z$ and a holomorphic frame of $\ME|_{U_p}$ such that the Higgs field takes the form
	\begin{equation} \label{equ:localform_Higgs}
		\varphi = \begin{pmatrix}
			0 & z^{v_p} \\
			z^{m_p - v_p} & 0
		\end{pmatrix}.
	\end{equation}

	Moreover, for the limiting configuration metric, let $n_p := m_p - 2v_p$. Then:
	\begin{itemize}
		\item[(i)] If $m_p$ is odd, then the Hermitian metric for the limiting configuration in this coordinate is
		\begin{equation}
			\label{eq_localform_limitingconfiguration_odd_zero}
			H^{\LC} = \begin{pmatrix}
				g_1 |z|^{\frac{n_p}{2}} & g_2 z^{\frac{1 - n_p}{2}} |z|^{\frac{n_p}{2}} \\
				\bar{g}_2 \bar{z}^{\frac{1 - n_p}{2}} |z|^{\frac{n_p}{2}} & g_1 |z|^{-\frac{n_p}{2}}
			\end{pmatrix},
		\end{equation}
		where $g_1$ is a smooth, positive real function and $g_2$ is a complex function satisfying $g_1^2 - |g_2|^2 |z| = 1$. Moreover, $g_2=0$ if and only if $u=0$.

		\item[(ii)] If $m_p$ is even, then the limiting configuration metric is
		\begin{equation}
			\label{eq_localform_limitingconfiguration_even_zero}
			H^{\LC} = \begin{pmatrix}
				g_1 |z|^{\frac{n_p}{2}} & g_2 z^{-\frac{n_p}{2}} |z|^{\frac{n_p}{2}} \\
				\bar{g}_2 \bar{z}^{-\frac{n_p}{2}} |z|^{\frac{n_p}{2}} & g_1 |z|^{-\frac{n_p}{2}}
			\end{pmatrix},
		\end{equation}
		where $g_1, g_2$ are smooth, positive real functions satisfying $g_1^2 - g_2^2 = 1$. Moreover, $g_2=0$ if and only if $u_\pm=0$.
	\end{itemize}
\end{theorem}

\begin{proof}
	This theorem follows from the description of Higgs bundles in singular Hitchin fibers using the $u$-coordinate, as described in Section \ref{subsec:stratification}. We choose the parabolic structure on the line bundle $\calL \in \calP_V$ with canonical weight $\chi_{\frac{1}{2}(\Lambda - V)}$. Choosing frames adapted to the parabolic structure near $Z(\omega)$ uniquely determines a $u$-coordinate. The functions $g_1, g_2$ are then determined through this $u$-coordinate. For odd-order zeros of $q$, the condition $u = 0$ corresponds to $g_1 = 1$, $g_2 = 0$. Similarly, for even-order zeros, $u_{\pm} = 0$ gives $g_1 = 1$, $g_2 = 0$.
\end{proof}

The advantage of this description, compared to the original one in \cite{Mochizukiasymptotic}, is that the canonical form of the Higgs bundle is expressed directly over the original Riemann surface, rather than being pulled back to the normalization of the spectral curve. However, the trade-off is that the Hermitian metric is no longer diagonal.

\begin{definition}
	\label{def_admissiblel_Higgs_bundle}
	A Higgs bundle $(\ME, \vp)$ is called \emph{locally fiducial} if in the limiting configuration metric given in the canonical coordinates \eqref{eq_localform_limitingconfiguration_even_zero} and \eqref{eq_localform_limitingconfiguration_odd_zero}, we have $g_1 = 1$.
\end{definition}

According to Section \ref{subsec:stratification}, in each stratum, the holomorphic map \( \mathsf{Ab} : \MT_V \to \MP_V \) has fibers of the form
\[
\mathsf{Ab}^{-1}(\calL) \cong (\mathbb{C}^\ast)^{k_1} \times \mathbb{C}^{k_2}.
\]
This space parametrizes the Hecke transformations. A Higgs bundle is locally fiducial if and only if all the Hecke parameters vanish in the $u$-coordinates. In particular, each stratum contains locally fiducial Higgs bundles.

When \( V = V_{\max} \), the map \( \mathsf{Ab} \) is a bijection, and every Higgs bundle \( (\ME, \vp) \in \MT_{V_{\max}} \) is locally fiducial. By Corollary \ref{coro:max_stratum}, these bundles satisfy $(\ME, \vp) = \tpi_*(\calL, \tilde{\lambda})$ for some line bundle \( \calL \) on the normalization \( \tS_q \). In particular, the Higgs bundles studied by Mochizuki–Szabó \cite{mochizuki2023asymptotic} in Theorem \ref{thm_moc_convergence_irreducible_case} are special cases of locally fiducial Higgs bundles.

We also note that for \( V \neq V_{\max} \), although \( (\ME, \vp) \) may not arise as the pushforward of a line bundle on the normalization, it is related to such a pushforward by a Hecke transformation. Moreover, the Hecke transformation preserves the limiting configuration. More precisely, let \( (\ME_1, \vp_1) \) and \( (\ME_2, \vp_2) \) be two Higgs bundles, and let \( (\nabla_i^{\Lim}, \phi_i^{\Lim}) \) denote their canonical limiting configurations. If
\[
\mathsf{Ab}(\ME_1, \vp_1) = \mathsf{Ab}(\ME_2, \vp_2),
\]
then the limiting configurations agree:
\[
(\nabla_1^{\Lim}, \phi_1^{\Lim}) = (\nabla_2^{\Lim}, \phi_2^{\Lim}).
\]
We refer to \cite[Section 6]{he2023algebraiccompactification} for further discussions.

\begin{lemma}
	Let $(\calE,\vp) \in \calM$, such that $\det(\vp)\neq 0$ is a quadratic differential with no global square-root. Let $H^{\LC}$ denote the limiting configuration of Theorem \ref{thm:LimConfig}. Then $(\calE,\vp)$ is locally fiducial if and only if for each small enough neighborhood $U \subset \Sigma$ of a zero $p \in Z(q)$, there exists a symmetric bilinear pairing $C_U$ on $\calE$ compatible with $H^{\LC}$, such that $\vp$ is symmetric with respect $C_U$.
\end{lemma}
\begin{proof}
	Let $C_U$ denote a symmetric bilinear pairing on $\calE_U$. With respect to a local trivialization we can represent $C_U$ by a symmetric matrix $C$ with coefficients in $\calO_U$ and $H^{\LC}$ by a Hermitian, positive definite matrix $H$ with coefficients in $C^\infty_U$. Then the compatibility condition of local orthogonal and unitary structure is $\overline{H}^{-1}C=CH$. We can choose a frame such that $H=\diag(\vert z \vert^k , \vert z \vert^{-k})$. If $k>0$, this implies $C$ has zero diagonal. Now, using the local form of $\vp$ from equation \eqref{equ:phi_u-coord_odd} and \eqref{equ:phi_u-coord_even} we conclude that $\vp$ is symmetric with respect to $C$, i.e.\ $\vp^{\top}C=C\vp$, if and only if the $u$-coordinate is zero. If $k=m_p-2v_p=0$ the existence of a compatible $C_U$ is clear and the local fiduciality condition at $p$ is vacuous (no Hecke parameters at $p$).
\end{proof}

\section{Exponential convergence to the limiting configuration}
Let $(\ME,\vp)$ be a Higgs bundle, $(A_t,\phi_t)$ be the corresponding flat connection for $(\ME,t\vp)$, and $(A_{\infty},\phi_{\infty})$ be the limiting configuration. In this section, we will extend the exponential convergence result of Mochizuki-Szabó \cite{mochizuki2023asymptotic} to the case when $(\ME,\vp)$ is a locally fiducial Higgs bundle. In particular, we establish the following result.

\begin{theorem}\label{thm:exp_conv}
	Suppose $(\ME,\vp)$ is a locally fiducial Higgs bundle and $q = \det(\vp)$ has at least one zero of odd order, then over any compact set $K\subset \Sigma\setminus Z$, there exist constants $C_{l,K},C'_{l,K}$ depending on $l$ and $K$ but independent of $t$ such that
	\begin{equation}
		\|(A_t,\phi_t)-(A_{\infty},\phi_{\infty})\|_{\MC^{l}(K)}\leq C_{l,K}e^{-C'_{l,K}t}.
	\end{equation}
\end{theorem}
Here the $\MC^l$-norm is taken with respect to a fixed Hermitian metric $H_0$ on $\ME$ and a fixed Kähler metric $g_{\Sigma}$ on $\Sigma$. Throughout this section, we assume that $q$ has at least one odd zero, and let $C,\delta,C_{l,K},C'_{l,K}$ be constants independent of $t$ but may vary from line to line.

\subsection{Model fiducial solutions}\label{subsec:ModelSolution}
By Theorem \ref{thm:LimConfig}, we have a canonical local form for every locally fiducial Higgs bundle and the corresponding limiting configuration. In this subsection, we will consider the model fiducial solutions, analogous to \cite{mazzeo2012limiting}.

Over $\mbC$ with coordinate $z=x+iy=re^{i\theta}$, we consider the Higgs bundle
\begin{equation}
	\ME=\MO\oplus \MO,\;\vp=\begin{pmatrix}
		0 & z^d\\
		z^{m-d} & 0
	\end{pmatrix},
\end{equation}
which can be extended to be a wild Higgs bundle over $\mathbb{P}^1$. Moreover, let $H$ be the Hermitian metric which solves the Hitchin equation. Following \cite[Corollary 2.11]{collier2014asymptotics}, we may assume $H=\diag(h,h^{-1})$, where $h$ is a Hermitian metric on the trivial bundle $\MO$, which is a positive function.

The Hitchin equation for the harmonic metric $H_t$ of the Higgs bundle $(\ME,t\vp)$ can be written as
\begin{equation}
	\Lam \bar{\pa}(H_t^{-1}\pa H_t)+t^2[\vp,H_t^{-1}\vp^{\dagger}H_t]=0.
\end{equation}
We may write $H_t=\diag(e^{u_t},e^{-u_t})$, then the equation becomes
\begin{equation}
	\Delta u_t-t^2(r^{2d}e^{2u_t}-r^{2m-2d}e^{-2u_t})=0,
\end{equation}
where $\Delta=\frac14 (\pa_x^2+\pa_y^2)$. We will assume that $u_t$ is a function of $r$.

Let \( u_{\infty} = \frac{1}{2}(m - 2d) \log r \) and \( v_t = u_t - u_{\infty} \). Then, \( v_t(r) \) satisfies the following equation:
\begin{equation}\label{eqn:vtODE}
	v_t''+r^{-1}v_t'=8t^2r^m\sinh(2v_t).
\end{equation}
Define \( \rho = \frac{4t r^{\frac{m}{2} + 1}}{1 + \frac{m}{2}} \). We can then write \( v_t(r) = \psi(\rho) \), where \( \psi(\rho) \) satisfies the following Painlevé type III equation:
\[
\psi'' + \frac{\psi'}{\rho} = \frac{1}{2} \sinh(2\psi),
\]
which has been precisely discussed in \cite[Section 3.2]{mazzeo2012limiting} and \cite{fredrickson2018generic}.

This ODE has a unique solution with the following asymptotics:
\[
\psi(\rho) \sim \frac{1}{\pi} K_0(\rho)\sim \rho^{-\frac12}e^{-\rho} \quad \text{as} \quad \rho \to \infty, \quad \psi(\rho) \sim -\frac{m - 2d}{m + 2} \log \rho \quad \text{as} \quad \rho \to 0,
\]
where $K_0(\rho)$ is the Macdonald function of order $0$ and \( \psi(\rho) \) is monotonically decreasing and strictly positive.

From this we can conclude
\begin{lemma}
	\label{lem:Painlev_model}
	Let $K\subset \mbC\setminus \{0\}$ be a compact set. Then $\|u_t-u_{\infty}\|_{\MC^l(K)}\LS e^{-C_{K,l}t}$, where $C_{K,l}$ is a constant depending only on $K$ and $l$.
\end{lemma}
\begin{corollary}
	For each $t\in (0,\infty)$, let $H_t$ be the harmonic metric of the model Higgs bundle $(\ME,t\vp)$, then $H_t$ is rotationally symmetric. Moreover, over any compact set $K\subset \mbC\setminus \{0\}$, $H_t$ converges smoothly exponentially to $H_{\infty}=\mathrm{diag}(r^{\frac12(m-2d)}, r^{-\frac12(m-2d)})$ as $t\to \infty$.
\end{corollary}

We may express the model solutions in the unitary gauge. Let $H_t=g_t^2$ with $g_t=\diag(h_t^{\frac12},h_t^{-\frac12})$, we write $h_t=r^{\frac12(m-2d)}e^{v_t}$. Then in the unitary gauge, we have $$\phi_t=\phi_{t}^{1,0}dz+\phi_{t}^{0,1}d\bz,\;A_t^{1,0}=g_t^{-1}\pa g_t,$$ and by a straightforward computation, we obtain
\begin{equation}\label{eqn:HiggsUnitary_eq}
	A_t=\Big(\frac{1}{8}(m-2d)+\frac{1}{4}r\pa_rv_t\Big)\begin{pmatrix}
		1 & 0\\
          0 & -1
	\end{pmatrix}\Big(\frac{dz}{z}-\frac{d\bz}{\bz}\Big),\;\phi_t^{1,0}=\begin{pmatrix}
	0 & r^{\frac{m}{2}-d}z^{d}e^{v_t}\\
	r^{-(\frac{m}{2}-d)}z^{m-d}e^{-v_t} & 0
\end{pmatrix}.
\end{equation}
Moreover, when $t\to \infty$, we obtain $(A_t,\phi_t)\to (A_{\infty},\phi_{\infty})$ with
\begin{equation}\label{eqn:InfHiggsUnitary_eq}
	A_{\infty}=\frac{1}{8}(m-2d)\begin{pmatrix}
		1 & 0\\
		0 & -1
	\end{pmatrix}\Big(\frac{dz}{z}-\frac{d\bz}{\bz}\Big),\;\phi_{\infty}^{1,0}=\begin{pmatrix}
		0 & r^{\frac{m}{2}-d}z^{d}\\
		r^{-(\frac{m}{2}-d)}z^{m-d}& 0
	\end{pmatrix},
\end{equation}
and $(A_{\infty},\phi_{\infty})$ is a limiting configuration.

\subsection{Approximate solution}\label{subsec:app_sol}
For $(\ME,\vp,H)$, we write $H=H_0h$, where $H_0$ is a fixed Hermitian metric on $\ME$ and $h$ is a smooth section of $\End(\ME)$ which is positive definite and Hermitian with respect to $H_0$. In other words, $H(u,v)=H_0(h(u),v)$ for any sections $u,v$ of $\ME$. We consider the following map
\begin{equation}
	\bF((\ME,\vp,H)):=h^{\frac12}(F_{A_H}+[\vp,\vp^{\da}_H])h^{-\frac12},
\end{equation}
which is skew-Hermitian with respect to $H_0$.

Now, we will explicitly construct the approximate solution using the local model solutions in the previous subsection. Let $\Sigma_p(\ep)$ be a disk centered at $p\in Z$ with radius $\ep$. By Theorem \ref{thm:LimConfig}, there exists $\ep_0$ such that for all $p\in Z$, there is a local holomorphic coordinate $z=re^{i\theta}$ and a holomorphic frame of $\ME$ over $\Sigma_p(\ep_0)$ where
\begin{equation}
	\ME=\MO\oplus \MO,\;\vp=\begin{pmatrix}
		0 & z^d\\
		z^{m-d} & 0
	\end{pmatrix} dz,\;H_{\infty}=\begin{pmatrix}
	r^{\frac12(m-2d)} & 0\\
	0 & r^{-\frac12(m-2d)}
\end{pmatrix}.
\end{equation}
By the conformal invariance of the Hitchin equation $\bF((\ME,\vp,H))=0$, we may assume that $\ep_0=2$.

Let $\chi$ be a smooth cut-off function such that $\chi=0$ on $\Sigma\setminus \Sigma_p(1)$, $\chi$ depends only on $r$ on $\Sigma_p(1)$, and $\chi=1$ on $\Sigma_p(1/2)$. The approximate Hermitian metric $H_t^{\app}$ is defined by
\[
  H_t^{\app}=\left\{
    \begin{array}{ll}
      \mathrm{diag}( r^{\frac12(m-2d)}e^{\chi v_t}, r^{-\frac12(m-2d)}e^{-\chi v_t})& \text{ on }\Sigma_p(1),\\
      H_{\infty}& \text{ on }\Sigma\setminus \bigcup_{p\in Z}\Sigma_p(1).
    \end{array}
\right.
\]
Note that $H_t^{\app}$ is exactly the model solution on $\Sigma_p(1/2)$, hence $H_t^{\app}$ is smooth.

\begin{proposition}
	Suppose $(\ME,\vp)$ is locally fiducial, then there exists a family of approximate metrics $H_t^{\app}$ such that \begin{itemize}
		\item [(i)]$\supp(\bF(\ME,t\vp,H_t^{\app}))\subset \Sigma_p(1)\setminus \Sigma_p(1/2)$,
		\item [(ii)] Let $K$ be a compact set in $\Sigma\setminus Z$, for $t$ sufficiently large, we have
		\begin{equation}
			\|\bF(\ME,t\vp,H_t^{\app})\|_{\MC^l(K)}\LS e^{-C_{K,l} t}.
		\end{equation}
	\end{itemize}
\end{proposition}
\begin{proof}
	(i) follows directly from the construction, since $H_{t}^{\app}$ satisfies the Hitchin equation over $\Sigma\setminus \Sigma_p(1)$ and $\Sigma_p(1/2)$. For (ii), since $h$ and $F_{H_t^{\app}}+t^2[\vp,\vp^{\da_{H_t^{\app}}}]$ are both diagonal in the above holomorphic frame over $\Sigma_p(1)$, we have
\begin{align*}
	\bF(\ME,t\vp,H_t^{\app})&=F_{H_t^{\app}}+t^2[\vp,\vp^{\da_{H_t^{\app}}}]=\diag(f_t(r),-f_t(r)),\text{ where}\\
	f_t(r)&:=\frac14\Big(\pa_r^2+\frac{1}{r}\pa_r\Big)(\chi v_t)-t^2r^m(e^{2\chi v_t}-e^{-2\chi v_t}).
\end{align*}
	Over a compact set $K\subset \Sigma\setminus Z$, we may assume $r\geq r_0$ and we have $$|t^2r^m(e^{2\chi v_t}-e^{-2\chi v_t})|_{\MC^l(K\cap \Sigma_p(1))}\LS e^{-C_{K,l}t},\;\Big|\frac14\Big(\pa_r^2+\frac{1}{r}\pa_r\Big)(\chi v_t)\Big|_{\MC^l(K\cap \Sigma_p(1))}\LS e^{-C_{K,l}t},$$
	which imply (ii).
\end{proof}

\subsection{Exponential convergence}
\iffalse
We write $H_t=H_t^{\app}s_t$, and define $$\Xi(H):=\sqrt{-1}\Lambda (F_{H}+[\varphi,\varphi^{\dagger_H}]),$$ then by Simpson \cite[Lemma 3.1]{Simpson1988Construction}, we obtain
$$
\Delta \Tr(s_t)=\Tr(\Xi(H_t^{\app}))-|\bar{\pa}(s_t)s_t^{-\frac12}|_{H_t^{\app}}^2-|[t\varphi,s_t]s_t^{-\frac12}|^2_{H_t^{\app}}
$$
and
$$
\Delta \log\Tr(s_t)\leq 2|\Xi(H_t^{\app})|_{H_t^{\app}}
$$
By maximal principal and Proposition XXX, $\|s_t\|_{L^{\infty}(\Sigma)}\LS \||\Xi(H_t^{\app})|_{H_t^{\app}}\|_{L^{\infty}}\LS 1$ and $\|s_t\|_{L^{\infty}(K)}\LS e^{-\ep t}$.

By Mochzuki \cite{thm_}, $s_t$ is uniform controled, then we obtain the bound
$\int_X|\bar{\pa}s_t|^2_{H_t^{\app}}$

\begin{theorem}
	If we write $H_t^{\app}=H_{\infty}s_t$, then $\|s_t\|_{\MC^l(K)}\LS e^{-C_{K,l} t}$.
\end{theorem}
\fi

In this subsection, using the methods of \cite{mazzeo2016ends, fredrickson2018generic, fredrickson2022asymptotic}, we show that along a ray of Higgs bundles $(\bar{\partial}_E,t\varphi)$, $t\in[1,\infty)$, the approximate metric $H_t^{\app}$ converges exponentially to the harmonic metric $H_t$ as $t\to\infty$. Let $(A_t,\Phi_t)$ be the pair such that $(\bar{\partial}_E,t\varphi,H_t^\app)$ and $(\bar{\partial}_{A_t},t\Phi_t,H_0)$ are complex gauge equivalent, i.e.,
\[(\bar{\partial}_{A_t},t\Phi_t,H_0)=\left( \bar { \partial } _ { E } , t\varphi , H_t^\app \right)\cdot g_t = \left( g_t ^ { - 1 } \comp \bar { \partial } _ { E } \comp g_t , g_t ^ { - 1 }t \varphi g_t , H_t^\app \cdot g_t \right), \]
 where $(H_t^\app \cdot g_t) ( v , w ) = H_t^\app ( g_t v , g_t w )$. Let $\gamma_t$ be defined by $H_t=H_t^{\app}\cdot(g_te^{-\gamma_t}g_t^{-1})$, then $\bF_t^{\app}(\gamma_t)=0$, where
 \begin{equation}\label{eqn:Hitchin_Op}
   \bF_t^{\app}(\gamma):=-i\star\Big(F_{A_t^{\exp(\gamma)}}+t^2[e^{-\gamma}\Phi_t e^\gamma,e^\gamma{\Phi_t}^{\dagger}_{H_0}e^{-\gamma}]\Big),
 \end{equation}
$A_t^{\exp(\gamma)}$ is the Chern connection $D(e^{-\gamma}\bar{\partial}_{A_t}e^{\gamma},H_0)$. The linearization of $\bF_t^\app$ at $\gamma=0$ is
\begin{align}
  L_t\gamma&:=\Delta_{A_t}-i\star t^2 M_{\Phi_t}\gamma,\text{ where }\\
  \Delta_{A_t}&:=d_{A_t}^\ast d_{A_t}\gamma,~M_{\Phi_t}\gamma=[{\Phi_t}^\ast,[\Phi_t,\gamma]]-[\Phi_t,[{\Phi_t}^\ast,\gamma]].
\end{align}
Here the adjoints `$\ast$' are taken with respect to $H_0$.

Write $\gamma=\left(\begin{smallmatrix}
  u_0&u_1\\\bar{u}_1&-u_0
\end{smallmatrix}\right)$, then by \eqref{eqn:HiggsUnitary_eq}, near a zero of $\det\varphi$ we have \begin{multline*}
  L_t\gamma=\begin{pmatrix}
  \Delta_0 u_0  & \Delta_0 u_1 - 4F_t^m i \partial_\theta u_1 + 4(F_t^m)^2 u_1 \\
  \Delta_0 \bar{u}_1 + 4F_t^m i \partial_\theta \bar{u}_1 + 4(F_t^m)^2 \bar{u}_1 &  -\Delta_0 u_0 \end{pmatrix}\\
  +8r^m t^2 \begin{pmatrix} 2 \cosh(2 v_t) u_0  &  \cosh(2v_t) u_1 - e^{-(m-2d)i\theta} \bar{u}_1 \\
\cosh(2v_t )  \bar{u}_1 - e^{(m-2d)i\theta} u_1 &  -2 \cosh(2v_t) u_0 \end{pmatrix},
\end{multline*}
where $v_t$ solves \eqref{eqn:vtODE}, $F_t^m(r) = \frac12 (\frac12(m-2d) + r\partial_r v_t)$, and \[
\Delta_{k} := -\partial_r^2	u_1-r^{-1}\partial_r u_1-r^{-2}\left(\partial_{\theta} + ik\right)^2.
\]
Since $F_t^m$ vanishes and $r^m\cosh(2v_t)$ is regular at $r=0$, the indicial roots of $L_t$ are those of $\Delta_0$, i.e., $\mathbb{Z}$.

As $t\to\infty$, we have $(A_t,\Phi_t)\to (A_\infty,\Phi_{\infty})$, $\Phi_{\infty}=\phi_{\infty}^{1,0}$, given by \eqref{eqn:InfHiggsUnitary_eq}. Then
\begin{align*}
    \Delta_{A_\infty}\gamma&=\begin{pmatrix}
\Delta_0 u_0  &  \Delta_{\frac12(m-2d)} u_1\\ \overline{\Delta_{\frac12(m-2d)} {u}_1} & -\Delta_0 u_0 \end{pmatrix},\\
-i \star M_{\Phi_\infty} \gamma &= 8r^m\begin{pmatrix}
2u_0&u_1-e^{-(m-2d)i\theta}\bar u_1\\
\bar u_1-e^{(m-2d)i\theta}\bar u_1&-2u_0
\end{pmatrix}.
\end{align*}
It follows that the indicial roots of $L_\infty^0 := \Delta_{A_\infty} - i \star M_{\Phi_\infty}$ are $\mathbb{Z}$ for the diagonal terms, and $\mathbb{Z}+\frac m2$ for the off-diagonal terms.

By \cite[Proposition~5.1]{fredrickson2018generic}, $L_t: H^2(i\mathfrak{su}(E))\to L^2(i\mathfrak{su}(E))$ is invertible, we will show that the norm of its inverse $L_t^{-1}$ is bounded by $Ct^2$.

\begin{proposition}
    $L_t^{-1}:L^2(i\mathfrak{su}(E))\to L^2(i\mathfrak{su}(E))$ is bounded.
\end{proposition}
\begin{proof}
    Consider the operator $L_t^0\gamma:=\Delta_{A_t}\gamma-i\star M_{\Phi_t}\gamma$. Since $L_t\geq L_t^0>0$, we only need to show that the smallest eigenvalue $\lambda_t^0$ of $L_t^0$ is uniformly bounded below by some positive constant. Suppose on the contrary that $\lambda_{t_j}^0\to 0$ for a sequence $t_j\to\infty$.

    Near $p\in Z$, let $z$ be a holomorphic coordinate centered at $p$ as in Theorem \ref{thm:LimConfig}. Define smooth positive weight functions $\mu_t$ on $\Sigma$ such that
    \[\mu_t=\begin{cases}(t^{-2/(1+m/2)}+|z|^2)^{1/2}\quad&\text{ on }\Sigma_p(3/4),~p\in Z,\\ 1 \quad &\text{ on } \Sigma\backslash \cup_{p\in Z} \Sigma_p(1),\end{cases}\]
    and that $\mu_t$ is uniformly bounded. Let $\psi_t$ be an eigenfunciton of $L_t^0$ with eigenvalue $\lambda_t^0$, normalized so that $\sup_{\Sigma} \mu_t^\delta |\psi_t|=1$, where $\delta>0$ is a constant to be chosen later. Write $\mu_j=\mu_{t_j}$, $\psi_j=\psi_{t_j}$, and choose $q_j$ such that $\mu_j^\delta(q_j)|\psi_j(q_j)|=1$.

    We claim that $\mu_j^\delta |\psi_j|$ converges to $0$ on any compact subset of $\Sigma\backslash Z$. If not, then for some compact $K\subset \Sigma\backslash Z$, we have (after passing to a subsequence) $\sup_K \mu_j^\delta |\psi_j|\geq \eta>0$.  By elliptic regularity and a diagonalization argument, there exists a subsequence of $\psi_j$ (still denoted by $\psi_j$) converging in $C^\infty$ on any compact subset of $\Sigma\backslash Z$ to $\psi_\infty$. Then we have $L_{\infty}^0\psi_{\infty}=0, |\psi_{\infty}|\leq |z|^{-\delta}$ on $\Sigma_p(3/4)$, and $\psi_{\infty}\neq 0$. $L_\infty^0$ is a conic differential operator, so $\psi_\infty$ has a complete asymptotic expansion near each $p\in Z$ in $r^{\nu_i}$ and $\log r$, where the $\nu_i$ are the indicial roots of $L_{\infty}^0$. Choose $\delta$ to be smaller than the absolute value of the first nonzero indicial root, then each $\nu_i\geq 0$ as $|\psi_\infty|\leq |z|^{-\delta}$. Integrate by parts,
    \[\lambda_j^0\lVert \psi_j\rVert^2=\langle L_j^0\psi_j,\psi_j\rangle=\int |d_{A_j}\psi_j|^2+|[\Phi_j,\psi_j]|^2.\]
    Since $|\psi_j|^2\leq\mu_j^{-2\delta}$, the left hand side tends to $0$, and we have $d_{A_{\infty}}\psi_\infty=[\Phi_{\infty},\psi_{\infty}]=0$. It follows that $\psi_\infty$ is a multiple of $\Phi_\infty$, and $|\psi_{\infty}|$ is constant. Note that $m$ is odd for at least one of $p$, then near $p$, $|\psi_{\infty}|=O(r^{1/2})\to 0$ as $r\to 0$, and $\psi_{\infty}=0$, a contradiction.

    By the above claim, after passing to a subsequence, we may assume that $q_j\to q_\infty\in Z$. Let $z$ be a holomorphic coordinate around $q_\infty$ as above. Let $z_j=z(q_j)$, $w=t^{1/(1+m/2)}z$, and $w_j=t_j^{1/(1+m/2)}z_j$. There are two possibilities, either $|w_j|$ is bounded or $|w_j|\to\infty$.

    Suppose that $|w_j|\leq C$ for all $j$. Let $\tilde{\psi}_j(w)=t_j^{-1/(1+m/2)\delta}\psi_j(t_j^{-1/(1+m/2)}w)$. Then $|\tilde{\psi}_j(w)|\leq (1+|w|^2)^{-\delta/2}$ with equality at $w_j$. Recall that by construction, on $\{ |z|\leq 1/2\}$, $H_t^{\app}$ is the model solution and we may write $(A_t,\Phi_t)=(A_t^{\model},\Phi_t^{\model})$. Then we have $\Delta_{A_t}=t^{2/(1+m/2)}\Delta_{\varrho}$ where $\Delta_{\varrho}$ is the Laplacian associated to $A_1^{\model}$. Similarly $t^2 M_{\Phi_t}=t^{2/(1+m/2)}M_{\varrho}$, where $M_{\varrho}$ is independent of $t$. Since $\psi_j$ are eigenfunctions of $L_{t_j}^0$, we have
    \[(\Delta_{\varrho}-i\star t_j^{-2/(1+m/2)}M_{\varrho})\tilde{\psi}_j(w)=t_j^{-2/(1+m/2)}\lambda_j^0\tilde{\psi}_j(w),\]
    on $\{|w|\leq t_j^{2/(1+m/2)}/2\}$. Let $j\to\infty$, a subsequence of $\tilde{\psi}_j$ converges to $\tilde{\psi}_\infty\neq 0$, defined on $\CC$. Then $\tilde{\psi}_\infty\sim |w|^{-\delta}$ as $w\to\infty$, and $\Delta_{\varrho}\tilde{\psi}_{\infty}=0$. Consider the splitting $\tilde{\psi}_\infty=\psi_{\infty}'+\psi_{\infty}''$ into diagonal and off-diagonal parts. Then $\Delta_0 \psi_{\infty}'=0$, $\psi_{\infty}'=O(|w|^{-1})$ as $w\to\infty$, justifying the integration by parts $\langle \Delta_{\varrho}\psi_\infty',\psi_{\infty}'\rangle=|d_{A_\varrho}\psi_\infty'|^2=0$. $|\psi'_{\infty}|$ is constant and vanishes at infinity, so it is identically zero. Expanding $\psi_\infty''$ into Fourier series, then the $k^{\mathrm{th}}$ Fourier component satisfies
$ (-r^{-2} (r\del_r)^2 - (i \del_\theta - 2 F_1^m)^2) \psi_{\infty,k}'' = 0$.  As $\varrho \to \infty$, this operator converges
to $- r^{-2} (r\del_r)^2 - (k - (m-2d)/2)^2$, and by standard ODE theory, any bounded solution must decay exponentially. Then $\psi_{\infty}''=0$ by the same argument as above. We obtain $\tilde{\psi}_\infty=0$, a contradiction.

The remaining case is that $\sigma_j:=|w_j|\to\infty$. Let $\hat{w}=w/\sigma_j$. Define $\hat{\psi}_j(\hat{w})=\sigma_j^{-\delta}\tilde{\psi}_j(\sigma_j\hat{w})$, then $|\hat{\psi}_j(\hat{w})|\leq (\sigma_j^{-2}+|\hat{w}|^2)^{-\delta/2}$, with equality at some point $\hat{w}_j$ with $|\hat{w}_j|=1$. A subsequence of $\hat{\psi}_j$ converges to $\hat{\psi}_\infty$ on $\CC$, with $|\hat{\psi}_\infty|\leq |w|^{-\delta}$ and $\Delta_{A_{\infty}}\hat{\psi}_{\infty}=0$. The operator $\Delta_{A_{\infty}}$ is conic at both $0$ and $\infty$, and is homogeneous of degree $-2$. Since none of the terms $r^{\nu_i},r^0,\log r$ are bounded by $r^{-\delta}$ at both $0$ and $\infty$, we must have $\hat{\psi}_\infty=0$, a contradiction.
\end{proof}

By the same proof as in \cite[Proposition~5.2b]{fredrickson2018generic}, we have $\lVert L_t^{-1}\rVert_{\mathcal{L}(L^2,H^2)}\leq Ct^2$. The following is analogous to \cite[Lemma~6.8]{mazzeo2016ends}.

\begin{lemma}
The approximate solution satisfies
\begin{equation} \label{eqn:appbound}
 \lVert A_t\rVert_{C^1} \leq Ct
\end{equation}
on the disk $\Sigma_p(1)$, so that for any $H^{k+1}$ section $\gamma$, $k=0,1$,
\begin{equation} \label{eqn:bound2}
 \lVert d_{A_t} \gamma \rVert_{H^k} \leq Ct \lVert\gamma\rVert_{H^{k+1}},
\end{equation}
and moreover,
\begin{equation} \label{eqn:bound3}
\lVert L_t \gamma\rVert_{L^2} \leq Ct^2 \lVert\gamma\rVert_{H^2}.
\end{equation}
\end{lemma}
\begin{proof}
Recall that $A_t=F_{\chi,t}^m \mathrm{diag}(i,-i) d\theta$, where $F_{\chi,t}^m(r)=\frac14(m-2d)+\frac12 r\partial_r(\chi v_t)(r)$. $F_{\chi,t}^m$ has the same asymptotics as $F_t^m$, so is uniformly bounded in $t$. By the construction in Section \ref{subsec:ModelSolution}, we have $F_t^m=\eta(\rho)$, $\rho=\frac{4tr^{\frac{m}{2}+1}}{1+\frac{m}{2}}$, where $\eta(\rho)=\frac14(m-2d)+\frac{1}{2}(1+\frac{m}{2})\rho\psi'(\rho)$. Then
\[ |\partial_r F_t^m(r)|=|4tr^{m/2}\eta'(\rho)|=4tr^{m/2}\Big|\frac{1}{4}\Big(1+\frac{m}{2}\Big)\rho\sinh(2\psi(\rho))\Big|\leq Ct.\]
This implies the desired estimate for $|\partial_r F_{\chi,t}^m|$.
\end{proof}
Using this lemma, we obtain the following estimate for the nonlinear terms as in \cite[Lemma~ 6.9]{mazzeo2016ends}.
\begin{lemma}
There exists a constant $C>0$ such that
\begin{equation}
 \lVert Q_t(\gamma_1) - Q_t(\gamma_2)\rVert_{L^2} \leq C \rho t^2  \lVert \gamma_1 -\gamma_2 \rVert_{H^2}
\end{equation}
for all $\rho \in (0, 1]$ and $\gamma_1, \gamma_2$ satisfying $ \lVert\gamma_i \rVert_{H^2} \leq \rho$.
\end{lemma}
As in \cite[Sec.~6.2]{fredrickson2018generic}, we have the following exponential convergence result.
\begin{theorem}\label{thm:ExpConvergence}
Fix a locally fiducial Higgs bundle $(\bar{\partial}_E, \varphi) \in \mathcal{M}$. For $t$ sufficiently large, there is a unique $H_0$-Hermitian $\gamma_t$ satisfying
$\|\gamma_t\|_{H^2\left(i \mathfrak{su}(E)\right)} \leq C e^{-\delta t}$,
such that  $\mathbf{F}^\app_t(\gamma_t)=0$, i.e., $(d_{A_t^{\exp (\gamma_t) }}, e^{-\gamma_t}\Phi_te^{\gamma_t})$ solves
Hitchin's equations.  (Equivalently, $H_t=H_t^\app\cdot(g_t e^{-\gamma_t}g_t^{-1})$ is harmonic.)
\end{theorem}
Higher order derivatives of $\gamma_t$ can be estimated as in \cite[Sec.~4.2]{mochizuki2023asymptotic}. In fact, by the above theorem and the Sobolev embedding, we have $\lVert\gamma_t\rVert_{\MC^0}\leq Ce^{-\delta t}$. By the construction of $H_t^{\app}$, we have $\lVert g_t\rVert_{\MC^l(K)}\leq C_{l,K}$, and $|s(H_\infty,H_t)-\mathrm{id}|_{H_\infty}\leq C_{0,K} e^{-C'_{0,K} t}$ on $K$ for any compact $K\subset \Sigma\backslash Z$, where $s(H_\infty,H_t)$ is given by $H_t=H_{\infty}s(H_\infty,H_t)$. By \cite[Theorem~4.3]{mochizuki2023asymptotic},
\[\lVert s(H_\infty,H_t)-\mathrm{id}\rVert_{\MC^l(K)}\leq C_{l,K}e^{-C'_{l,K}t},\]
and Theorem \ref{thm:exp_conv} follows.

%\section{Asymptotic of the Hyperkähler metric on subintegrable systems}\label{sec:Mp}
%\documentclass[HyperKahler_Metric.tex]{subfiles}
%\begin{document}

\section{Gauss-Manin deformations}\label{sec:GM_deform}
For a fixed partition $\bp$, the collection of lowest strata $\calM_{\bp,V_\max}$ is a torsor for the abelian scheme $\Prym(\tilde{\pi}: \wt{S}_q \to \Sigma) \to \calB_{\bp}$. The tangent bundle $T\Prym(\tilde{\pi})$ features a Gauss-Manin connection. To define a (generalized) semi-flat metric on $\calM_{\bp,V_\max}$ we need to understand the tangent vectors along $\calM_{\bp,V_\max}$ that correspond to flat sections of the Gauss-Manin connection. In this section, we will characterize Gauss-Manin families of line bundles in $\Prym(\tilde{\pi}: \wt{S}_q \to \Sigma)$ and the corresponding families of Higgs bundles in $\calM_{\bp,V_{\max}}$. Subsequently,
this allows us to split the exact sequence \eqref{lemm:exact_sequence_M_Vmax} for the tangent bundle $T\calM_{\bp,V_{\max}}$.

%\subsection{Notation}
%\begin{center}
%  \begin{tabular}{c|c}
%    symbol & definition \\ \hline
%  $\wt{S}_q^\circ$ & $\wt{S}_q \setminus \tilde{\pi}^{-1}Z(q)$. \\
%  $Z \subset \Sigma$ & zero set of $q$ \\
%  $\wt{Z} \subset \wt{S}_q $ & $\tilde{\pi}^{-1} Z$ \\
%  $q_0 \in \calB_{p_\odd}$ & smooth point of stratum of base \\
%  $P \in \Sigma$ & zeroes of $q$ \\
%  $Q\in \wt{S}_q$ & preimages of zeroes of $q$\\
%  $W \subset \calB_{\bp}$ & polydisc \\
%  $\Sigma_W$ & $\Sigma \times W$ \\
%  $(\Sigma_W)_{\tilde{p}}$ & Neighbourhood of zero divisor ${\tilde{p}}$ in $\Sigma_W$ \\
%  $(\Sigma_W)_0$ & $\Sigma_W \setminus \bigcup_{\tilde{p}} (\Sigma_W)_{\tilde{p}}$ \\
%  $(\Sigma_W)_{0,{\tilde{p}}}$ & $(\Sigma_W)_0 \cap (\Sigma_W)_{\tilde{p}}$ \\
%  $\wt{S}_W$ & family of spectral curves over $W$ \\
%  $(\wt{S}_W)_{\tilde{p}}$ & Neighbourhood of divisor the ${\tilde{p}}$ of a preimage of a zero in $\wt{S}_W$ \\
%  $(\wt{S}_W)_0$ & $\wt{S}_W \setminus \bigcup_{\tilde{p}} (\wt{S}_W)_{\tilde{p}}$ \\
%  $(\wt{S}_W)_{0,{\tilde{p}}}$ & $(\wt{S}_W)_0 \cap (\wt{S}_W)_{\tilde{p}}$ \\
%\end{tabular}
%\end{center}
\subsection{Singular flat metrics}\label{ssec:sing_flat_metric}
Fix a smooth point $q \in \calB_{\bp}$. Fix a stratum $\mathcal{T}_V\subset \calH^{-1}(q)$ indexed by the divisor $V$.  %We keep it general for the moment by will write some comments for the special case $V=V_{\max}$ in {\color{blue}blue}.
Denote by $p\in \Sigma$ the zeros of $q$ and by ${\tilde{p}} \in \wt{S}_q$ their preimages. Furthermore, let  $\wt{S}_q^\circ=\wt{S}_q \setminus \tilde{\pi}^{-1}(Z(q))$ denote the complement of the preimages of the zeros. Let $\calL \in \calP_V$ be a line bundle on the normalized spectral curve satisfying the twisted Prym condition \eqref{eq:twisted_Prym}. Recall that in Section \ref{subsec:lim_config} we defined the canonical weight function $\chi_\msV$ with $\msV=\frac12(\Lambda-\tilde{\pi}^*V)$. The parabolic degree of $(\calL,\chi_\msV)$ is zero. Hence, there exists a flat Hermitian metric $h_\calL$ on $\calL \rest _{\wt{S}_q^\circ}$, such that for each ${\tilde{p}}$, there exists a coordinate $w$ centered at ${\tilde{p}}$ and a local frame $s_{\tilde{p}}$ of $\calL$ at ${\tilde{p}}$ such that $h_\calL(s_{\tilde{p}},s_{\tilde{p}})=\vert w \vert^{\tilde{\pi}^*V_{\tilde{p}}-\Lambda_{\tilde{p}}}$. The Hermitian metric $h_\calL$ is unique up to a positive constant.
\par
Note that whenever $V_{\tilde{p}}= \Lambda_{\tilde{p}}$ the Hermitian metric $h_\calL$ extends smoothly over $\tilde{p}$. In particular, when $V=V_{\max}$, the Hermitian metric extends to $\wt{S}_q\setminus \supp R$, where $R$ is the ramification divisor of $\tilde{\pi}$.
\par
Let $\kappa_\calL: \pi_1(\wt{S}^\circ_q) \to \mathsf{U}(1)$ denote the monodromy representation of $(\calL,\nabla_{h_\calL})$, where $\nabla_{h_\calL}$ denotes the Chern connection. $\kappa_\calL$ descends to a group homomorphism $\kappa_\calL:H_1(\wt{S}^\circ_q, \mathbb{Z}) \to \mathsf{U}(1)$, which we denote by the same symbol.

\begin{lemma}\label{lemm:monodromy_away_from_zeroes}
  Let $\calL_1,\calL_2$ be two line bundles as above. Let $U_{\tilde{p}}$ be a simply connected neighborhood of ${\tilde{p}}$ such that $U_{\tilde{p}} \cap U_{{\tilde{p}}'}=\varnothing$. Let
  \[ \iota: H_1(\wt{S}_q\setminus \bigcup_{\tilde{p}} U_{\tilde{p}}, \mathbb{Z}) \to H_1(\wt{S}_q^\circ, \mathbb{Z})
  \] be the morphism induced by inclusion. If $\kappa_{\calL_1} \circ \iota=\kappa_{\calL_2} \circ \iota$, then there exists a holomorphic isomorphism $\calL_1 \cong \calL_2$.
\end{lemma}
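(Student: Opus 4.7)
The plan is to first construct a flat unitary isomorphism between $(\calL_1\rest_{\wt{S}_q^\circ}, h_{\calL_1})$ and $(\calL_2\rest_{\wt{S}_q^\circ}, h_{\calL_2})$ via the standard classification of flat unitary line bundles by their monodromy representation, and then to extend the resulting holomorphic isomorphism across each puncture $\tilde{p}$ by exploiting the fact that $\calL_1$ and $\calL_2$ have the same local normal form in a neighborhood of $\tilde{p}$.

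First I would note that because each $U_{\tilde{p}}$ is a simply connected open neighborhood of $\tilde{p}$, the surface with boundary $\wt{S}_q \setminus \bigcup_{\tilde{p}} U_{\tilde{p}}$ is a deformation retract of $\wt{S}_q^\circ$, so $\iota$ is in fact an isomorphism on $H_1$, and the hypothesis is equivalent to $\kappa_{\calL_1} = \kappa_{\calL_2}$ as homomorphisms $H_1(\wt{S}_q^\circ,\mathbb{Z}) \to \mathsf{U}(1)$. Since flat unitary line bundles on a connected manifold are classified up to flat unitary isomorphism by their monodromy, there exists a flat unitary isomorphism $\phi: \calL_1\rest_{\wt{S}_q^\circ} \to \calL_2\rest_{\wt{S}_q^\circ}$. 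Because the Chern connections $\nabla_{h_{\calL_i}}$ are compatible with the holomorphic structures on $\calL_i$, the flatness of $\phi$ implies that $\phi$ is automatically a holomorphic isomorphism of the underlying line bundles on $\wt{S}_q^\circ$.

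The remaining task is to extend $\phi$ holomorphically across each $\tilde{p}$. In the local frames $s_{\tilde{p},i}$ provided in the statement I would write $\phi(s_{\tilde{p},1}) = f \cdot s_{\tilde{p},2}$ for some holomorphic function $f$ on $U_{\tilde{p}} \setminus \{\tilde{p}\}$. Because $\calL_1,\calL_2 \in \calP_V$, the exponents $\tilde{\pi}^*V_{\tilde{p}} - \Lambda_{\tilde{p}}$ in the local models of $h_{\calL_1}$ and $h_{\calL_2}$ coincide, and unitarity of $\phi$ then forces $|f|^2 \equiv 1$ on the punctured disk. By Riemann's removable singularity theorem $f$ extends to a holomorphic function on $U_{\tilde{p}}$ which, being unimodular by continuity, must be a nonzero constant by the open mapping theorem. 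Consequently $\phi$ extends to a holomorphic isomorphism $\calL_1\rest_{U_{\tilde{p}}} \to \calL_2\rest_{U_{\tilde{p}}}$, and gluing these local extensions with $\phi$ on $\wt{S}_q^\circ$ yields the required global isomorphism $\calL_1 \cong \calL_2$.

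The main subtlety, and essentially the only place where the specific structure of the problem enters, is the coincidence of the local exponents $\tilde{\pi}^*V_{\tilde{p}} - \Lambda_{\tilde{p}}$; this is exactly what the hypothesis $\calL_1,\calL_2 \in \calP_V$ (same twisted Prym condition, hence same parabolic weights) provides. Everything else is formal flat-bundle theory combined with Riemann extension, so I expect no real difficulty beyond that local matching.
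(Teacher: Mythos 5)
Your proof is correct, and its core is the same as the paper's: identify the two restrictions to $\wt{S}_q^\circ$ as flat unitary bundles via their monodromy, then upgrade to a holomorphic isomorphism. The one genuine difference is how you get from $\kappa_{\calL_1}\circ\iota=\kappa_{\calL_2}\circ\iota$ to $\kappa_{\calL_1}=\kappa_{\calL_2}$: you argue purely topologically that $\iota$ is an isomorphism (the punctured surface retracts onto the complement of the $U_{\tilde{p}}$, so the boundary loops already lie in the image of $\iota$), whereas the paper instead pins down the monodromy around each loop $\gamma_{\tilde{p}}$ directly from the parabolic weight, showing it equals $(-1)^{2\alpha_{\tilde{p}}}$ for \emph{both} bundles because both lie in $\calP_V$ with the same weights. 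Your route is more elementary but leans on the unstated regularity of the neighborhoods: for an arbitrary simply connected open $U_{\tilde{p}}$ the deformation-retract claim is not automatic, while it is immediate for the coordinate disks used in all applications of the lemma; the paper's argument only needs $\mathrm{Im}(\iota)$ together with the classes $[\gamma_{\tilde{p}}]$ to generate $H_1(\wt{S}_q^\circ,\mathbb{Z})$ and makes visible where the hypothesis of a common Higgs divisor $V$ enters. In compensation, you spell out carefully the step the paper leaves implicit, namely the extension of the flat (hence holomorphic) isomorphism across each $\tilde{p}$ using the matching local exponents $\tilde{\pi}^*V_{\tilde{p}}-\Lambda_{\tilde{p}}$ and Riemann's removable singularity theorem; this is where the common parabolic structure enters your argument instead, and it is exactly the right mechanism. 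The only cosmetic caveat there is that the two local models may use different coordinates centered at $\tilde{p}$, so one should conclude that $|f|$ is bounded above and below (rather than identically $1$), which still forces $f$ to extend to a nonvanishing holomorphic function and suffices for the gluing.
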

\begin{proof}
  Choose a loop $\gamma_{\tilde{p}}$ around ${\tilde{p}}$. Then by the compatibility with the parabolic structure the monodromy $\kappa_\calL(\gamma_{\tilde{p}})$ is fixed to be $(-1)^{2\alpha_{\tilde{p}}}$. Together with the assumption this implies $\kappa_{\calL_1}=\kappa_{\calL_2}$. Hence, we obtain an isomorphism of flat bundles $(\calL_1,\nabla_{h_{\calL_1}})\cong(\calL_1,\nabla_{h_{\calL_1}})$, and in particular, a holomorphic isomorphism $\calL_1\cong \calL_2$.
\end{proof}

\begin{proposition}\label{prop:sigma-invar_monodromy}
  Let $\calL \in \wt{\mathcal{T}}_V$ be $\sigma$-invariant, i.e., $\sigma^*\calL=\calL$. Then $\kappa_\calL$ is valued in $\{\pm 1\}$.
\end{proposition}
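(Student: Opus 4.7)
The plan is to use the uniqueness of $h_\calL$ to promote $\sigma^*\calL\cong\calL$ to a $\sigma$-invariance of the flat Hermitian structure, and then to combine this with the twisted Prym condition \eqref{eq:twisted_Prym} to force $\kappa_\calL^2=1$.

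First I would argue that $\sigma^*h_\calL=h_\calL$. The pullback $\sigma^*h_\calL$, transported to $\calL$ via the chosen isomorphism $\sigma^*\calL\cong\calL$, is again a flat Hermitian metric on $\calL$, and its local behavior at each $\tilde{p}$ is the prescribed one since the canonical weight $\msV=\tfrac{1}{2}(\Lambda-\tilde{\pi}^*V)$ is manifestly $\sigma$-invariant (both $\Lambda$ and $\tilde{\pi}^*V$ being so by construction, and $\sigma$ permuting the preimages over a common $p\in\Sigma$). Uniqueness of $h_\calL$ up to a positive constant then gives $\sigma^*h_\calL=c\cdot h_\calL$, and applying $\sigma^*$ once more together with $\sigma^2=\mathrm{id}$ yields $c^2=1$, hence $c=1$. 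In particular the Chern connection $\nabla_{h_\calL}$ is $\sigma$-invariant, so $\kappa_\calL\circ\sigma_*=\kappa_\calL$ on $H_1(\wt{S}_q^\circ,\mathbb{Z})$.

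Second I would invoke the twisted Prym condition: it identifies $\calL\otimes\sigma^*\calL$ with a fixed parabolic line bundle $\calN$ on $\wt{S}_q$, and the normalization $\msV=\tfrac{1}{2}(\Lambda-\tilde{\pi}^*V)$ is arranged precisely so that $\calN$, together with its canonical flat Hermitian metric, restricts to the trivial flat Hermitian line bundle on $\wt{S}_q^\circ$. Combined with $\sigma^*\calL\cong\calL$, this identifies $\calL^{\otimes 2}|_{\wt{S}_q^\circ}$ with the trivial flat Hermitian line bundle, so $\kappa_\calL(\gamma)^2=1$ for every $\gamma$ and $\kappa_\calL$ is $\{\pm 1\}$-valued.

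The main obstacle will be the second step: verifying that the canonical flat metric on $\calN$ genuinely trivializes on $\wt{S}_q^\circ$, rather than contributing a residual character from the local exponents at the $\tilde{p}$. This is essentially parabolic weight bookkeeping at the points $\tilde{p}$, and it is precisely what the normalization $\msV=\tfrac{1}{2}(\Lambda-\tilde{\pi}^*V)$ is designed to make work; concretely one has to check that $h_\calL\otimes\sigma^*h_\calL$ written in the canonical frame of $\calN$ has trivial local monodromy around each $\tilde p$. The first step, by contrast, is a direct consequence of the uniqueness statement already established for $h_\calL$.
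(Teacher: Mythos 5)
Your proof is correct and rests on the same key input as the paper's: the twisted Prym condition together with $\sigma^*\calL\cong\calL$ yields an isomorphism $\calL\otimes\calL\cong\calO(\Lambda-\wt{V})$, and since $\Lambda-\wt{V}$ is an integral divisor supported on the points ${\tilde{p}}$, that bundle is canonically flat and trivial on $\wt{S}_q^\circ$, forcing $\kappa_\calL^2=1$. The paper packages exactly this as a symmetric pairing $C_\calL$ with compatible metric $h_\calL(s,s)=\vert C_\calL(s,s)\vert$ reducing the structure group to $\mathrm{O}(1)=\{\pm 1\}$, so your argument is the same one, with the uniqueness detour of your first step not actually needed.
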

\begin{proof}
  By the Prym condition and $\sigma$-invariance we have a symmetric pairing
  \[ C_\calL: \calL \otimes \calL \cong \calL \otimes \sigma^*\calL \cong \calO(\Lambda-\wt{V}).
  \] We can define a Hermitian metric by $h_\calL(s,s)=\vert C_\calL(s,s) \vert$. Clearly, this metric is compatible with the symmetric pairing. This reduces the structure group of $\calL$ to $\mathrm{O}(1)=\{\pm 1\}$.
\end{proof}

\begin{lemma}\label{lem:exist_sigma_invariant_pointwise}
  If $q$ has a zero of odd order or $\deg(V)\equiv 0 \pmod 2$, then there exists a $\sigma$-invariant line bundle $\calL \in \wt{\mathcal{T}}_{V}$.
\end{lemma}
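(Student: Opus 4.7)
The plan is to combine $\sigma$-invariance of $\calL$ with the twisted Prym condition to reduce to finding a $\sigma$-invariant square root of $\calO(\Lambda-\tilde{\pi}^*V)$, and then to produce such a square root in each of the two cases of the hypothesis. When $\sigma^*\calL\cong \calL$, the twisted Prym condition \eqref{eq:twisted_Prym} collapses to $\calL^{\otimes 2}\cong \calO(\Lambda-\tilde{\pi}^*V)$, and conversely $\calO(\Lambda-\tilde{\pi}^*V)$ is manifestly $\sigma$-invariant (the divisor $\Lambda$ is supported symmetrically on the preimages of $Z(q)$, and $\tilde{\pi}^*V$ is pulled back from $\Sigma$), so any $\sigma$-invariant square root of it is automatically an element of $\wt{\mathcal{T}}_V$. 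The task thus reduces to exhibiting such a square root.

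In the case where all zeros of $q$ have even order, $\tilde{\pi}$ is unramified over $Z(q)$, every preimage $\tilde{p}$ belongs to a $\sigma$-symmetric pair $\{\tilde{p}^+,\tilde{p}^-\}$, and $\Lambda = \tilde{\pi}^*\Lambda_\Sigma$ for some divisor $\Lambda_\Sigma$ on $\Sigma$. The target then simplifies to $\tilde{\pi}^*\calO(\Lambda_\Sigma-V)$. Under the hypothesis $\deg V\equiv 0\pmod 2$, together with the parabolic-degree-zero constraint that forces $\deg\Lambda$ to be even and hence pins down the parity of $\deg\Lambda_\Sigma$, one checks $\deg(\Lambda_\Sigma-V)$ is even, so $\calO(\Lambda_\Sigma-V)$ admits a square root $N$ on $\Sigma$, and $\tilde{\pi}^*N$ is the desired $\sigma$-invariant square root.

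When $q$ has at least one odd-order zero at some $p_0$, its preimage $\tilde{p}_0$ is a $\sigma$-fixed ramification point, and $\calO(\tilde{p}_0)$ is a $\sigma$-invariant line bundle of odd degree satisfying $\calO(2\tilde{p}_0)\cong \tilde{\pi}^*\calO(p_0)$. My plan in this case is to look for $\calL$ of the form $\calO(D)\otimes \tilde{\pi}^*M$, where $D$ is a $\sigma$-invariant integer combination of ramification points chosen so that $\calO(2D)$ absorbs, via the identity $\calO(2\tilde{p})\cong \tilde{\pi}^*\calO(p)$, the ramified (non-pullback) part of $\calO(\Lambda)$. Then $\calL^{\otimes 2}\cong \calO(\Lambda-\tilde{\pi}^*V)$ becomes a square-root equation for $M$ on $\Sigma$, which one can solve for any parity of $\deg V$.

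The main obstacle will be this second case: $\calO(\Lambda)$ is \emph{not} a pullback from $\Sigma$ when $\tilde{\pi}$ is ramified, so the reduction to a square-root problem downstairs requires careful bookkeeping of how each ramification point contributes. The odd-degree class $\calO(\tilde{p}_0)$ is precisely what provides the needed parity flexibility: shifting $D$ by $\pm\tilde{p}_0$ changes the degree of $\calO(D)$ by $\pm 1$ without disturbing $\sigma$-invariance, so the residual parity constraint on $\Sigma$ can always be matched, which explains why — unlike in Case 1 — no hypothesis on $\deg V$ is needed here.
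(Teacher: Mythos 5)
Your proposal is correct and is essentially the paper's own argument: the paper sets $\calL=\tilde{\pi}^*\bigl(K(-V)^{\frac12}\bigr)$ when $\deg V$ is even and $\calL=\tilde{\pi}^*\bigl(K(-V+p)\bigr)^{\frac12}(-\tilde{p})$ when $\deg V$ is odd, which is exactly your mechanism of pulling back a square root from $\Sigma$ and using a ramification point $\tilde{p}$ over an odd-order zero, via $\calO(2\tilde{p})\cong\tilde{\pi}^*\calO(p)$, to repair the parity. The one inaccuracy is your claim that $\calO(\Lambda)$ fails to be a pullback when $\tilde{\pi}$ is ramified: as a line bundle $\calO(\Lambda)\cong\tilde{\pi}^*K$ always, since $\Lambda$ is the vanishing divisor of the tautological section of $\tilde{\pi}^*K$ (only the divisor $\Lambda$ itself is not a pullback divisor), so the ``careful bookkeeping'' you anticipate in the second case collapses to the single parity adjustment you already describe in your final paragraph.
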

\begin{proof}
  If $\deg(V)\equiv 0 \pmod 2$, we define a $\sigma$-invariant line bundle by $\calL=\tilde{\pi}^*(K(-V)^{\frac12})$. If $\deg(V)\equiv 1 \pmod 2$, we choose a zero $p$ of odd order. Hence, ${\tilde{p}}=\tilde{\pi}^{-1}(p)$ is a ramification point. We define $\calL=\tilde{\pi}^*(K(-V+p))^{\frac12}(-{\tilde{p}})$.
\end{proof}

\subsection{Gauss-Manin line bundles}
Throughout this subsection we fix a quadratic differential $q$ and a compatible Higgs divisor $V$, so that the conditions of Lemma \ref{lem:exist_sigma_invariant_pointwise} are satisfied.
Let $W \subset \calB_{\bp}$ be a polydisc around $q$ and $\wt{S}_W$ the family of normalized spectral curves over $W$. Let $\mathcal{Z}\subset \Sigma \times W$ be the divisor of zeros of $w \in W$. Then for each Higgs divisor $V=\sum k_i p_i$ compatible with $q$, we obtain a divisor $\mathcal{V}\subset\mathcal{Z}$ by  $\calV=\sum k_i Z_i$, where $Z_i\subset \calZ$ is the integral divisor of $\Sigma \times W$ with $Z_i \cap (\Sigma \times \{q\})=p_i$. Denote by $\calP_\calV \subset \mathsf{Jac}(\wt{S} \to \calB_{\bp})$ the torsor for the relative Prym variety defined in \eqref{eq:twisted_Prym}.
From the construction in Lemma \ref{lem:exist_sigma_invariant_pointwise} it follows that:
\begin{lemma}
  For each polydisc $W$ around $q$ there exists a family of $\sigma$-invariant holomorphic line bundles $\underline{M}$ in $\wt{\mathcal{T}}_{\mathcal{V}}$.
\end{lemma}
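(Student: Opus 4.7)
My plan is to promote the pointwise constructions of Lemma \ref{lem:exist_sigma_invariant_pointwise} to holomorphic families over $W$ by running the very same formulas with all ingredients replaced by their natural families; the only genuine input is the existence of certain sections over $W$, which follows from the smoothness of $q\in\calB_{\bp}$.

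First I would shrink $W$ so that the divisor of zeros $\calZ\subset \Sigma\times W$ decomposes as a disjoint union of sections $Z_i\to W$ of constant multiplicity $k_i$; this is possible because the partition $\bp$ is locked along $\calB_{\bp}$ and $q$ is a smooth point of $\calB_{\bp}$. In particular, $\calV=\sum k_i Z_i$ is already a family of divisors, and for any odd-order zero $p$ of $q$ the associated section $P\subset \Sigma\times W$ and its $\tilde{\pi}$-preimage $\wt P\subset \wt{S}_W$ (which is again a section, since $P$ lies in the fiberwise ramification locus) are available as honest holomorphic sections over $W$.

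In the even case $\deg(V)\equiv 0\pmod 2$, I would then consider $K_{\Sigma}\otimes \calO(-\calV)$ on $\Sigma\times W$: it has even fiberwise degree, so its square roots form a torsor for the discrete group $\mathsf{Jac}(\Sigma)[2]$; fixing a square root $\Theta$ at $q$ extends uniquely to a holomorphic square root over $W$, and $\underline{M}:=\tilde{\pi}^*\Theta$ is a holomorphic family on $\wt{S}_W$, $\sigma$-invariant because $\tilde{\pi}\circ\sigma=\tilde{\pi}$. In the odd case the same argument applied to $K_{\Sigma}(-\calV+P)$ yields a square root $\Theta'$, and $\underline{M}:=\tilde{\pi}^*\Theta'\otimes\calO(-\wt P)$ is $\sigma$-invariant because $\sigma(\wt P)=\wt P$. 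In either case $\underline{M}$ restricts fiberwise to the bundle produced in Lemma \ref{lem:exist_sigma_invariant_pointwise}, so the family lies in $\wt{\mathcal{T}}_{\calV}$. The only step requiring care is the extension of the combinatorial data (the sections $Z_i$, $P$, and $\wt P$) from the fiber over $q$ to $W$; after that the square-root choice is a purely discrete matter, and pullback-and-twist are manifestly holomorphic in families and compatible with $\sigma$.
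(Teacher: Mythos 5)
Your proposal is correct and follows essentially the same route as the paper, which simply observes that the construction of Lemma \ref{lem:exist_sigma_invariant_pointwise} can be carried out in families once a zero of $w\in W$ (your section $P$) is labeled holomorphically over the polydisc. The only thing you add is an explicit justification that the square root extends over $W$ --- a discrete choice trivialized by the contractibility of the polydisc --- which the paper leaves implicit.
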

\begin{proof}
  The construction from the proof of Lemma \ref{lem:exist_sigma_invariant_pointwise} can be done in families upon labeling one zero of $w \in W$ in a holomorphically varying way. That is clearly possible on every polydisc.
\end{proof}
In the sequel, fix a holomorphic line bundle $\underline{M}$ on $\wt{S}_W$ with $\underline{M}_w \in \wt{\mathcal{T}}_{\mathcal{V}_w}$ for each $w\in W$ such that $\sigma^*\underline{M}_w\cong \underline{M}_w$.
\begin{definition}
  A family of line bundles $\underline{\calL} \in \wt{\mathcal{T}}_{\calV}(\wt{S}_W)$ is called Gauss-Manin if the monodromy associated to the flat line bundle $\underline{\calL}_w \otimes \underline{M}_w^{-1} \in \Prym(\wt{S}_w)$ is independent of $w \in W$.
\end{definition}
From Proposition \ref{prop:sigma-invar_monodromy} we obtain
\begin{corollary}
  \begin{enumerate}
    \itemsep0em
  \item The definition is independent of the choice of the $\sigma$-invariant line bundle $\underline{M}$.
  \item Every $\sigma$-invariant line bundle on $\wt{S}_W$ is Gauss-Manin.
  \end{enumerate}
\end{corollary}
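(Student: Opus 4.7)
The plan is to reduce both statements to the single claim: \emph{if $\underline{K}$ is a $\sigma$-invariant family of degree-zero line bundles in the relative Prym over $W$, then $w\mapsto \kappa_{\underline{K}_w}$ is constant on $W$.} Granting this, part (2) is immediate upon setting $\underline{K}:=\underline{\calL}\otimes \underline{M}^{-1}$, which is $\sigma$-invariant whenever $\underline{\calL}$ is. For part (1), given a second choice $\underline{M}'$, the family $\underline{M}\otimes(\underline{M}')^{-1}$ is $\sigma$-invariant and hence has $w$-constant monodromy by the claim; combining this with the multiplicativity identity
\[ \kappa_{\underline{\calL}_w\otimes(\underline{M}'_w)^{-1}} = \kappa_{\underline{\calL}_w\otimes \underline{M}_w^{-1}}\cdot \kappa_{\underline{M}_w\otimes(\underline{M}'_w)^{-1}} \]
shows that the Gauss-Manin condition with respect to $\underline{M}$ is equivalent to the one with respect to $\underline{M}'$.

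To prove the claim, I would first fix a topological trivialization of the smooth family $\wt{S}^\circ_W\to W$ over the contractible polydisc $W$, yielding a locally constant identification of $H_1(\wt{S}^\circ_w,\mathbb{Z})$ across $W$. By Proposition \ref{prop:sigma-invar_monodromy}, for every $w\in W$ the monodromy $\kappa_{\underline{K}_w}$ factors through the discrete subgroup $\{\pm 1\}\subset \mathsf{U}(1)$. Since the flat Hermitian metric on $\underline{K}_w$ is unique up to a positive constant, one can build the family of Chern connections so that it varies continuously in $w$; hence for every fixed cycle $\gamma$, the function $w\mapsto \kappa_{\underline{K}_w}(\gamma)$ is a continuous $\{\pm 1\}$-valued map on a connected set, and is therefore constant.

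The main obstacle is to make the continuous dependence of the monodromy on $w$ rigorous. I would do so by producing a family of Hermitian metrics realizing the prescribed local model $|z|^{\tilde{\pi}^*V-\Lambda}$ near each preimage of the zero divisor and varying smoothly in $w$ (e.g. model on small disks and patch via a partition of unity, then flatten by the unique constant rescaling forced by the period conditions). Monodromies of a smoothly varying family of flat $\mathsf{U}(1)$-connections depend continuously on parameters, so continuity of $\kappa_{\underline{K}_w}$ follows. Once this is in place, the whole corollary reduces to Proposition \ref{prop:sigma-invar_monodromy} together with connectedness of $W$.
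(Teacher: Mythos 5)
Your argument is correct and matches the paper's intended reasoning: the corollary is deduced directly from Proposition \ref{prop:sigma-invar_monodromy}, since $\sigma$-invariance forces the monodromy into the discrete group $\{\pm 1\}$, so it cannot vary over the connected polydisc $W$, and multiplicativity of monodromy then handles both the change of reference bundle $\underline{M}$ and the case of a $\sigma$-invariant $\underline{\calL}$. Your continuity discussion is a sound way to make the constancy rigorous, though it can be shortened by noting that a $\sigma$-invariant element of the Prym is $2$-torsion, so the family $\underline{K}$ is a holomorphic section of a finite \'etale group scheme over $W$ and hence constant.
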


\begin{lemma}\label{lemm:exist_GM_line bundle}
  Let $\calL \in \calP_V$. Then there exists a Gauss-Manin holomorphic line bundle $\underline{\calL}$ on $\wt{S}_W$, such that $\underline{\calL}_{q}=\calL$ and $\underline{\calL} \in \calP_{\calV}$. The Gauss-Manin line bundle $\underline{\calL}$ is uniquely defined up to holomorphic isomorphism.
\end{lemma}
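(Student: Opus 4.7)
The plan is to pass through the $\sigma$-invariant reference $\underline{M}$ and reduce to constructing a flat family in the untwisted Prym over a contractible base. Set $N:=\calL\otimes \underline{M}_q^{-1}\in \Prym(\wt{S}_q)$, with monodromy character $\kappa_N\colon H_1(\wt{S}_q^\circ,\mathbb{Z})\to\mathsf{U}(1)$. Because $\underline{M}$ is itself Gauss-Manin by the preceding corollary, it suffices to produce a Gauss-Manin line bundle $\underline{N}$ on $\wt{S}_W$ with $\underline{N}_q = N$; then $\underline{\calL}:=\underline{N}\otimes\underline{M}$ has all the required properties.

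For existence, I shrink $W$ so that $\wt{S}_W \to W$ is a smooth fibration (possible since $q$ is a smooth point of $\calB_\bp$) and the integral divisors $Z_i$ are disjoint holomorphic sections. Then $\wt{S}_W^\circ \to W$ is topologically locally trivial, and contractibility of $W$ gives $\pi_1(\wt{S}_W^\circ)\cong \pi_1(\wt{S}_q^\circ)$. Transport $\kappa_N$ to a character of $\pi_1(\wt{S}_W^\circ)$ via this isomorphism, and form the associated flat unitary line bundle $(\underline{N}^\circ,\nabla)$ on $\wt{S}_W^\circ$. The $(0,1)$-part of $\nabla$ endows $\underline{N}^\circ$ with a holomorphic structure whose restriction to each fiber $\wt{S}_w^\circ$ is the flat unitary line bundle of monodromy $\kappa_N$, i.e.\ the desired element of $\Prym(\wt{S}_w)$. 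Finally, I extend $\underline{N}^\circ$ across $\calZ$ using the parabolic weights $\chi_\msV$: locally near each section $Z_i$ with transverse coordinate $w$, the canonical extension frame of prescribed norm growth along $Z_i$ varies holomorphically in the base because $Z_i$ is a holomorphic section and $\chi_\msV$ is locally constant along the family.

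For uniqueness, let $\underline{\calL}_1,\underline{\calL}_2$ be two Gauss-Manin extensions, and set $\underline{N}_i:=\underline{\calL}_i\otimes\underline{M}^{-1}$; each has constant monodromy equal to $\kappa_N$. Under $\pi_1(\wt{S}_W^\circ)\cong\pi_1(\wt{S}_q^\circ)$ they define the same character, so there is an isomorphism $\underline{N}_1|_{\wt{S}_W^\circ}\cong\underline{N}_2|_{\wt{S}_W^\circ}$ of flat unitary and hence holomorphic line bundles. Matching the canonical frames at the $Z_i$ (both sides carry equal parabolic weights) extends this isomorphism holomorphically across $\calZ$, and tensoring by the identity of $\underline{M}$ gives $\underline{\calL}_1\cong\underline{\calL}_2$.

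The main obstacle, in my view, is verifying that the parabolic extension across the moving divisor $\calZ$ yields a genuine holomorphic line bundle on $\wt{S}_W$ rather than only a smooth family of holomorphic bundles on the fibers. This reduces to constructing the canonical extension frame near each $Z_i$ in a holomorphically varying way: once a transverse coordinate and the weight are chosen consistently along the holomorphic section $Z_i$, holomorphy in the base follows from writing out the Chern connection in this frame. All other steps in both existence and uniqueness are essentially functorial once the character of $\pi_1(\wt{S}_W^\circ)$ is fixed.
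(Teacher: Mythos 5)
Your overall architecture coincides with the paper's: reduce to the untwisted Prym by setting $N=\calL\otimes\underline{M}_q^{-1}$, transport the unitary monodromy character through the homotopy equivalence of the family, realize it as a flat unitary line bundle on the total space whose $(0,1)$-part defines the holomorphic structure, and tensor back with $\underline{M}$. The paper, however, does all of this on the \emph{compact} curves: $N$ is a degree-zero holomorphic line bundle on $\wt{S}_q$, so its flat unitary structure gives a character of $\pi_1(\wt{S}_q)$ itself; since $\wt{S}_W$ is homotopy-equivalent to $\wt{S}_q$, the associated flat bundle already lives on all of $\wt{S}_W$ --- no punctures, no extension across $\calZ$, and the ``main obstacle'' you identify simply does not arise. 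Your detour through $\wt{S}_q^\circ$ is where the one real error enters: you extend $\underline{N}^\circ$ across $\calZ$ using the parabolic weights $\chi_\msV$, but those are the weights of $\calL$ (equivalently of $\underline{M}$), not of the quotient $N=\calL\otimes\underline{M}_q^{-1}$, whose weights cancel and whose monodromy around each point of $\tilde{\pi}^{-1}Z(q)$ is trivial. The correct extension of $\underline{N}^\circ$ is the bounded, weight-zero one; extending with $\chi_\msV$ yields $N$ twisted by a divisor supported on $\tilde{\pi}^{-1}Z(q)$, so the resulting family would not restrict to $N$ (hence $\underline{\calL}_q$ would not be $\calL$). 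The same slip propagates into your uniqueness argument, where you match ``canonical frames at the $Z_i$'' for the wrong weights; on the compact curve, uniqueness is immediate because two Gauss-Manin families give the same character of $\pi_1(\wt{S}_W)$ and are therefore isomorphic as flat, hence as holomorphic, bundles (the paper defers this to Mochizuki).

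A second, genuine omission: you assert without argument that each fiber $\underline{N}_w$ lands in $\Prym(\wt{S}_w)$, i.e.\ that $\sigma^*\underline{N}_w\otimes\underline{N}_w\cong\calO$, which is what is needed to conclude $\underline{\calL}\in\calP_\calV$ rather than merely $\underline{\calL}_w\in\mathsf{Jac}(\wt{S}_w)$. The paper checks this explicitly: the Prym condition is the vanishing of the character on the $\sigma$-symmetric cycles, and since the transported character is locally constant and the identification of $H_1(\wt{S}_w,\mathbb{Z})$ with $H_1(\wt{S}_q,\mathbb{Z})$ commutes with $\sigma$, the condition is preserved along $W$. With these two repairs --- work on the compact curve (or use weight zero at the punctures) and verify the anti-invariance of the transported character --- your argument closes and is essentially the paper's proof.
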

\begin{proof}
  $\calL \otimes \underline{M}_{q}^{-1} \in \Prym(\wt{S}_q)$ is a holomorphic line bundle of degree zero on $\wt{S}_{q}$. Hence, it defines a monodromy representation $\widehat{\kappa}_\calL: \pi_1(\wt{S}_{q}) \to \mathsf{U}(1)$. As $\wt{S}_W$ is homotopy-equivalent to $\wt{S}_{q}$, it induces a representation $\widehat{\kappa}_{\underline{\calL}}: \pi_1(\wt{S}_{W}) \to \mathsf{U}(1)$. Let $(\widehat{\underline{\calL}},\widehat{\nabla})$ denote the corresponding unitary flat bundle on $\wt{S}_{W}$. The Prym condition is given by integrating to zero over the $\sigma$-symmetric cycles on $\wt{S}_q$ and is therefore preserved by the Gauss-Manin connection. Therefore, $\underline{\calL}=\widehat{\underline{\calL}}\otimes \underline{M} \in \calP_\calV$ is a holomorphic line bundle on $\wt{S}_W$ that is Gauss-Manin. For uniqueness see \cite[Lemma 3.9]{mochizuki2023asymptotic2}.
\end{proof}

Lemma \ref{lemm:monodromy_away_from_zeroes} indicates that we can check the condition of being Gauss-Manin on the complement of the preimages of the zeros of $q$ as along as we can glue in the singular Hermitian metric at $\tilde{\pi}^{-1}Z(q)$. To make this precise let us fix a suitable open cover of $\wt{S}_W$.
\par
For each $p \in Z(q)$ let $U_p' \subset U_p$ be two nested simply connected open neighborhood of $p$, such that $U'_p$ is relatively compact in $U_p$, $U_{p_1} \cap U_{p_2}=\varnothing$ for $p_1 \neq p_2 \in Z(q)$  and $Z(w) \subset \bigcup_p U'_p$ for each $w \in W$. Let
\begin{align*}
%\begin{tabular}{ll}
  &\Sigma_0=\Sigma \setminus \bigcup_{p \in Z(q)}\overline{U}'_p, \quad &(\wt{S}_W)_0= \tilde{\pi}_W^{-1} (\Sigma_0\times W), \\
  &(\wt{S}_W)_{\tilde{p}}= \tilde{\pi}_W^{-1}(U_p \times W), \quad  &(\wt{S}_W)_{0,{\tilde{p}}}=(\wt{S}_W)_{0}\cap (\wt{S}_W)_{{\tilde{p}}}.
%\end{tabular}
\end{align*}
We obtain a biholomorphism $r: (\wt{S}_W)_0 \to W \times (\wt{S}_{q})_0$ by using the covering $\tilde{\pi}: (\wt{S}_w)_0 \to (\Sigma)_0$ for each $w$. Then we have

\begin{proposition}\label{prop:charc_of_GM_line_bundle}
  Let $\underline{\calL} \in \calP_{\mathcal{V}}$ be a holomorphic line bundle on $\wt{S}_W$ and $\calL=\underline{\calL}_q$. Let $h_{\calL}$ be the singular flat metric on $\calL$. Then $\underline{\calL}$ is Gauss-Manin if and only if there exists a holomorphic isomorphism $\phi_0: \underline{\calL}|_{(\wt{S}_W)_0} \cong r^*\calL$ and a non-degenerate symmetric pairing $C_{\tilde{p}}: \underline{\calL}|_{(\wt{S}_W)_{\tilde{p}}} \otimes \underline{\calL}|_{(\wt{S}_W)_{\tilde{p}}} \to \calO(\Lambda-\mathcal{V})$, such that
  \begin{enumerate}\itemsep0em
    \item  $C_{\tilde{p}}$ is compatible with $\phi_0$, i.e. $\phi_0^*C_{\tilde{p}}|_{(\wt{S}_{q})_{0,\tilde{p}}}=C_{\tilde{p}}|_{(\wt{S}_{W})_{0,\tilde{p}}}$.
    \item $C_{\tilde{p}}|_{(\wt{S}_{q})_{\tilde{p}}}$ is compatible with the singular flat metric $h_{\calL}|_{(\wt{S}_{q})_{\tilde{p}}}$.
    \end{enumerate}
\end{proposition}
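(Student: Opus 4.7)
The plan is to interpret both sides of the equivalence as two encodings of the same piece of data: a family of singular flat Hermitian metrics on $\underline{\calL}$ extending $h_\calL$ whose monodromy is locally constant in $w$. The translation between the two encodings is mediated by the $\sigma$-invariant reference bundle $\underline{M}$ and its symmetric pairing $C_M$ produced by Proposition \ref{prop:sigma-invar_monodromy}.

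For the direction $(\Rightarrow)$, I will invoke Lemma \ref{lemm:exist_GM_line bundle} to write $\underline{\calL}\cong\widehat{\underline{\calL}}\otimes\underline{M}$, where $\widehat{\underline{\calL}}$ is a unitary flat line bundle whose monodromy is pulled back from $\widehat{\kappa}_\calL$ along the canonical identification $\pi_1(\wt{S}_W)\cong\pi_1(\wt{S}_q)$. On $(\wt{S}_W)_0$, which avoids the supports of $\calV$ and of $\Lambda$, the biholomorphism $r$ identifies $\widehat{\underline{\calL}}\rest_{(\wt{S}_W)_0}$ with $r^*(\widehat{\calL}\rest_{(\wt{S}_q)_0})$, and $\underline{M}\rest_{(\wt{S}_W)_0}$ is canonically identified with $r^*(\underline{M}_q\rest_{(\wt{S}_q)_0})$ since it is built from divisors pulled back from $W$; tensoring these two identifications delivers $\phi_0$. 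The pairing $C_{\tilde{p}}$ is then built on each small neighborhood $(\wt{S}_W)_{\tilde{p}}$ by combining $C_M$ with a trivialization of the holomorphic line bundle $\widehat{\underline{\calL}}^{\otimes 2}\rest_{(\wt{S}_W)_{\tilde{p}}}$, which exists since $(\wt{S}_W)_{\tilde{p}}$ is a union of polydiscs; condition (2) is arranged by normalizing the trivialization so that the induced norm reproduces $h_\calL$ on the central fiber, using that $h_\calL$ itself was built from $|C_{\underline{M}_q}|$ weighted by the unitary norm of $\widehat{\calL}$.

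For the direction $(\Leftarrow)$, I will use $\phi_0$ and $C_{\tilde{p}}$ to assemble a singular Hermitian metric $h_{\underline{\calL}}$ on the total family by declaring $h_{\underline{\calL}}\rest_{(\wt{S}_W)_0}:=\phi_0^*r^*h_\calL$ on the complement of the zeros and $h_{\underline{\calL}}\rest_{(\wt{S}_W)_{\tilde{p}}}(s,s):=|C_{\tilde{p}}(s,s)|$ near each $\tilde{p}$, where the absolute value refers to the canonical norm on $\calO(\Lambda-\calV)$ induced by its defining divisor. Conditions (1) and (2) are precisely what is needed for these two local definitions to agree on $(\wt{S}_W)_{0,\tilde{p}}$ and to reproduce $h_\calL$ on the central fiber. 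The Chern connection of $h_{\underline{\calL}}$ on $(\wt{S}_W)_0$ is then flat and, via $r$, its monodromy along $\pi_1((\wt{S}_q)_0)$ is independent of $w$; combined with the fixed parabolic monodromy $(-1)^{2\alpha_{\tilde{p}}}$ around each puncture from the proof of Lemma \ref{lemm:monodromy_away_from_zeroes}, this exhibits $\kappa_{\underline{\calL}_w\otimes\underline{M}_w^{-1}}$ as constant in $w$, so $\underline{\calL}$ is Gauss-Manin.

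The main obstacle I anticipate is the overlap analysis on $(\wt{S}_W)_{0,\tilde{p}}$: one must verify that the singular asymptotics forced by $|C_{\tilde{p}}(s,s)|$ at $\tilde{p}$ match the prescribed behavior $|w|^{\tilde{\pi}^*V_{\tilde{p}}-\Lambda_{\tilde{p}}}$ of $h_\calL$, equivalently that the canonical norm on $\calO(\Lambda-\calV)$ at $\tilde{p}$ realizes exactly the weight function $\chi_\msV$. This is a local computation at each $\tilde{p}$ that crucially uses condition (2) and the specific twist $\Lambda-\calV$ dictated by the Prym normalization; once it is carried out, both directions reduce to the monodromy characterization of Lemma \ref{lemm:monodromy_away_from_zeroes}.
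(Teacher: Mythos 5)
Your argument for the implication the paper actually writes out --- that the existence of $\phi_0$ and $C_{\tilde{p}}$ satisfying (1) and (2) forces $\underline{\calL}$ to be Gauss--Manin --- is the same as the paper's: glue $\phi_0^*r^*h_{\calL}$ to the metrics $\vert C_{\tilde{p}}(\cdot,\cdot)\vert$ using (1) and (2), observe that the resulting singular flat metric has $w$-independent monodromy on $(\wt{S}_w)_0$, and conclude via Lemma \ref{lemm:monodromy_away_from_zeroes}, the fixed holonomy $(-1)^{2\alpha_{\tilde{p}}}$ around the punctures, and the fact that the $\{\pm 1\}$-valued monodromy of $\underline{M}_w$ is locally constant. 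For the converse the paper simply defers to \cite[Section~3.2.6]{mochizuki2023asymptotic2}, whereas you reconstruct it from the decomposition $\underline{\calL}\cong\widehat{\underline{\calL}}\otimes\underline{M}$ supplied by Lemma \ref{lemm:exist_GM_line bundle} and its uniqueness statement; this is a legitimate, self-contained route and is essentially what the cited argument does. One detail you should make explicit there: the trivialization of $\widehat{\underline{\calL}}^{\otimes 2}$ on $(\wt{S}_W)_{\tilde{p}}$ must be taken parallel for the flat unitary connection, not merely holomorphic and not merely normalized on the central fiber; otherwise compatibility (1) with $\phi_0$ --- which is itself defined through the flat structure --- can fail in the $W$-direction on the overlap $(\wt{S}_W)_{0,\tilde{p}}$. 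Finally, the ``main obstacle'' you flag dissolves by construction: since $C_{\tilde{p}}$ is a non-degenerate pairing valued in $\calO(\Lambda-\calV)$, the metric $\vert C_{\tilde{p}}(s,s)\vert$ automatically carries the weight $\vert w\vert^{\tilde{\pi}^*V_{\tilde{p}}-\Lambda_{\tilde{p}}}$ prescribed in Section \ref{ssec:sing_flat_metric}, which is exactly how the metric in Proposition \ref{prop:sigma-invar_monodromy} is normalized.
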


\begin{proof}
  We only prove the condition for being Gauss-Manin here. The other direction follows as in \cite[Section~3.2.6]{mochizuki2023asymptotic2}.
  We define a Hermitian metric on $\underline{\calL}|_{(\wt{S}_W)_0}$ by $h_0=\phi^*_0h_{\underline{\calL}_{q}}$. Let $h_{\tilde{p}}$ denote the Hermitian metric on $\underline{\calL}|_{(\wt{S}_W)_{\tilde{p}}}$ compatible with the symmetric pairing (see proof of Proposition \ref{prop:sigma-invar_monodromy}). Then by the two conditions $h_0$ and $h_{\tilde{p}}$ for each ${\tilde{p}}$ glue to a singular flat metric $h_{\underline{\calL}}$ on $\underline{\calL}$ compatible with the parabolic structure defined in Section \ref{prop:sigma-invar_monodromy}. However, by definition, the monodromy of $h_{\underline{\calL}}$ on $(\wt{S}_w)_0$ is independent of $w \in W$. Hence, it is a Gauss-Manin family by Lemma \ref{lemm:monodromy_away_from_zeroes}.
\end{proof}

\subsection{Gauss-Manin families of Higgs bundles} Now, we describe what it means for a family of Higgs bundles in the Hitchin fibers over $\calB_{\bp}$ to correspond to a Gauss-Manin family of line bundles. Here we will make use of the parametrization of singular Hitchin fibers in terms of semi-abelian spectral data as explained in Section \ref{subsec:stratification}.
\par
Note that a choice of a symmetric pairing at the preimage of the zeros of $q$ trivializes the semi-abelian spectral data. To see this recall that the $u$-coordinates are invariant upon multiplying the chosen section at the preimage of a zero of the quadratic differential by a constant. Given the symmetric pairing, there is a local section $s$ of $\calL$ at $\tilde{p}$ such that $C_{\tilde{p}}(s,s)=z^{\Lambda_{\tilde{p}}-V_{\tilde{p}}}$, unique up to $\pm 1$. Therefore, we obtain a splitting of the fiber bundle $\mathsf{Ab}: \calT_V \to \calP_V$, that is an isomorphism
\[ \calT_V \cong \calP_V \times (\mathbb{C}^*)^{k_1}\times \CC^{k_2}\,,
  \]
where $k_1,k_2$ are defined in Theorem \ref{thm_stratification_fibration}.
\begin{definition} Let $W \subset \calB_{\bp}$ be a polydisc.
  Let $\underline{(\calE,\varphi)} \to W \times \Sigma$ be a family of Higgs bundles with $\calH\underline{(\calE,\varphi)} \subset W$ and holomorphically varying Higgs divisor $\mathcal{V}$. Let $(\underline{\calL},\eta)$ be the associated family of semi-abelian spectral data. Then $\underline{(\calE,\varphi)}$ is a Gauss-Manin family if $\underline{\calL}$ is a Gauss-Manin family of line bundles and the Hecke modification $\eta$ is constant with respect to the above splitting.
\end{definition}

From Lemma \ref{lemm:exist_GM_line bundle} we immediately conclude
\begin{lemma}\label{lemm:existence_of_GM_Higgs_bundles}
  Let $q \in \calB_{\bp}$ be a smooth point given by a quadratic differential with at least one zero of odd order, $(\calE,\varphi) \in \calH^{-1}(q)$ and let $W \subset \calB_{\bp}$ be a polydisc centered at $q$. Then there exists a Gauss-Manin family $\underline{(\calE,\varphi)}$ on $\Sigma \times \calW$ with $\underline{(\calE,\varphi)}_{q}=(\calE,\varphi)$. It is unique up to isomorphism.
\end{lemma}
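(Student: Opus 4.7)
The plan is to reduce the statement to Lemma \ref{lemm:exist_GM_line bundle} via the splitting $\calT_\calV \cong \calP_\calV \times (\CC^*)^{k_1}\times \CC^{k_2}$ induced by a choice of $\sigma$-invariant line bundle $\underline{M}$, and then transport the conclusion back to Higgs bundles by inverting the semi-abelian spectral correspondence.

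First, I would record the semi-abelian spectral data $(\calL,\eta)$ of $(\calE,\varphi)$, with $\calL\in\calP_V$ and $\eta\in(\CC^*)^{k_1}\times\CC^{k_2}$ the Hecke datum expressed in the trivialization coming from some symmetric pairing on $\calL$. Shrinking $W$ if needed, the divisor $\calV$ extends holomorphically over $W$, and because $q$ has a zero of odd order the hypotheses of Lemma \ref{lem:exist_sigma_invariant_pointwise} hold throughout $W$, so there is a $\sigma$-invariant family $\underline{M}\in\wt{\mathcal{T}}_{\calV}$ on $\wt{S}_W$ and the constructions of the preceding subsection apply.

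Second, Lemma \ref{lemm:exist_GM_line bundle} yields a Gauss-Manin line bundle $\underline{\calL}\in\calP_{\calV}$ on $\wt{S}_W$ with $\underline{\calL}_q\cong\calL$, unique up to isomorphism. Applying Proposition \ref{prop:charc_of_GM_line_bundle} to $\underline{\calL}$ equips it, at each preimage $\tilde{p}$ of a zero of $w\in W$, with a non-degenerate symmetric pairing $C_{\tilde{p}}$ compatible with the singular flat metric and varying holomorphically in $w$. These pairings give the trivialization of $\mathsf{Ab}:\calT_{\calV}\to\calP_{\calV}$ over $W$ discussed in Section~4.3, in which we take the Hecke datum $\underline{\eta}$ to be the constant section with value $\eta$. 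Applying the inverse of the semi-abelian spectral correspondence (in families, cf.~Theorem \ref{thm_stratification_fibration}) to $(\underline{\calL},\underline{\eta})$ produces a holomorphic family of Higgs bundles $\underline{(\calE,\varphi)}$ on $\Sigma\times W$ with Higgs divisor $\calV$ and $\underline{(\calE,\varphi)}_q\cong(\calE,\varphi)$, which is Gauss-Manin by construction.

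For uniqueness, any other such family $\underline{(\calE,\varphi)}'$ has semi-abelian spectral data $(\underline{\calL}',\underline{\eta}')$ with $\underline{\calL}'$ Gauss-Manin and $\underline{\eta}'$ constant in the trivialization. The line bundle part matches $\underline{\calL}$ up to isomorphism by Lemma \ref{lemm:exist_GM_line bundle}, and the coincidence at $q$ then forces $\underline{\eta}'=\underline{\eta}$. The main obstacle I expect is the verification that the pairings $C_{\tilde{p}}$ supplied pointwise by Proposition \ref{prop:charc_of_GM_line_bundle} actually assemble into a holomorphic trivialization of $\calT_{\calV}\to\calP_{\calV}$ over $W$, which amounts to running the ``other direction'' of that proposition—deferred to \cite{mochizuki2023asymptotic2}—in families; once this is in hand, the holomorphy of the inverse spectral construction in $w$ is routine.
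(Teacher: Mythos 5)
Your proposal is correct and follows essentially the same route as the paper, which gives no separate argument but deduces the lemma ``immediately'' from Lemma \ref{lemm:exist_GM_line bundle}: take the Gauss-Manin line bundle family through $\calL$, keep the Hecke datum constant in the symmetric-pairing trivialization of $\mathsf{Ab}:\calT_{\calV}\to\calP_{\calV}$, and invert the spectral correspondence. Your closing remark correctly isolates the one point the paper leaves implicit, namely that the pairings $C_{\tilde{p}}$ vary holomorphically over $W$ so that this trivialization exists in families.
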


  \begin{proposition}[Mochizuki's horizontal criterion]\label{lemm:characterization_Gauss_Manin_Higgs_bundles}
    Let $r: \Sigma_0 \times W \to \Sigma_0 \times \{q\}$ denote the projection.
    A family $\underline{(\calE,\varphi)}$ of locally fiducial Higgs bundles is  Gauss-Manin if and only if there exists a holomorphic isomorphism $\psi_0: \underline{\calE}|_{\Sigma_0 \times W} \cong r^*\underline{\calE}_{\Sigma_0 \times\{ q\}}$ and a non-degenerate symmetric pairing $C_{\tilde{p}}$ inducing an orthogonal structure on $(\underline{\calE},\varphi)|_{\Sigma_{\tilde{p}}}$, such that
    \begin{enumerate}
    \item  The isomorphism $\psi_0$ is induced by an isomorphism of the abelian part of the spectral data.
    \item  $C_{\tilde{p}}$ is compatible with $\psi_0$, i.e. $\psi_0^*C_{\tilde{p}}|_{\Sigma_{0,{\tilde{p}}} \times \{q\}}=C_{\tilde{p}}|_{{\Sigma_{0,{\tilde{p}}} \times W}}$.
    \item $C_{\tilde{p}}|_{\Sigma_{\tilde{p}} \times \{q\}}$ is compatible with the limiting configuration on $(\underline{\calE},\varphi)_{q}$.
    \end{enumerate}
  \end{proposition}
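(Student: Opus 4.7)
The plan is to reduce this proposition to its spectral counterpart, Proposition \ref{prop:charc_of_GM_line_bundle}, via the semi-abelian spectral correspondence. By definition, $\underline{(\calE,\varphi)}$ is Gauss-Manin if and only if the spectral line bundle $\underline{\calL}$ is Gauss-Manin and the Hecke modification $\eta$ is locally constant in the splitting $\calT_V \cong \calP_V \times (\mathbb{C}^*)^{k_1}\times \CC^{k_2}$ determined by the chosen symmetric pairings at the preimages of the zeros. I therefore need to show that the existence of $(\psi_0, C_{\tilde{p}})$ with properties (1)--(3) encodes precisely these two pieces of data.

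Away from the zeros of $q$ the spectral cover $\tilde{\pi}$ is étale, so the pushforward $(\tilde{\pi}_W)_*$ identifies holomorphic line bundles on $(\wt{S}_W)_0$ with those Higgs bundles on $\Sigma_0 \times W$ whose abelian spectral data is the given line bundle. Condition (1) thus says exactly that $\psi_0$ descends from a holomorphic isomorphism $\phi_0 : \underline{\calL}|_{(\wt{S}_W)_0} \cong r^*\calL$ in the sense of Proposition \ref{prop:charc_of_GM_line_bundle}. Near each $\tilde{p}$, the argument in Proposition \ref{prop:sigma-invar_monodromy} shows that an orthogonal structure on $(\underline{\calE},\varphi)|_{\Sigma_{\tilde{p}}}$ is pushed forward from a non-degenerate symmetric pairing on $\underline{\calL}|_{(\wt{S}_W)_{\tilde{p}}}$ valued in $\calO(\Lambda-\mathcal{V})$. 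Under this translation, conditions (2) and (3) become precisely conditions (1) and (2) of Proposition \ref{prop:charc_of_GM_line_bundle}, so their existence is equivalent to $\underline{\calL}$ being Gauss-Manin.

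The hard part will be to verify that the same data simultaneously controls the Hecke modification $\eta$. The key point, recalled just before the definition of a Gauss-Manin family of Higgs bundles, is that $C_{\tilde{p}}$ singles out, up to a sign, a local section $s$ of $\underline{\calL}$ near $\tilde{p}$ with $C_{\tilde{p}}(s,s)=z^{\Lambda_{\tilde{p}}-V_{\tilde{p}}}$, and the $u$-coordinates of $\eta$ in the frame $s$ provide the trivialization $W \to (\mathbb{C}^*)^{k_1}\times \CC^{k_2}$. Compatibility of $C_{\tilde{p}}$ with $\psi_0$ on $\Sigma_{0,\tilde{p}} \times W$ (condition (2)) forces the family of sections $s$ to be constant through $\phi_0$ and $r$, while condition (3) fixes its value at $q$; together these yield local constancy of $\eta$. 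The reverse direction amounts to gluing $\phi_0$ and a metric-compatible $C_{\tilde{p}}$ into a global Higgs structure, mirroring the final step in the proof of Proposition \ref{prop:charc_of_GM_line_bundle} and the argument of \cite[Section~3.2.6]{mochizuki2023asymptotic2}.
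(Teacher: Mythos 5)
Your overall strategy is the same as the paper's: translate the statement into Proposition \ref{prop:charc_of_GM_line_bundle} through the spectral correspondence, matching $\psi_0$ with $\phi_0$ away from $Z(q)$ and matching the orthogonal structures near the zeros. The translation of conditions (2)--(3) into conditions (1)--(2) of the line-bundle criterion is also correct. However, there are two concrete gaps in how you carry this out.

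First, the passage from a symmetric pairing on $\underline{\calL}$ near $\tilde{\pi}^{-1}(p)$ to a non-degenerate orthogonal structure on $(\underline{\calE},\varphi)|_{U_p}$ is the technical heart of the proof, and Proposition \ref{prop:sigma-invar_monodromy} does not supply it: that proposition only extracts a Hermitian metric from a pairing on a single line bundle. What is actually needed is the norm construction $\widehat{C}_p(s_1,s_2)=\mathsf{nm}(C_{\tilde{\pi}^{-1}p}(s_1,s_2))$ on the rank-two pushforward, together with the sign convention $C_{{\tilde{p}}_+}(s_+,s_+)=-C_{{\tilde{p}}_-}(s_-,s_-)$ at even-order zeros (where $\tilde{\pi}^{-1}(p)$ is two points); without that sign the resulting pairing is not anti-diagonal in the frame where $\varphi$ is, and is not compatible with the limiting configuration. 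One must then check explicitly that this pairing, which is only $\calO(\Lambda-\calV)$-valued on $\tilde{\pi}_*\underline{\calL}$, descends through the Hecke modification $\eta=\diag(1,z^{\lfloor \mathrm{ord}_p(q)/2\rfloor-V_p})$ to a pairing that is genuinely non-degenerate on $\calE$. Second, your argument that conditions (2)--(3) "yield local constancy of $\eta$" does not follow as stated: compatibility of $C_{\tilde{p}}$ with $\psi_0$ pins down the distinguished frame $s$ (hence the trivialization of the fiber of $\mathsf{Ab}$), but constancy of the frame does not by itself force the $u$-coordinates of a varying $\eta_w$ to be constant. The paper closes this by invoking the hypothesis that the family is \emph{locally fiducial} -- which fixes $\eta$ to be the standard diagonal modification in the distinguished frames -- and this hypothesis never appears in your proposal, although the proposition is only stated under it.
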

\begin{proof}
  This is a translation of Proposition \ref{prop:charc_of_GM_line_bundle} via the spectral correspondence. Recall from Section \ref{subsec:stratification} that we obtain $(\calE,\varphi,H_\infty)$ from $(\calL,h_\calL)$ by a Hecke modification of Higgs bundles
  \[ \eta: (\calE,\varphi) \to \tilde{\pi}_*(\calL,\lambda)
  \] at $\calZ$. From Proposition \ref{prop:charc_of_GM_line_bundle} we obtain an isomorphism $\phi_0: \underline{\calL}|_{(\wt{S}_W)_0} \cong r^*\underline{\calL}_{b_0}$. It induces an isomorphism $\tilde{\pi}_*\underline{\calL}|_{\Sigma_0 \times W} \cong r^*\tilde{\pi}_*\underline{\calL}_{\Sigma_0\times q}$. Since the Hecke modification is an isomorphism on $\Sigma^\circ$, it descends to an isomorphism $\psi_0:\underline{\calE}|_{\Sigma_0 \times W} \cong r^*\underline{\calE}_{\Sigma_0 \times\{ q\}}$.
  % Note that from the local symmetric pairing $C_{\tilde{p}}: \underline{L}|_{(\wt{S}_W)_{\tilde{p}}} \otimes \underline{L}|_{(\wt{S}_W)_{\tilde{p}}} \to \calO(\Lambda-\mathcal{V})$ together with the Prym condition $\underline{L} \otimes \underline{L} \cong \calO(\Lambda-\mathcal{V})$, we a local isomorphism $\underline{L}|_{(\wt{S}_W)_{\tilde{p}}} \cong \sigma^*\underline{L}|_{(\wt{S}_W)_{\tilde{p}}}$.
 \par  Further, note that in Proposition \ref{prop:charc_of_GM_line_bundle} the symmetric pairing can be multiplied by a $c_{\tilde{p}} \in U(1)$ without changing the conclusion. Let $p \in Z(q)$ be an even order zero and $\{{\tilde{p}}_+,{\tilde{p}}_-\}= \tilde{\pi}^{-1}p$. We can choose frames $s_+, s_-$ and symmetric pairings $C_{{\tilde{p}}_{\pm}}$ compatible with the Hermitian metric such that $C_{{\tilde{p}}_{+}}(s_+,s_+)=-C_{{\tilde{p}}_{-}}(s_-,s_-)$. Under this assumption, there is an induced symmetric pairing on $\tilde{\pi}_*\underline{\calL}$ defined as follows. Let $U \subset \Sigma$ be open. We can associate to a function $f \in \calO_{\tilde{\pi}^{-1}U}$ the function $\mathsf{nm}f=\Tr(F_f)$ on $U$, where $F_f \in \End(\tilde{\pi}_*\calO_{\tilde{\pi}^{-1}U})$ is the multiplication by $f$. Given local sections $s_1,s_2$ of $\tilde{\pi}_*\underline{\calL}\rest_U=\underline{\calL}\rest_{\tilde{\pi}^{-1}U}$ in a neighborhood $U$ of a zero $p$ we define $\widehat{C}_p(s_1,s_2)=\mathsf{nm}( C_{\tilde{\pi}^{-1}p}(s_1,s_2))$.  This defines a symmetric pairing on the pushforward $\tilde{\pi}_*(\calL)\rest_{U_p}$ for each $p$, such that the induced solution to the decoupled Hitchin equations is compatible. Locally at $p$, the pushforward Higgs field, the symmetric pairing, and the pushforward of the Hermitian metric are given as follows
  \begin{align*} &\tilde{\pi}_*\tilde{\lambda}=\begin{pmatrix} 0 & z^{\lfloor \frac{\mathsf{ord}_p(q)+1}{2}  \rfloor}  \\  z^{\lfloor \frac{\mathsf{ord}_p(q)}{2}  \rfloor} & 0 \end{pmatrix},\quad \widehat{C}_p=\begin{pmatrix} 0 & z^{-\lfloor \frac{\mathsf{ord}_p(q)}{2}  \rfloor+V_p} \\ z^{-\lfloor \frac{\mathsf{ord}_p(q)}{2}  \rfloor+V_p} &0 \end{pmatrix}, \\
    &\tilde{\pi}_*h_\calL=\diag( \vert z \vert^{-\lfloor \frac{\mathsf{ord}_p(q)+1}{2}  \rfloor+V_p}, \vert z \vert^{-\lfloor \frac{\mathsf{ord}_p(q)}{2}  \rfloor+V_p} ).
  \end{align*} As $(\mathcal{E},\varphi)$ is locally fiducial, the Hecke modification is given by $\eta=\diag(1, z^{\lfloor \frac{\mathsf{ord}_p(q)}{2}  \rfloor-V_p})$. Hence, $C_p$ descends to a non-degenerate symmetric pairing on $(\mathcal{E},\varphi)$ compatible with the Higgs field and the Hermitian metric.
  \par For the converse, it is clear that the isomorphism $\psi_0$ induces an isomorphism of line bundles $\phi_0: \underline{\calL}|_{(\wt{S}_W)_0} \cong r^*\underline{\calL}_{(\wt{S}_q)_0}$. Furthermore, it is easy to see that local symmetric pairings on $\calE$ induce local symmetric pairings on $\underline{\calL}$ compatible with the singular flat Hermitian metric $h_{\underline{\calL}_q}$.
\end{proof}

\subsection{Gauss-Manin infinitesimal deformations}\label{subsec:GM_infinitesimal_deform}
Recall that the tangent space at a smooth point $(\mathcal{E},\varphi)$ of $\mathcal{M}$ is given by $T_{(\mathcal{E},\varphi)}\mathcal{M}=H^1(\mathsf{Def}(\mathcal{E},\varphi))$. Let $\mathcal{A}^\bullet(\mathsf{Def}(\mathcal{E},\varphi))$ be the Dolbeault resolution of $\mathsf{Def}(\mathcal{E},\varphi)$. The space of global sections of $\mathcal{A}^j(\mathsf{Def}(\mathcal{E},\varphi))$ is denoted by $A^j(\mathsf{Def}(\mathcal{E},\varphi))$, which is the space of smooth $\End^0(\mathcal{E})$-valued $j$-forms.  An element in $ H^1(\Sigma,\mathsf{Def}(\mathcal{E},\varphi))$ can be represented by a cocycle $\tau\in A^1(\mathsf{Def}(\mathcal{E},\varphi))$, i.e., $(\dbarE+\ad\varphi)\tau=0$. Two cocycles $\tau_1,\tau_2$ represent the same cohomology class in $H^1(\Sigma,\mathsf{Def}(\mathcal{E},\varphi))$ if and only if $\tau_1-\tau_2=(\dbarE+\ad\varphi)\alpha$ for some $\alpha\in A^0(\mathsf{Def}(\mathcal{E},\varphi))$.

Let $\mathcal{M}_r$ be defined as in Definition \ref{def:Mr}. Recall that by Proposition \ref{prop:diff_of_Hit} we have a short exact sequence
\begin{equation}\label{eq:SES_tangentMr}
  0\to H^1(\Sigma,K^{-1}(V))\to T_{(\mathcal{E},\varphi)}\mathcal{M}_r\xrightarrow{d\calH}H^0(\Sigma,K^2(-V))\to 0.
\end{equation}
By Lemma \ref{lemm:cohomology_spectral} we have
\[ \pi_{\ast}\mathcal{O}_{S_V}\cong \mathcal{O}_\Sigma\oplus K^{-1}(V),\quad \pi_{\ast}K_{S_V}\cong K^2(-V)\oplus K.
\]
Then \eqref{eq:SES_tangentMr} becomes
\begin{equation}\label{eq:SES_tangentMr1}
  0\to H^1(S_V,\mathcal{O}_{S_V})^-\xrightarrow{\iota^{\mathrm{v}}} T_{(\mathcal{E},\varphi)}\mathcal{M}_r\xrightarrow{d\calH}H^0(S_V,K_{S_V})^-\to 0.
\end{equation}

\begin{lemma}[{\cite[Lemma~2.11]{mochizuki2023asymptotic2}}]
  Suppose $\tau\in A^1(\mathsf{Def}(\mathcal{E},\varphi))$ is a 1-cocycle representing a class in $T_{(\mathcal{E},\varphi)}\mathcal{M}_r\subset H^1(\Sigma,\mathsf{Def}(\mathcal{E},\varphi))$. Then $\tau$ induces $\rho(\tau)\in H^0(S_V,K_{S_V})^-$.
\end{lemma}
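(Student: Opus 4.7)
The plan is to realize $\rho$ as the cocycle-level differential of the Hitchin map. Any $\tau\in A^1(\mathsf{Def}(\calE,\varphi))$ decomposes as $\tau=\dot{\calE}+\dot{\varphi}$ with $\dot{\calE}\in A^{0,1}(\End^0(\calE))$ and $\dot{\varphi}\in A^{1,0}(\End^0(\calE))$, and the cocycle condition $(\dbarE+\ad\varphi)\tau=0$ unpacks as $\dbarE\dot{\varphi}+[\dot{\calE},\varphi]=0$. Motivated by $\calH(\calE,\varphi)=\tfrac12\Tr(\varphi^2)$, I would set
\[ \rho(\tau):=\Tr(\varphi\cdot\dot{\varphi})\in A^{2,0}(\Sigma), \]
which is the natural candidate for the cocycle-level expression of $d\calH(\tau)$.

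The first check is that $\rho(\tau)$ is $\dbar$-closed: using $\dbarE\varphi=0$ and the cocycle condition,
\[ \dbar\rho(\tau)=\Tr(\varphi\,\dbarE\dot{\varphi})=-\Tr(\varphi[\dot{\calE},\varphi])=0 \]
by cyclicity of the trace, so $\rho(\tau)$ is a holomorphic section of $K^2$. The next check is coboundary invariance: if $\tau=(\dbarE+\ad\varphi)\alpha$ with $\alpha\in A^0(\End^0(\calE))$, then $\dot{\varphi}=[\alpha,\varphi]$ and $\rho(\tau)=\Tr(\varphi[\alpha,\varphi])=0$ again by cyclicity. Hence $\rho$ descends to a well-defined map on $H^1(\Sigma,\mathsf{Def}(\calE,\varphi))$.

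To see that $\rho(\tau)$ in fact vanishes on $V$, I would appeal to the defining property of $\mathcal{M}_r$: any smooth curve $(\calE_t,\varphi_t)$ in $\mathcal{M}_r$ tangent to $\tau$ has $\det\varphi_t\in H^0(\Sigma,K^2(-V))$ for all small $t$, so differentiating at $t=0$ places $\rho(\tau)=\partial_t\det\varphi_t|_{t=0}$ inside $H^0(\Sigma,K^2(-V))$. This is really the same information already encoded in \eqref{eq:SES_tangentMr}, namely that $d\calH(\tau)\in H^0(\Sigma,K^2(-V))$. Finally, under the pushforward isomorphism $\pi_{\ast}K_{S_V}\cong K^2(-V)\oplus K$ from Lemma \ref{lemm:cohomology_spectral}, the $K^2(-V)$ summand corresponds to the $\sigma$-anti-invariant part of $\pi_{\ast}K_{S_V}$, so $\rho(\tau)$ canonically lifts to an element of $H^0(S_V,K_{S_V})^{-}$, as required.

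The only non-routine step is the vanishing on $V$, which is built into the definition of $\mathcal{M}_r$ and the compatibility in \eqref{eq:SES_tangentMr}; the remaining verifications are short computations relying on $\dbarE\varphi=0$, the cocycle condition, and cyclicity of the trace.
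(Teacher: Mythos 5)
There is a genuine gap, and it shows up in the tell-tale fact that your argument never actually uses the hypothesis $[\tau]\in T_{(\mathcal{E},\varphi)}\mathcal{M}_r$. With your definition $\rho(\tau)=\Tr(\varphi\,\dot{\varphi})$ (a smooth section of $K^{\otimes 2}$, not of $\Lambda^{2,0}$, which vanishes on a Riemann surface), the vanishing along $V$ is automatic for \emph{every} cocycle in $A^1(\mathsf{Def}(\mathcal{E},\varphi))$: by definition $V$ is the vanishing divisor of $\varphi$ as a section of $\End^0(\mathcal{E})\otimes K$, so $\varphi=s_V\varphi'$ and $\Tr(\varphi\,\dot{\varphi})=s_V\Tr(\varphi'\dot{\varphi})$ already lies in $H^0(\Sigma,K^2(-V))$ for any deformation, tangent to $\mathcal{M}_r$ or not (this is exactly why $d\calH$ maps all of $T_{(\mathcal{E},\varphi)}\mathcal{M}$ into $H^0(\Sigma,K^2(-V))$ in Proposition~\ref{prop:diff_of_Hit}). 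If the lemma reduced to this, it would be content-free. Your curve argument for the vanishing is also not correct as stated: along a path in $\mathcal{M}_r$ the Higgs divisor $V_t$ moves, so $\det\varphi_t$ lies in $H^0(\Sigma,K^2(-2V_t))$ with varying $V_t$, not in $H^0(\Sigma,K^2(-V))$ for the fixed $V$.

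The object $\rho(\tau)$ in the lemma is finer: it is the component of the $(1,0)$-part $\dot{\varphi}$ along $\coker(\ad\varphi)$, viewed on the spectral side as the ``diagonal part'' of $\dot{\varphi}$ in the eigendecomposition of $\varphi$, i.e.\ essentially $\tilde{\pi}^{*}\Tr(\varphi\,\dot{\varphi})/(2\lambda)$ as a section of $K_{S_V}$. (This is the normalization forced by the later use of the lemma: in Corollary~\ref{prop:splitting} one has $\rho(F_\nu)=\nu$ for $F_\nu$ the multiplication by $\nu$ on $\calL$, and $\rho(\tau)=0$ is imposed as a local condition in Lemma~\ref{lemm:local_exactness}.) On $S_V^{\circ}=S_V\setminus\tilde{\pi}^{-1}Z(q)$ this section is holomorphic by the cocycle condition and anti-invariant, and it is independent of the representative because the $(1,0)$-part of a coboundary, $[\varphi,\alpha]$, has no diagonal component; these are the routine steps, analogous to your trace computations. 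The actual content of the lemma is that this a priori meromorphic section extends holomorphically across $\tilde{\pi}^{-1}Z(q)$: in the local model $\varphi=\begin{pmatrix}0&z^{v}\\ z^{n-v}&0\end{pmatrix}dz$, $\dot{\varphi}=\begin{pmatrix}a&b\\ c&-a\end{pmatrix}dz$ one gets $\Tr(\varphi\dot{\varphi})/(2\lambda)=\pm\tfrac12\bigl(z^{v-n/2}c+z^{n/2-v}b\bigr)dz$, which has a pole of order $n/2-v$ unless $c$ vanishes to that order. That divisibility is precisely the vanishing of the image of $[\tau]$ in the torsion summand $H^0(V,\calO_V^2)$ of $H^0(\Sigma,\coker(\ad\varphi))$, i.e.\ the condition $[\tau]\in T_{(\mathcal{E},\varphi)}\mathcal{M}_r$. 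This extension step is the heart of the lemma and is missing from your proposal.
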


 Let $(\mathcal{E},\varphi)$ be in the lowest stratum, i.e., $V=V_{\mathsf{max}}$. In this case $S_V=\widetilde{S}_q$ and $(\mathcal{E},\varphi)\in \MM^d$, where $d=3g-3-r$. A tangent vector $[(\beta,\psi)]$ at $(\mathcal{E},\varphi)$ is called Gauss-Manin if there exists a Gauss-Manin family of Higgs bundles through $(\mathcal{E},\varphi)$ with tangent vector $[(\beta,\psi)]$. We denote by $T_{(\mathcal{E},\varphi)}\mathcal{M}^{GM}$ the space of Gauss-Manin tangent vectors.

%\begin{proposition}
%  Let $\bp=(2^r,1^{4g-4-2r})$ and $D$ the divisor of the double zeroes. Let $(\mathcal{E},\varphi)$ be an admissible Higgs bundle with Higgs divisor $V=D$.
%  The $\tau \in \mathcal{A}^1(\mathsf{Def}(\mathcal{E},\varphi))$ is Gauss-Manin infinitesimal deformation if and only if it satisfies the conditions of Proposition \ref{prop:splitting}.
%\end{proposition}
%\begin{proof}
%  By Lemma \ref{lemm:characterization_Gauss_Manin_Higgs_bundles} all Gauss-Manin deformations satisfy the conditions of Proposition \ref{prop:splitting}. By Lemma \ref{lemm:existence_of_GM_Higgs_bundles}
%\[ \dim T_{(\mathcal{E},\varphi)}\calM^{GM}= \dim \calB_{\bp}=3g-3-r.
%\] By the first condition of Corollary \ref{prop:splitting} $\iota^h$ is injective. Hence, a dimension count shows that all Gauss-Manin deformations are of this form.
%\end{proof}
%{ \color{blue} We could generlise this to other configurations of zeros if necessary. Can we say something about the HK metric on lower strata for other configuartions of zeros?}

Let $U$ be a subset of $\Sigma$ and denote $(\mathcal{E}_U,\varphi_U):=(\mathcal{E},\varphi)|_U$. Suppose $C_U$ is a non-degenerate symmetric pairing of $(\mathcal{E}_U,\varphi_U)$. Let $\Sy^0_{C_U}(\mathcal{E}_U)$ (resp. $\Asy^0_{C_U}(\mathcal{E}_U)$) denote the sheaf of trace-free holomorphic endomorphisms of $\mathcal{E}_U$ which are symmetric (resp. anti-symmetric) with respect to $C_U$. We obtain the following subcomplex $\mathsf{Def}(\mathcal{E}_U,\varphi_U;C_U)$ of $\mathsf{Def}(\mathcal{E}_U,\varphi_U)$:
\[\Asy^0_{C_U}(\mathcal{E}_U)\xrightarrow{[\varphi,\cdot]}\Sy^0_{C_U}(\mathcal{E}_U)\otimes K. \]
Let $A^\bullet (\mathsf{Def}(\mathcal{E}_U,\varphi_U;C_U))$ be the space of global sections of the Dolbeault resolution of $\mathsf{Def}(\mathcal{E}_U,\varphi_U;C_U)$. Then we have
\[A^\bullet (\mathsf{Def}(\mathcal{E}_U,\varphi_U;C_U))=\Omega^{1,0}(\Sy^0_{C_U}(\mathcal{E}_U))\oplus \Omega^{0,1}(\Asy^0_{C_U}(\mathcal{E}_U)).\]

\begin{lemma}[{\cite[Lemma~2.14]{mochizuki2023asymptotic2}}]\label{lemm:local_exactness}
Let $U$ be a non-compact connected open subset of $\Sigma$. Let $\tau\in A^1(\mathsf{Def} (\mathcal{E}_U,\varphi_U;C_U))$ be such that $(\dbarE+\ad\varphi)\tau=0$ and $\rho(\tau)=0$. Then there exists a unique section $\sigma$ of $A^0(\mathsf{Def}(\mathcal{E}_U,\varphi_U;C_U))$ such that $(\dbarE+\ad\varphi)\sigma=\tau$ and $\mathrm{Supp}\,\sigma\subset \mathrm{Supp}\,\tau$.
\end{lemma}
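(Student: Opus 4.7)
The plan is to construct $\sigma$ by locally inverting $\ad\varphi$ on the $(1,0)$-component of $\tau$, and then to use the cocycle relation to see that the resulting section automatically solves the $\dbarE$-equation as well.

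Write $\tau=(\beta,\psi)$ with $\beta\in\Omega^{0,1}(\Asy^0_{C_U}(\mathcal{E}_U))$ and $\psi\in\Omega^{1,0}(\Sy^0_{C_U}(\mathcal{E}_U))$, so the cocycle condition reads $\dbarE\psi+[\varphi,\beta]=0$. The key observation is that $\ad\varphi\colon\Asy^0_{C_U}(\mathcal{E}_U)\to\Sy^0_{C_U}(\mathcal{E}_U)\otimes K$ is injective as a sheaf morphism on all of $U$, including at the zeros of $q$: the pointwise kernel of $\ad\varphi$ on $\mathfrak{sl}(\mathcal{E})$ is always spanned by $\varphi$ itself, which is $C_U$-symmetric and hence lies in $\Sy^0$ rather than in $\Asy^0$. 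A direct calculation in the local fiducial frame shows further that this inclusion has locally free cokernel of rank one, and that the induced projection $\psi\mapsto\psi\bmod\mathrm{im}(\ad\varphi)$ reproduces the map $\rho$ of the preceding lemma, with target $K^2(-V)\cong(\pi_\ast K_{S_V})^-$.

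Assuming $\rho(\tau)=0$, $\psi$ lies fibrewise in the image of $\ad\varphi$, so on $U\setminus Z(q)$ there is a unique $\sigma\in A^0(\Asy^0_{C_U}(\mathcal{E}_U))$ with $[\varphi,\sigma]=\psi$. In fiducial coordinates at any $p\in Z(q)$, inverting $\ad\varphi$ on its image produces an explicit formula for $\sigma$ with no pole at $p$, so $\sigma$ extends smoothly across $Z(q)$. I would then apply $\dbarE$ to $[\varphi,\sigma]=\psi$ and use $\dbarE\varphi=0$ together with the cocycle condition to obtain $[\varphi,\dbarE\sigma-\beta]=0$. Since $\dbarE\sigma-\beta$ takes values in $\Asy^0_{C_U}(\mathcal{E}_U)$ and $\ad\varphi$ is injective there, this forces $\dbarE\sigma=\beta$, whence $(\dbarE+\ad\varphi)\sigma=\tau$.

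Uniqueness of $\sigma$ is immediate from the same injectivity: any two solutions differ by a section of $\ker(\ad\varphi)\cap\Asy^0=0$. For the support statement, the construction of $\sigma$ is purely local-algebraic, so $\sigma$ vanishes wherever $\psi$ does; combined with the vanishing of $\beta$ away from $\mathrm{Supp}\,\tau$, this yields $\mathrm{Supp}\,\sigma\subset\mathrm{Supp}\,\tau$. Non-compactness of $U$ is not needed for the algebraic inversion itself, but is a convenient hypothesis to rule out global holomorphic obstructions in the underlying Dolbeault complex. The main technical point in this approach will be the fiducial-frame identification of the cokernel of $\ad\varphi|_{\Asy^0}$ with $K^2(-V)$ at zeros of arbitrary order, so that the above projection really does coincide with the $\rho$ of the preceding lemma.
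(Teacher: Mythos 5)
The paper does not actually prove this lemma --- it is imported verbatim from Mochizuki --- so I am comparing your plan against the argument in the cited source. Your overall strategy is the right one and is the same as that argument: solve $[\varphi,\sigma]=\tau^{1,0}$ algebraically, deduce $\dbarE\sigma=\tau^{0,1}$ by applying $\dbarE$ to this identity and using that $\ad\varphi$ is injective on smooth sections of $\Asy^0_{C_U}(\mathcal{E}_U)$ (by density of the locus where $\varphi$ is regular and continuity), and obtain uniqueness and the support statement from the same injectivity. Those steps, and your remark that non-compactness of $U$ plays no real role, are fine.

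The gap sits exactly in the step you defer as ``the main technical point'', and your prediction for how it resolves is wrong. At a zero $p$ of order $n$ with $V_p=v\geq 1$ --- which is the relevant case on the lowest stratum for zeros of order $\geq 2$ --- the cokernel of $\ad\varphi\colon\Asy^0_{C_p}(\mathcal{E}_p)\to\Sy^0_{C_p}(\mathcal{E}_p)\otimes K$ is \emph{not} locally free. In the fiducial frame $\varphi=\left(\begin{smallmatrix}0&z^{n-v}\\ z^{v}&0\end{smallmatrix}\right)dz$ with $C_p$ anti-diagonal, $\Asy^0_{C_p}$ is spanned by $\diag(1,-1)$ and $\bigl[\varphi,f\diag(1,-1)\bigr]=2f\left(\begin{smallmatrix}0&-z^{n-v}\\ z^{v}&0\end{smallmatrix}\right)dz$, so the cokernel is $\mathcal{O}\oplus\mathcal{O}/(z^{v})$: the free rank-one summand identified with $K^2(-V)$ \emph{plus} a torsion summand of length $v$. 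Consequently, vanishing of the locally free component alone does not allow you to invert $\ad\varphi$ without poles: the cocycle $\tau=\bigl(\left(\begin{smallmatrix}0&-z^{n-2v}\\ 1&0\end{smallmatrix}\right)dz,\,0\bigr)$ satisfies $\Tr(\tau^{1,0}\varphi)=0$, yet the unique solution of $[\varphi,\sigma]=\tau^{1,0}$ is $\sigma=\tfrac12 z^{-v}\diag(1,-1)$, which is not smooth at $p$. The hypothesis $\rho(\tau)=0$ must therefore be read as the vanishing of the \emph{full} cokernel class, i.e.\ including the torsion component (equivalently: the lower-left entry of $\tau^{1,0}$ is divisible by $z^{v}$ in the smooth sense); this is an honest extra condition, since the cocycle equation only yields that $\partial_{\bar z}$ of that entry is divisible by $z^{v}$. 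Once the hypothesis is read this way, your inversion produces a smooth $\sigma$ and the remainder of your argument (the $\dbarE$ step, uniqueness, support) goes through as written; but as stated, your identification of the cokernel with $K^2(-V)$ and the ensuing ``no pole'' claim are false whenever $V_p\geq 1$.
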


By \cite[Eq.~(18)]{mochizuki2023asymptotic2}, we may regard $\coker(\ad\varphi)|_{\Sigma_0}$ as a subsheaf of $(\End^0(\mathcal{E})\otimes K)|_{\Sigma_0}$, and
$\coker(\ad\varphi)|_{\Sigma_0\cap U_p}$
as a subsheaf of
$\bigl(
\Sy^0_{C_p}(\mathcal{E}_p)\otimes K
\bigr)|_{\Sigma_0\cap U_p}$. Define a subcomplex $\mathfrak{A}^{\bullet}$ of the total Čech complex
$\Tot\, \check{C}^{\bullet}(\mathsf{Def}(\mathcal{E},\varphi))$ by
\[
 \mathfrak{A}^0:=\bigoplus_{p\in Z(q)} H^0(U_p,\Asy^0_{C_p}(\mathcal{E}_p)),
\]
\[
 \mathfrak{A}^1:= H^0(\Sigma_0,\coker(\ad\varphi))\oplus \bigoplus_{p\in Z(q)}
 H^0(U_p,\Sy^0_{C_p}(\mathcal{E}_p)\otimes K)\oplus\bigoplus_{p\in Z(q)}
H^0(\Sigma_0\cap U_p,\Asy^0_{C_p}(\mathcal{E}_p)),
\]
\[
 \mathfrak{A}^2:=\bigoplus_{p\in Z(q)} H^0(\Sigma_0\cap U_p,\Sy^0_{C_p}(\mathcal{E}_p)\otimes K).
\]
Then we have the morphism $\Upsilon_0:H^1\bigl(\mathfrak{A}^{\bullet}\bigr)
\to H^1(\Sigma,\mathsf{Def}(\mathcal{E},\varphi))$. By Proposition \ref{prop:diff_of_Hit}, we have the morphism $d\calH:
 H^1(\Sigma,\mathsf{Def}(\mathcal{E},\varphi))\to H^0(\widetilde{S}_q,K_{\widetilde{S}_q})^-$. Similar to \cite[Theorem~3.28]{mochizuki2023asymptotic2}, using Proposition \ref{lemm:characterization_Gauss_Manin_Higgs_bundles} we have the following.

\begin{theorem}
\label{thm:iso_horizontal}
The composite $d\calH\circ\Upsilon_0$ is an isomorphism and $\mathrm{Im}(\Upsilon_0)=T_{(\mathcal{E},\varphi)}\mathcal{M}^{GM}$.
\end{theorem}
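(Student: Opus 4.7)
The plan is to proceed along the lines of \cite[Theorem~3.28]{mochizuki2023asymptotic2}, with Proposition \ref{lemm:characterization_Gauss_Manin_Higgs_bundles} playing the role of the horizontal criterion in Mochizuki's setting. First, I would give an explicit description of $d\calH\circ\Upsilon_0$ on a representative $(a_0,(b_p),(c_{0,p}))$ of a class in $H^1(\mathfrak{A}^{\bullet})$, with $a_0\in H^0(\Sigma_0,\coker(\ad\varphi))$, $b_p\in H^0(U_p,\Sy^0_{C_p}(\mathcal{E}_p)\otimes K)$ and $c_{0,p}\in H^0(\Sigma_0\cap U_p,\Asy^0_{C_p}(\mathcal{E}_p))$. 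On the smooth locus $\Sigma_0$ of $q$ the spectral correspondence identifies the trace-free part of $\coker(\ad\varphi)$ with the anti-invariant summand of $\tilde{\pi}_*K_{\widetilde{S}_q}$, so that $a_0$ provides a section of $K_{\widetilde{S}_q}^-$ over $\widetilde{S}_q\setminus\tilde{\pi}^{-1}(Z(q))$. The datum $b_p$ encodes, via the fiducial local form for $\tilde{\pi}_*\tilde{\lambda}$ recalled in the proof of Proposition \ref{lemm:characterization_Gauss_Manin_Higgs_bundles}, the admissible germ of a quadratic differential at $p$. The cocycle equation in $\mathfrak{A}^2$ is exactly the statement that these pieces match on the overlaps, and $d\calH\circ\Upsilon_0$ is the resulting globalization map into $H^0(\widetilde{S}_q,K_{\widetilde{S}_q})^-$.

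To prove this map is an isomorphism, I would construct an explicit inverse. Given $\alpha\in H^0(\widetilde{S}_q,K_{\widetilde{S}_q})^-$, its restriction to the complement of $\tilde{\pi}^{-1}(Z(q))$ yields a canonical $a_0$, while its local expansion at each $p$ selects a $b_p$ via the normal form. The difference between the images of $a_0$ and $b_p$ on $\Sigma_0\cap U_p$ is then a cocycle in $\mathsf{Def}(\mathcal{E}_U,\varphi_U;C_U)$ whose $\rho$ vanishes, so Lemma \ref{lemm:local_exactness} supplies a unique $c_{0,p}\in H^0(\Sigma_0\cap U_p,\Asy^0_{C_p}(\mathcal{E}_p))$ producing the missing Čech 1-cochain. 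This lift shows surjectivity; injectivity follows from the uniqueness clause of Lemma \ref{lemm:local_exactness}, applied to check that any representative with vanishing glued section is a coboundary in $\mathfrak{A}^{\bullet}$.

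I would then establish $\mathrm{Im}(\Upsilon_0)=T_{(\mathcal{E},\varphi)}\mathcal{M}^{GM}$ by matching the data of Proposition \ref{lemm:characterization_Gauss_Manin_Higgs_bundles} with representatives in $\mathfrak{A}^{\bullet}$ piece by piece. For the inclusion $\subseteq$: the datum $a_0$, valued in $\coker(\ad\varphi)$, is tangent to a deformation of the abelian spectral data and integrates over the polydisc $W$ to the trivialization $\psi_0$ of condition~(1); the $\Sy^0_{C_p}$-valued datum $b_p$ is the first-order version of condition~(3), expressing that $\varphi$ deforms compatibly with $C_p$; and the Čech cochain $c_{0,p}$ encodes condition~(2) as the infinitesimal glueing on $\Sigma_0\cap U_p$. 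Conversely, given a Gauss-Manin family, I would differentiate the trivialization data $(\psi_0,C_p)$ furnished by the proposition at $q$, reading off the Čech representative in $\mathfrak{A}^1$ from the corresponding transition cocycles and thus exhibiting the class as lying in $\mathrm{Im}(\Upsilon_0)$.

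The main obstacle is the careful manipulation of local cochains near each zero, where the symmetric pairing $C_p$ is only defined up to sign and the decomposition $\End^0=\Sy_{C_p}^0\oplus\Asy_{C_p}^0$ must be shown to interact cleanly with the $\mathsf{Def}$-differential and with the Hecke modification appearing in the proof of Proposition \ref{lemm:characterization_Gauss_Manin_Higgs_bundles}. Concretely, one must repeatedly reduce a general local cocycle on $U_p$, modulo $\ad\varphi$-coboundaries, to one whose degree-$0$ component is anti-symmetric and whose degree-$1$ component is symmetric with respect to $C_p$; this is precisely the content of Lemma \ref{lemm:local_exactness}, and its iterated use in normalizing representatives is the technical heart both of the isomorphism statement and of the identification with $T^{GM}$.
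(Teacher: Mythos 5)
Your proposal follows essentially the same route as the paper, which simply defers to Mochizuki's Theorem~3.28 together with the horizontal criterion (Proposition \ref{lemm:characterization_Gauss_Manin_Higgs_bundles}): the explicit inverse built from the restriction of $\nu$ over $\Sigma_0$, a local symmetric normal form near each $p$, and the local exactness of Lemma \ref{lemm:local_exactness} is exactly the intended argument, as is the matching of the \v{C}ech pieces of $\mathfrak{A}^{\bullet}$ with the data $(\psi_0,C_p)$. The one place to tighten is the inclusion $\mathrm{Im}(\Upsilon_0)\subseteq T_{(\mathcal{E},\varphi)}\mathcal{M}^{GM}$, where rather than asserting that the infinitesimal data ``integrates'' one should invoke the existence and uniqueness of Gauss-Manin families (Lemma \ref{lemm:existence_of_GM_Higgs_bundles}), differentiate them against the criterion to get the reverse inclusion, and conclude by comparing dimensions with the already-established isomorphism $d\calH\circ\Upsilon_0$.
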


We define the isomorphism
$\iota^{\mathrm{h}}:
H^0(\widetilde{S}_q,K_{\widetilde{S}_q})^- \to T_{(\mathcal{E},\varphi)}\mathcal{M}^{GM}$
by
\[
 \iota^{\mathrm{h}}(\nu)
 =\Upsilon_0\circ
 \bigl(
d\calH\circ\Upsilon_0
 \bigr)^{-1}(\nu).
\]
We also regard $\iota^{\mathrm{h}}$ as a morphism
$H^0(\widetilde{S}_q,K_{\widetilde{S}_q})^-
\to H^1(\Sigma,\mathsf{Def}(\mathcal{E},\varphi))$.

  Let $\nu\in H^0(\nS,K_{\nS})^-$. The multiplication of $\nu$ on $\calL$ induces \[F_\nu \in H^0(\Sigma,\End^0(\mathcal{E})\otimes K(Z(q))).\]
By \cite[Lemma~2.10]{mochizuki2023asymptotic2},
there exists a section $g_p\in H^0(U_p,\Asy^0_{C_p}(\mathcal{E}_p)(Z(q)))$ on $U_p$ such that
\[
\varphi_p:=F_{\nu}|_{U_p}-\ad(\varphi)(g_p)
\in H^0(U_p,\Sy^0_{C_p}(\mathcal{E}_p)\otimes K_{U_p}).
\]
Let $\psi_p:=g_{p}|_{U_p\cap \Sigma_0}$ and $\varphi_0:=F_{\nu}|_{\Sigma_0}$. Then we obtain a $1$-cocycle
$a=(\varphi_0,\varphi_p,\psi_p)$ in $\mathfrak{A}^1$. The cohomology class
$[a]\in H^1(\mathfrak{A}^{\bullet})$ is $(d\calH\circ\Upsilon_0)^{-1}(\nu)$. Its image in $H^1(\Sigma,\mathsf{Def}(\mathcal{E},\varphi))$
is $\iota^{\mathrm{h}}(\nu)$. Similar to \cite[Corollary~3.32]{mochizuki2023asymptotic2}, we have the following.
\begin{corollary}\label{prop:splitting}
Let $\tau\in A^1(\mathsf{Def}(\mathcal{E},\varphi))$ be a $1$-cocycle such that $\rho(\tau)=\nu$. We also assume the following conditions:
\begin{enumerate}
 \item [(i)] $\tau|_{\Sigma\setminus\bigcup U_p'}
       =F_{\nu|\Sigma\setminus\bigcup U_p'}$,
 \item [(ii)] $\tau|_{U_p}\in A^1(\mathsf{Def}(\mathcal{E}_p,\varphi_p;C_p))$
      for each $p\in Z(q)$.
\end{enumerate}
Then, $[\tau]=\iota^{\mathrm{h}}(\nu)$ holds in $H^1(\Sigma,\mathsf{Def}(\mathcal{E},\varphi))$. In particular, $[\tau]\in T_{(\mathcal{E},\varphi)}\mathcal{M}^{GM}$. Conversely, any element of $T_{(\mathcal{E},\varphi)}\mathcal{M}^{GM}$ is described in this way.
\end{corollary}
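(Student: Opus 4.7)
The plan is to translate the Čech-style cocycle $a=(\varphi_0,\varphi_p,\psi_p)\in\mathfrak{A}^1$ that defines $\iota^{\mathrm{h}}(\nu)$ into an explicit Dolbeault representative $\tau_0\in A^1(\mathsf{Def}(\mathcal{E},\varphi))$, and then to show $\tau-\tau_0$ is $(\dbarE+\ad\varphi)$-exact by applying the local exactness Lemma \ref{lemm:local_exactness} on each $U_p$. The essential point is that both $\tau$ and $\tau_0$ will lie in the "orthogonal" subcomplex near each zero, so the correction can be built locally.

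First I would choose smooth cut-off functions $\chi_p$ supported in $U_p$ and equal to $1$ on a neighborhood of $\overline{U_p'}$. Using the Čech--Dolbeault comparison I would set
\[
 \tau_0 = F_\nu - \sum_p\bigl[\ad(\varphi)(\chi_p g_p)\bigr] - \sum_p\dbar(\chi_p)\,g_p,
\]
where the first correction is interpreted as a $(1,0)$-form piece in $\End^0(\mathcal{E})\otimes K$ and the second as a $(0,1)$-form piece in $\End^0(\mathcal{E})$, both smooth on all of $\Sigma$ because $\chi_p g_p$ kills the pole of $g_p$ at $p$. A direct check (using $\dbar F_\nu=0$ and $\ad(\varphi)F_\nu=0$ on $\Sigma_0$) shows $\tau_0$ is a $1$-cocycle, and the standard Čech-to-Dolbeault isomorphism identifies $[\tau_0]$ with $\Upsilon_0([a])=\iota^{\mathrm{h}}(\nu)$.

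Next I would verify that $\tau-\tau_0$ satisfies the hypotheses of Lemma \ref{lemm:local_exactness} on each $U_p$. On $\Sigma\setminus\bigcup\overline{U_p'}$ hypothesis (i) gives $\tau=F_\nu$, and by the choice of $\chi_p$ we also have $\tau_0=F_\nu$ there, so $\tau-\tau_0$ is supported in $\bigcup\overline{U_p'}\subset\bigcup U_p$. Since $\rho(\tau)=\rho(\tau_0)=\nu$, the difference is a cocycle with $\rho(\tau-\tau_0)=0$. On each $U_p$, hypothesis (ii) places $\tau|_{U_p}$ inside $A^1(\mathsf{Def}(\mathcal{E}_p,\varphi_p;C_p))$; by construction $\chi_p g_p\in\Asy^0_{C_p}$ and $\varphi_p\in\Sy^0_{C_p}\otimes K$, so $\tau_0|_{U_p}$ lies in the same subcomplex. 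Applying Lemma \ref{lemm:local_exactness} on the connected open set $U_p$ (shrinking if necessary to make it non-compact and connected while still containing $\overline{U_p'}$) produces $\sigma_p\in A^0(\mathsf{Def}(\mathcal{E}_p,\varphi_p;C_p))$ with $(\dbarE+\ad\varphi)\sigma_p=(\tau-\tau_0)|_{U_p}$ and with $\mathrm{Supp}\,\sigma_p\subset\mathrm{Supp}(\tau-\tau_0)\subset\overline{U_p'}$. The support clause lets the $\sigma_p$ extend by zero to a global $\sigma\in A^0(\mathsf{Def}(\mathcal{E},\varphi))$, yielding $[\tau]=[\tau_0]=\iota^{\mathrm{h}}(\nu)$.

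For the converse, Theorem \ref{thm:iso_horizontal} says every class in $T_{(\mathcal{E},\varphi)}\mathcal{M}^{GM}$ is $\iota^{\mathrm{h}}(\nu)$ for the unique $\nu=\rho(\tau)$, and the explicit $\tau_0$ above is a representative satisfying (i) and (ii) by construction. The main obstacle is precisely the support-control clause in Lemma \ref{lemm:local_exactness}: without it one could not patch the local primitives $\sigma_p$ into a global smooth section without disturbing the behavior on $\Sigma_0$. This is exactly the reason the lemma is stated with that extra control, and it is what reduces the proof to the local symmetric/anti-symmetric decomposition available under the Gauss-Manin horizontal criterion of Proposition \ref{lemm:characterization_Gauss_Manin_Higgs_bundles}.
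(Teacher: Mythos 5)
Your proof is exactly the intended one: the paper itself gives no argument (it defers to Mochizuki's Corollary~3.32), and the route you take --- convert the \v{C}ech cocycle $a=(\varphi_0,\varphi_p,\psi_p)$ into a Dolbeault representative by cutoffs, then absorb the difference $\tau-\tau_0$ using the support-controlled local exactness of Lemma~\ref{lemm:local_exactness} on each $U_p$, and invoke Theorem~\ref{thm:iso_horizontal} for the converse --- is that proof. One correction is needed in the choice of cutoffs: you take $\chi_p\equiv 1$ on a neighborhood of $\overline{U_p'}$ and supported in $U_p$, so $\tau_0=F_\nu-\sum_p\chi_p\,\mathrm{ad}(\varphi)(g_p)-\sum_p\overline{\partial}(\chi_p)g_p$ differs from $F_\nu$ on the transition annulus inside $U_p\setminus \overline{U_p'}$; consequently your claim that $\tau-\tau_0$ is supported in $\bigcup\overline{U_p'}$ is false (only $\mathrm{Supp}(\tau-\tau_0)\subset\bigcup(\overline{U_p'}\cup\mathrm{Supp}\,\chi_p)\subset\bigcup U_p$ holds, which still suffices for the forward direction), and, more seriously, this $\tau_0$ does not satisfy hypothesis~(i), so it cannot serve as the representative needed for the converse. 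The fix is to take $\chi_p$ compactly supported in $U_p'$ and $\equiv 1$ only near $p$ (all that is needed for smoothness of $\tau_0$ is that $\overline{\partial}\chi_p$ vanish near the pole of $g_p$ and $F_\nu$, where the cancellation $F_\nu-\mathrm{ad}(\varphi)(g_p)=\varphi_p$ takes over); with that choice $\tau_0$ satisfies both (i) and (ii) and everything else in your argument goes through verbatim. A second, cosmetic point: the smoothness of $\tau_0$ at $p$ comes from this cancellation, not from ``$\chi_p g_p$ killing the pole of $g_p$'' --- near $p$ one has $\chi_p g_p=g_p$, which still has its pole.
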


Let $H$ be a harmonic metric of $(\mathcal{E},\varphi)$, and $\nabla_H=\dbarE+\partial_{E,H}$ be the Chern connection of $(\mathcal{E},H)$. Fix a conformal metric $g_{\Sigma}$ of $\Sigma$. Let $(\dbarE+\ad\varphi)^{\ast}_{H,g_\Sigma}$ be the formal adjoint of $\dbarE+\ad\varphi$ with respect to $H$ and $g_\Sigma$. Let
\begin{equation}\label{eq:Laplacian}
    \Delta_{H,g_\Sigma}=(\dbarE+\ad\varphi)^{\ast}_{H,g_\Sigma}(\dbarE+\ad\varphi)+(\dbarE+\ad\varphi)(\dbarE+\ad\varphi)^{\ast}_{H,g_\Sigma}.
\end{equation}
$\tau\in A^1(\mathsf{Def}(\mathcal{E},\varphi))$ is called a harmonic 1-form, if $\Delta_{H,g_\Sigma}\tau=0$, or equivalently \[(\dbarE+\ad\varphi)\tau=(\partial_{E,H}+\varphi^\dagger_H)\tau=0.\]
We have $H^1(\Sigma,\mathsf{Def}(\mathcal{E},\varphi))\cong \mathrm{Harm}^1(\End^0(\mathcal{E}),\ad\varphi,H)$, the space of harmonic 1-forms.

Let $\omega_{\mathcal{M},(\mathcal{E},\varphi)}$ be the alternating bilinear form on $A^1(\mathsf{Def}(\mathcal{E},\varphi))$ defined by
\[\omega_{\mathcal{M},(\mathcal{E},\varphi)}(\tau_1,\tau_2)=-\int_\Sigma \Tr(\tau_1\wedge \tau_2).\]
It restricts to an alternating bilinear form on $\mathrm{Harm}^1(\End^0(\mathcal{E}),\ad\,\varphi,H)\cong T_{(\mathcal{E},\varphi)}\mathcal{M}$. Let $\omega_{\mathcal{M}^d}$ be the  restriction of $\omega_{\mathcal{M}}$ to $\MM^d$.

\begin{lemma}[{\cite[Corollary~3.33]{mochizuki2023asymptotic2}}]
$\mathrm{Im}(\iota^{\mathrm{h}})$ is Lagrangian with respect to $\omega_{\mathcal{M}^d}$.
\end{lemma}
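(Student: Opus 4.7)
The plan is to establish both isotropy and the half-dimensional property of $\mathrm{Im}(\iota^{\mathrm{h}})\subset T_{(\mathcal{E},\varphi)}\mathcal{M}^d$. For isotropy, I pick $\nu_1,\nu_2\in H^0(\widetilde{S}_q,K_{\widetilde{S}_q})^-$ and, using Corollary \ref{prop:splitting}, represent $\iota^{\mathrm{h}}(\nu_i)$ by explicit $1$-cocycles $\tau_i\in A^1(\mathsf{Def}(\mathcal{E},\varphi))$ satisfying conditions (i) and (ii) there. The aim is to show that the $(1,1)$-form $\Tr(\tau_1\wedge\tau_2)$ vanishes identically on $\Sigma$, so that $\omega_{\mathcal{M}^d}(\iota^{\mathrm{h}}(\nu_1),\iota^{\mathrm{h}}(\nu_2))=-\int_\Sigma \Tr(\tau_1\wedge\tau_2)=0$.

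On $\Sigma\setminus\bigcup U_p'$, condition (i) gives $\tau_i=F_{\nu_i}$, which is a holomorphic section of $\End^0(\mathcal{E})\otimes K$ (no poles, since $Z(q)\subset\bigcup U_p'$). Both $\tau_i$ are therefore pure $(1,0)$-forms, and $\tau_1\wedge\tau_2=0$ for type reasons. On each $U_p$, condition (ii) places $\tau_i|_{U_p}$ in
\[
 A^1(\mathsf{Def}(\mathcal{E}_p,\varphi_p;C_p))=\Omega^{1,0}(\mathrm{Sy}^0_{C_p}(\mathcal{E}_p))\oplus\Omega^{0,1}(\mathrm{Asy}^0_{C_p}(\mathcal{E}_p));
\]
writing $\tau_i|_{U_p}=\varphi_{i,p}+\psi_{i,p}$ in this decomposition, type considerations collapse the wedge to
\[
 \Tr(\tau_1\wedge\tau_2)|_{U_p}=\Tr(\varphi_{1,p}\wedge\psi_{2,p})+\Tr(\psi_{1,p}\wedge\varphi_{2,p}).
\]
The pointwise linear-algebra input is that $\Tr(AB)=0$ whenever $A$ is $C_p$-symmetric and $B$ is $C_p$-antisymmetric: in a $C_p$-orthonormal frame one has $A^T=A$, $B^T=-B$, hence $\Tr(AB)=\Tr((AB)^T)=\Tr(B^TA^T)=-\Tr(AB)$. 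Each coefficient trace thus vanishes, and together with the previous region we conclude $\Tr(\tau_1\wedge\tau_2)\equiv 0$ on all of $\Sigma$, giving isotropy.

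For the dimension count, specializing the exact sequence \eqref{eq:SES_tangentMr1} to $V=V_{\max}$ yields
\[
 \dim T_{(\mathcal{E},\varphi)}\mathcal{M}^d=\dim H^1(\widetilde{S}_q,\mathcal{O})^- + \dim H^0(\widetilde{S}_q,K_{\widetilde{S}_q})^-.
\]
Since $\sigma$ is holomorphic it preserves orientation, so it acts trivially on $H^1(\widetilde{S}_q,K_{\widetilde{S}_q})=\mathbb{C}$; consequently the Serre duality pairing is $\sigma$-equivariant with $\sigma$ acting as $+1$ on the target, which forces $(H^1(\mathcal{O}))^-$ and $(H^0(K_{\widetilde{S}_q}))^-$ to be naturally dual, hence of equal dimension. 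Thus $\mathrm{Im}(\iota^{\mathrm{h}})\cong H^0(\widetilde{S}_q,K_{\widetilde{S}_q})^-$ is of half dimension in $T_{(\mathcal{E},\varphi)}\mathcal{M}^d$, and combined with isotropy this proves it is Lagrangian. The one point requiring care is the consistency of the representatives from Corollary \ref{prop:splitting} on the overlap $U_p\cap(\Sigma\setminus\bigcup U_p')$, where $F_{\nu_i}$ must itself lie in $\mathsf{Def}(\mathcal{E}_p,\varphi_p;C_p)$, i.e.\ be $C_p$-symmetric; but this is precisely what is built into the pushforward construction of $C_p$ from $\widehat{C}_p$ in the proof of Proposition \ref{lemm:characterization_Gauss_Manin_Higgs_bundles}, so it is automatic here.
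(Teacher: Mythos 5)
Your proposal is correct and follows essentially the same route as the source the paper cites for this statement (the paper itself gives no proof, deferring to Mochizuki's Corollary~3.33): isotropy comes from evaluating $\omega_{\mathcal{M}}$ on the representatives supplied by Corollary~\ref{prop:splitting}, where $\Tr(\tau_1\wedge\tau_2)$ vanishes pointwise — by type on $\Sigma\setminus\bigcup U_p'$ and by the symmetric/anti-symmetric trace cancellation on each $U_p$ — and half-dimensionality follows from the $\sigma$-equivariance of Serre duality. Your closing remark that $F_\nu$ is automatically $C_p$-symmetric on the overlaps (being a multiplication operator, hence self-adjoint for the norm pairing $\widehat{C}_p=\mathsf{nm}\circ C_{\tilde\pi^{-1}p}$) correctly disposes of the only compatibility issue.
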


\section{The semi-flat metric}\label{sec:semi-flat}
Recall that in Section \ref{ssec:Hitchin_subint} we defined the subintegrable system $\calH_d: \calM^d \to \calB^d$ with fibers being abelian torsors over $\mathsf{Prym}(\widetilde{S}_q)$. By Freed's construction \cite{freed1999special}, the base locus $\calB^d$ carries a special Kähler metric and the subintegrable system $\calM^d$ carries a semi-flat hyperkähler metric. In this section we describe the semi-flat hyperkähler metric on $\calM^d$.

\subsection{The special Kähler metric on $\calB^d$}
A K\"ahler manifold $(M^{2m},\omega,I)$ is called special K\"ahler if there exists a flat, symplectic, torsion-free connection $\na$ such that $d_{\na}I=0$, where $I$ is regarded as a $TM$-valued 1-form. The special Kähler metric associated to an algebraically completely integrable system decodes the periods of the abelian scheme.

The polarization on the $\mathsf{Prym}(\widetilde{S}_q)$ is given by the restriction of the intersection pairing to the lattice $H_1(\widetilde{S}_q,\mathbb{Z})^-$. Locally on $\calB^d$ we can choose cycles $\gamma_i,\delta_i$ with $i=1, \dots, 3g-3-d$ that form a relative symplectic basis of the odd homology $\Gamma_q=H_1(\widetilde{S}_q,\mathbb{Z})^-$. We identify the tangent space to $\calB^d$ with the cotangent space to the Prym variety by
\[ \tau: H^0(\Sigma,K^2(-D)) \to H^0(\widetilde{S}_q,K_{\widetilde{S}_q})^-, \quad \dot{q} \mapsto \frac{\tilde{\pi}^*\dot{q}}{2\tilde{\omega}}.
\]
Here $\tilde{\pi}^*\dot{q}$ is a quadratic differential with simple zeros at the preimages of the nodes and order $2$ zeros at $R$. Hence the fraction is holomorphic. Furthermore, $\tilde{\omega}$ is anti-symmetric and the pullback $\tilde{\pi}^*\dot{q}$ is symmetric with respect to $\sigma$. Hence $\tau$ maps to the anti-symmetric differentials. By dimension counting it is an isomorphism.
\begin{lemma}
  The conjugate special Kähler coordinates are given by $z_i=\int_{\gamma_i} \tilde{\omega}$ and $w_i=\int_{\delta_i} \tilde{\omega}$ and the Riemannian metric is given by
  \begin{align} g_{\mathrm{sK}}(\dot{q},\dot{q})=\frac{\kappa}{4}\int_{\Sigma} \frac{\vert \dot{q} \vert ^2}{\vert q\vert} = \frac{\kappa}{2} \int_{\widetilde{S}_q} \vert \tau(\dot{q})\vert ^2. \label{eq:g_sK}
  \end{align} %\textcolor{blue}{I dont see yet why the scaling with kappa is important} \textcolor{red}{because we obtain a special Kahler cone metric.}
\end{lemma}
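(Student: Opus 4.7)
The plan is to identify $\tilde\omega$ as the Seiberg–Witten (tautological) differential of the algebraically integrable system $\calH_d:\calM^d\to\calB^d$, apply Freed's formulas to obtain the special Kähler coordinates and metric in terms of periods of $\tilde\omega$, and then convert the resulting bilinear expression into the two integral formulas in \eqref{eq:g_sK} via the Riemann bilinear relations and a pushdown along the double cover $\tilde\pi$.

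First I would verify that the cycles $\gamma_i,\delta_i$ really form a Gauss–Manin flat, Lagrangian basis of the local system $\Gamma_q = H_1(\widetilde{S}_q,\mathbb Z)^-$ with respect to the Prym polarization, and that $\tilde\omega$ restricts to a holomorphic one-form on each fiber that lies in the anti-invariant part, so that the periods $z_i=\int_{\gamma_i}\tilde\omega$ and $w_i=\int_{\delta_i}\tilde\omega$ are well-defined (locally) holomorphic functions on $\calB^d$. Differentiating under the integral sign and using the Gauss–Manin connection on relative cohomology gives
\[
dz_i(\dot q) = \int_{\gamma_i} \tau(\dot q), \qquad dw_i(\dot q)=\int_{\delta_i}\tau(\dot q),
\]
because $\tau(\dot q)=\tilde\pi^*\dot q/(2\tilde\omega)$ is exactly the derivative of $\tilde\omega$ in the direction $\dot q$ along the family of spectral curves (the factor of $2$ comes from $\tilde\omega^2=\tilde\pi^* q$). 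This is the classical variation-of-periods identity in the Hitchin setting and it shows that $\tau$ is the map $T_q\calB^d\to H^0(\widetilde{S}_q,K_{\widetilde{S}_q})^-$ implementing the cotangent identification of Freed's construction.

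Once $(z_i,w_i)$ are shown to be conjugate special Kähler coordinates, Freed's formula for the associated Kähler metric reads (up to the overall constant $\kappa$ coming from normalization conventions)
\[
g_{\mathrm{sK}}(\dot q,\dot q) \;=\; \tfrac{\kappa}{2}\,\operatorname{Im}\!\sum_i \overline{dz_i(\dot q)}\, dw_i(\dot q),
\]
and the Riemann bilinear relations for the anti-invariant holomorphic one-form $\tau(\dot q)$ with respect to the symplectic basis $\{\gamma_i,\delta_i\}$ immediately give
\[
\operatorname{Im}\!\sum_i \overline{dz_i(\dot q)}\, dw_i(\dot q) \;=\; \int_{\widetilde{S}_q} |\tau(\dot q)|^2.
\]
This yields the second equality in \eqref{eq:g_sK}. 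For the first equality I would use that $|\tilde\omega|^2 = |\tilde\pi^*q|$ on $\widetilde{S}_q^\circ$, so that $|\tau(\dot q)|^2 = |\tilde\pi^*\dot q|^2/(4|\tilde\pi^* q|)$ is a $\sigma$-invariant (2,2)-density, and then push down via the degree-two cover to obtain
\[
\int_{\widetilde{S}_q} |\tau(\dot q)|^2 \;=\; \tfrac12 \int_\Sigma \frac{|\dot q|^2}{|q|},
\]
which combined with the factor $\kappa/2$ gives the prefactor $\kappa/4$ on $\Sigma$.

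The main obstacle I expect is not the computation itself but the bookkeeping around the Prym polarization: the Riemann bilinear relations must be applied on $\widetilde{S}_q$ using only the anti-invariant cycles and anti-invariant differentials, and one needs to check that the restriction of the intersection pairing to $H_1(\widetilde{S}_q,\mathbb Z)^-$ is unimodular on the chosen basis $\{\gamma_i,\delta_i\}$ with the correct normalization so that the standard formula holds without spurious factors. A secondary subtlety is the integrability of the density $|\dot q|^2/|q|$ near the zeros of $q$: the condition $V=V_{\max}$ forces $\dot q$ to vanish to sufficient order along the zeros of $q$ (this is encoded in $H^0(\Sigma,K^2(-D))$), so that $\tau(\dot q)$ is holomorphic on $\widetilde{S}_q$ and both integrals converge. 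Once these two points are in place, the chain of identifications above is straightforward and reduces to the standard Freed calculation adapted to the Prym setting.
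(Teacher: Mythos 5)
Your proposal is correct in its first half and essentially coincides with the paper there: both arguments obtain the conjugate coordinates by differentiating the periods of $\tilde{\omega}$ along a path $q_t=q_0+t\dot{q}+\calO(t^2)$, using that $\tilde{\omega}+t\,\tau(\dot{q})$ solves the spectral equation to first order, and conclude that $\mathrm{Re}(z_i),\mathrm{Re}(w_i)$ are Gauss--Manin flat. For the metric formula, however, you take a genuinely different route. The paper does not invoke the period formula for $g_{\mathrm{sK}}$ at all: it quotes the explicit K\"ahler potential $\calK=\int_\Sigma|q\bar q|\,\mathrm{d}A=\tfrac12\int_{\wt{S}_q}|\tilde{\omega}|^2\,\mathrm{d}A$ from Hitchin's work on the subintegrable special K\"ahler structure and differentiates it twice, which produces \eqref{eq:g_sK} directly with the correct constants. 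You instead use Freed's expression $g_{\mathrm{sK}}=\tfrac{\kappa}{2}\operatorname{Im}\sum_i\overline{dz_i}\,dw_i$ and convert it via the Riemann bilinear relations applied to the anti-invariant form $\tau(\dot q)$, followed by the degree-two pushdown $\int_{\wt{S}_q}=2\int_\Sigma$ (your constants check out: $|\tau(\dot q)|^2=|\tilde\pi^*\dot q|^2/(4|\tilde\pi^*q|)$ gives exactly the $\kappa/4$ prefactor). Your approach has the merit of making the special K\"ahler structure and the role of the polarization explicit, but it carries the burden you yourself identify: the restriction of the intersection pairing to $H_1(\wt{S}_q,\ZZ)^-$ need not be unimodular for a ramified double cover, so the ``standard'' bilinear relation in the symplectic basis $\{\gamma_i,\delta_i\}$ could a priori introduce a factor tied to the type of the Prym polarization. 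The paper's potential-based route sidesteps this bookkeeping entirely, which is precisely what it buys by citing Hitchin's computation; if you want your argument to be self-contained you would need to verify that the chosen $\{\gamma_i,\delta_i\}$ is unimodular for the restricted pairing (or track the polarization type through the bilinear identity). Your remark on integrability near $Z(q)$ matches the paper's observation that $\tilde\pi^*\dot q/(2\tilde\omega)$ is holomorphic on $\wt{S}_q$, so no issue there.
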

\begin{proof}
  Define the complex coordinates $z_i$ and $w_i$ as above. Let $q_t: (-\epsilon,\epsilon) \to \calB^d$ with $q_t=q_0+t\dot{q}+\calO(t^2)$. Then 
  \[  \mathrm{d}\, \mathrm{Re}(z_i)(\dot{q})=\frac{\mathrm{d}}{\mathrm{d}t}\Big|_{t=0}\int_{\gamma_i}\mathrm{Re} (\lambda(q_t))= \int_{\gamma_i}\mathrm{Re} (\tau(\dot{q}))
  \] as $\tilde{\omega} + \epsilon \tau(\dot{q})$ satisfies the equation $T^2+ \tilde{\pi}^*q_t=0$ to first order. Hence, $\mathrm{d}\, \mathrm{Re}(z_i)(\dot{q})=0$ if and only if for all tangent vectors $\dot{q} \in T\calB^d$ the cap product of $\mathrm{Re} (\tau(\dot{q}))$ with $\gamma_i$ is zero. Hence, $x_i=\mathrm{Re}(z_i)$ defines a flat coordinate with respect to the connection on $T\calB^d$ induced by the Gauss-Manin connection on the family of abelian varieties $\Prym(\widetilde{S})$. Similarly, $y_i=\mathrm{Re}(w_i)$ defines a flat real coordinate and together $x_i,y_i$ define a flat coordinate system. The special Kähler coordinates are by definition the associated holomorphic coordinates to $z_i,w_i$.
  The Kähler potential was computed in \cite{Hitchin2021subintegrable_special_Kaehler} to be given by
  \[ \calK= \int_{\Sigma} \vert q \overline{q} \vert \mathrm{d} A= \frac{1}{2} \int_{\wt{S}_q} \vert \tilde{\omega} \vert ^2  \mathrm{d} A   .
  \] By differentiating we obtain the description of the Kähler metric given above.
\end{proof}

%\begin{corollary}
%	The special K\"ahler metric depends smoothly on basepoint $q\in \MB'$.{\color{blue} Where do we need that explicit statement?}
%      \end{corollary}

\subsection{The semi-flat hyperkähler metric}
 As explained above $\mathcal{M}^d$ is a torsor over the relative Prym variety of the normalized spectral cover given by $\mathsf{Prym}(\wt{S})=T^\vee \calB/\Gamma$ for a family of lattices $\Gamma$. The Gauss-Manin connection provides a local splitting of tangent vectors
\[ T\mathsf{Prym}(\wt{S})= T T^\vee \calB= T\calB \oplus T^\vee \calB.
\] With respect to this splitting the semi-flat hyperkähler metric is defined by
\begin{align} g_{\mathrm{sf}}=g_{\mathrm{sK}} \oplus g_{\mathrm{sK}}^\vee \label{eq:g_sf}.
\end{align}
The pairing $T\calB^d \times T^\vee\calB^d \to \CC$ is realized in the present case by the Serre pairing between $H^0(\widetilde{S}_q,K_{\widetilde{S}_q})^-$ and $H^1(\widetilde{S}_q,\calO_{\widetilde{S}_q})^-$.
\par
Hitchin's subintegrable system is a torsor for $\mathsf{Prym}(\wt{S})$ and the Gauss-Manin splitting of the tangent space is given by
\[ T_{(\mathcal{E},\varphi)}\calM^d = (T_{(\calE,\varphi)}\calM^d)^{GM} \oplus (T_{(\calE,\varphi)}\calM^d)^{\mathrm{v}}\,.
\]
Furthermore, we computed in Lemma \ref{lemm:cohomology_spectral}
\[ (T_{(\calE,\varphi)}\calM^d)^{GM} =H^0(\wt{S}_q,K_{\wt{S}_q})^- \quad \text{and} \quad (T_{(\calE,\varphi)}\calM^d)^{\mathrm{v}}=H^1(\wt{S}_q,\calO_{\wt{S}_q})^-\,.
\] Using this the semi-flat hyperkähler metric is described by the following proposition:

\begin{proposition}[{\cite[Proposition 2.14]{fredrickson2020exponential}}]
  The semi-flat hyperkähler metric is determined by the following properties:
  \begin{enumerate}
  \item Horizontal and vertical tangent vectors are orthogonal.
  \item  On horizontal tangent vectors $\dot{\tau}\in H^0(\wt{S}_q,K_{\wt{S}_q})^-$ it is $\Vert \dot{\tau} \Vert^2= \frac{1}{2}\kappa \int_{\wt{S}_q} \vert \dot{\tau} \vert^2$.
  \item  On vertical tangent vectors $\beta \in H^1(\wt{S}_q,\calO_{\wt{S}_q})^-$ it is $\Vert \beta \Vert^2= 2\kappa^{-1} \int_{\wt{S}_q} \vert \beta^h \vert^2$\,, where $\beta^h$ is a harmonic representative for the associated class $\beta^{0,1} \in H^{0,1}(\wt{S}_q)^-$.
  \end{enumerate}
  %{\color{blue} It seems that I am of by a factor of $4$ in iii). Somehow Fredrickson uses Serre pairing with the formula $2 \int \alpha \wedge \beta$ Then it works out. By why this extra 2 in the Serre pairing?
   % }
\end{proposition}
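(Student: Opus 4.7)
My plan is to verify the three listed properties one by one, starting from the definition $g_{\mathrm{sf}} = g_{\mathrm{sK}} \oplus g_{\mathrm{sK}}^\vee$ and the Gauss--Manin splitting
\[ T_{(\calE,\varphi)}\calM^d = H^0(\wt{S}_q, K_{\wt{S}_q})^- \oplus H^1(\wt{S}_q, \calO_{\wt{S}_q})^-
\]
already recorded in the excerpt. Property (1) is then immediate: by construction the horizontal and vertical summands are orthogonal complements in the direct-sum decomposition of the tangent space, so the cross-term vanishes.

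For property (2), the key observation is that the horizontal subspace $(T_{(\calE,\varphi)}\calM^d)^{GM}$ is identified, via the differential of the Hitchin map $d\calH$ combined with the isomorphism $\tau$ from the previous lemma, with $H^0(\wt{S}_q, K_{\wt{S}_q})^-$. Under this identification the semi-flat metric on horizontal vectors coincides with the pullback of $g_{\mathrm{sK}}$ along $d\calH$. Substituting $\dot{\tau} = \tau(\dot{q})$ into formula \eqref{eq:g_sK} directly yields
\[ \Vert \dot{\tau}\Vert^2 = g_{\mathrm{sK}}(\dot{q},\dot{q}) = \tfrac{\kappa}{2}\int_{\wt{S}_q}\vert \dot{\tau}\vert^2.
\]

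For property (3), I need to compute the dual special Kähler metric $g_{\mathrm{sK}}^\vee$ on $T^\vee \calB^d$ after transporting it to $H^1(\wt{S}_q, \calO_{\wt{S}_q})^-$. The identification $T^\vee \calB^d \cong H^1(\wt{S}_q,\calO_{\wt{S}_q})^-$ is realized by the Serre duality pairing with $H^0(\wt{S}_q, K_{\wt{S}_q})^-$. Given $\beta \in H^1(\wt{S}_q,\calO_{\wt{S}_q})^-$, its harmonic representative $\beta^h$ is an anti-holomorphic $(0,1)$-form, and Serre duality acts on it by the formula $\langle \eta, \beta\rangle \mapsto \int_{\wt{S}_q}\eta\wedge\beta^h$. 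Since on a compact Riemann surface the Hodge-$*$ provides an antilinear isometry between harmonic $(1,0)$- and $(0,1)$-forms preserving the $L^2$ pairing, dualizing the positive quadratic form $\tfrac{\kappa}{2}\int|\cdot|^2$ on $H^0(K_{\wt{S}_q})^-$ produces exactly $\tfrac{2}{\kappa}\int|\cdot|^2$ on its dual represented by harmonic $(0,1)$-forms.

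The main technical point is keeping the Serre-duality normalizations and the restriction to the $\sigma$-anti-invariant summands consistent. Once one checks that the $\sigma$-equivariant Hodge decomposition respects the minus-parts on both sides, and that the harmonic representative $\beta^h$ of a $\sigma$-anti-invariant class is itself $\sigma$-anti-invariant (which follows from uniqueness of harmonic representatives applied to $\sigma^*\beta^h$), the dualization computation goes through verbatim. Uniqueness of the metric with these three properties is automatic, since they specify $g_{\mathrm{sf}}$ on a full set of tangent directions.
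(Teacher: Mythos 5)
Your proposal is correct and follows essentially the same route as the paper: properties (1) and (2) read off directly from the definitions \eqref{eq:g_sK} and \eqref{eq:g_sf}, and property (3) obtained by identifying $g_{\mathrm{sK}}^\vee$ through the Serre pairing restricted to the $\sigma$-anti-invariant summands and the conjugate-linear Hodge star isometry between harmonic $(1,0)$- and $(0,1)$-forms, which inverts the factor $\tfrac{\kappa}{2}$ to $2\kappa^{-1}$. The paper makes the duality map explicit as $F(\dot{\tau})=-\tfrac12\kappa\ast\dot{\tau}$, but this is only a more concrete packaging of the same computation.
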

\begin{proof}
  The properties (i) and (ii) are clear from equations \eqref{eq:g_sK} and \eqref{eq:g_sf}. For the last property we have to identify $g_{\mathrm{sK}}^\vee$. The identification of $T_{q}\calB= H^{1,0}(\wt{S}_q)^-$ with its dual using $g_{\mathrm{sK}}$ is given by $\dot{\tau} \mapsto g_{\mathrm{sK}}(\dot{\tau},\cdot)= -\frac{1}{2} \kappa \int \ast \dot{\tau} \wedge \cdot$. Note that Serre duality pairs $H^{0,1}(\wt{S}_q)^i \otimes H^{1,0}(\wt{S}_q)^j$ to zero for $(i,j)=(+,-)$ or $(-,+)$. Hence, Serre duality induces a non-degenerate pairing on the anti-invariant parts. Denoting by $\calH^{(k,l)}$ the harmonic $(k,l)$-forms we obtain the isomorphism
  \[ F: H^{1,0}(\wt{S}_q)^-=\mathcal{H}^{1,0}(\wt{S}_q)^- \to \mathcal{H}^{0,1}(\wt{S}_q)^-, \quad \dot{\tau} \mapsto -\tfrac{1}{2} \kappa \ast \dot{\tau}\,,
  \] where $\ast$ denotes the (conjugate linear) Hodge star operator. Hence, for $\beta \in H^1(\wt{S}_q,\calO_{\wt{S}_q})^-$ the induced metric $g_{\mathrm{sK}}^\vee$ is $g_{\mathrm{sK}}$ evaluated on  the inverse $F^{-1}$ applied to a harmonic representative. This is the formula stated in the proposition.
\end{proof}

\section{Comparison of metrics}
\subsection{Metrics}
In this subsection, we review the definitions of two hyperkähler metrics on $\mathcal{M}$ which we will compare. The Hitchin metric $g_{L^2}$ is a complete hyperkähler metric on $\mathcal{M}$, while the semi-flat metric $g_{\mathrm{sf}}$ is an incomplete hyperkähler metric defined only on certain subsets of $\mathcal{M}$.

We will define the Hitchin metric $g_{L^2}$ on $\mathcal{M}$ in terms of harmonic 1-forms, as in \cite[Section~4.2.1]{mochizuki2023asymptotic2}. Let $H$ be a harmonic metric of $(\mathcal{E},\varphi)$, and $\nabla_H=\dbarE+\partial_{E,H}$ be the Chern connection of $(\mathcal{E},H)$. Fix a conformal metric $g_{\Sigma}$ of $\Sigma$. Recall that $H^1(\Sigma,\mathsf{Def}(\mathcal{E},\varphi))\cong \mathrm{Harm}^1(\End^0(\mathcal{E}),\ad\,\varphi,H)$. Then the Hitchin metric is given by
\[g_{L^2}|_{(\mathcal{E},\varphi)}(\tau,\tau)=2\sqrt{-1}\int_{\Sigma}\Tr\big(\tau^{1,0}\wedge (\tau^{1,0})^\dagger_H-\tau^{0,1}\wedge(\tau^{0,1})^\dagger_H\big),\]
for $\tau\in \mathrm{Harm}^1(\End^0(\mathcal{E}),\ad\,\varphi,H)$. By \cite[Section~4.2.3]{mochizuki2023asymptotic2}, this coincides with Hitchin's original definition.

 As in Section \ref{subsec:GM_infinitesimal_deform}, there is a splitting $\iota^{\mathrm{h}}$ of
\[0 \to H^1(\nS,\mathcal{O}_{\nS})^- \to T_{(\mathcal{E},\varphi)}\calM^d \xrightarrow{d \calH}  H^0(\nS,K_{\nS})^- \to 0,\]
which induces the following isomorphism:
\[T_{(\mathcal{E},\varphi)}\mathcal{M}^d\cong H^1(\nS,\mathcal{O}_{\nS})^-\oplus H^0(\nS,K_{\nS})^-.  \]
Using this isomorphism, we define the semi-flat metric on $\MM^d$ by
\[\gsf|_{(\mathcal{E},\varphi)}([\tau],[\tau])=2\sqrt{-1}\int_{\nS}\big(\tau^{1,0}\wedge \overline{\tau^{1,0}}-\tau^{0,1}\wedge\overline{\tau^{0,1}}\big),\]
where $\tau$ is a harmonic 1-form on $\nS$ with $[\tau^{0,1}]\in H^1(\nS,\mathcal{O}_{\nS})^-$ and $[\tau^{1,0}]\in H^0(\nS,K_{\nS})^-$. According to Sections \ref{sec:GM_deform} and \ref{sec:semi-flat}, on the subintegrable system $\MM^d$, this definition coincides with Freed's classical construction of the semi-flat metric.

\subsection{Comparing $g_{L^2}$ and $g_{\mathrm{sf}}$}
\subsubsection{Vertical deformations} Let $\tau\in H^1(\nS,\mathcal{O}_{\nS})^-$, represented by a harmonic $(0,1)$-form on $\nS$, also denoted by $\tau$. Via the natural identification of $\nSt$ and $\nS$, we regard $\tau$ as an element in $H^1(\nSt,\mathcal{O}_{\nSt})^-$. We will study the harmonic representative ${\tt V}(\tau,t)\in A^1(\mathsf{Def}(\mathcal{E},t\varphi))$ of $\ivt(\tau)\in H^1(\Sigma,\mathsf{Def}(\mathcal{E},t\varphi))$.

By the assumption $V=V_{\mathrm{max}}$, we have $\tilde{\pi}_\ast(\calL)=\mathcal{E}$. The multiplication of $\tau$ on $\calL$ induces a $C^\infty$ section $F_\tau$ of $\Omega^{0,1}_{\Sigma^\circ}\otimes\End^0(\mathcal{E})$, which is a harmonic 1-form of $(\End(\mathcal{E}),\ad\,\varphi,H_\infty)|_{\Sigma^\circ}$. Let $N(q)$ be a neighborhood of $Z(q)$. By the asymptotic decoupling \cite[Section~2]{Mochizukiasymptotic}, we have the following estimates.
\begin{lemma}[{\cite[Lemma~4.22]{mochizuki2023asymptotic2}}]\label{lem:Ftau_Est}
    There exist positive constants $B_i$ ($i=0,1,2$) such that
    \[ |F_\tau|_{H_t,g_\Sigma}\leq B_0\lVert\tau\rVert,\quad |(\dbarE+\ad\,t\varphi)^\ast_{H_t,g_{\Sigma}}F_\tau|_{H_t}\leq B_1\exp(-B_2t)\lVert\tau\rVert,\]
    on $\Sigma\backslash N(q)$ for any $t\geq 1$.
\end{lemma}

  Let $\Sigma_p:=\Sigma_p(2)$. $\nS|_{\Sigma_p}$ has $|\tilde{\pi}^{-1}(p)|$ connected components. There exists an anti-holomorphic function $\beta_p$ on $\nS|_{\Sigma_p}$ such that $\bar{\partial}\beta_p=\tau|_{\nS|_{\Sigma_p}}$ and $\beta_p(\tilde{p})=0$ for all $\tilde{p}\in \tilde{\pi}^{-1}(p)$. The multiplication of $\bar{\beta}_p$ on $\calL$ induces $F_{\bar{\beta}_p}\in C^{\infty}(\End^0(\mathcal{E}))$, satisfying
\[(\partial_{E,H_t}+\ad\,t\varphi_{H_t}^\dagger)(F_{\bar{\beta}_p})^\dagger_{H_t}=0.\]
We obtain the following harmonic 1-form of $(\End^0(\mathcal{E}),t\ad\,\varphi,H_t)$ on $\Sigma_p$:
\[V_p(\tau,t):=(\dbarE+\ad\, t\varphi)(F_{\bar{\beta}_p})^\dagger_{H_t}.\]
\begin{lemma}[{\cite[Lemma~4.24]{mochizuki2023asymptotic2}}]\label{lem:Vert_Local_Est1}
    For any $1/4<r_2<r_1<2$, there exist $B_i>0$ ($i=0,1,2,3$) such that
    \[|(F_{\bar{\beta}_p})^\dagger_{H_t}|_{H_t}\leq B_0\lVert\tau\rVert,\quad |{\tt V}_p(\tau,t)-F_\tau|_{H_t,g_{\Sigma}}\leq B_1\exp(-B_2 t)\lVert\tau\rVert,\quad |{\tt V}_p(\tau,t)|_{H_t,g_\Sigma}\leq B_3\lVert\tau\rVert, \]
    on $\Sigma_p(r_1)\backslash \Sigma_p(r_2)$ for any $t\geq 1$.
\end{lemma}
\begin{lemma}\label{lem:Vert_Local_Est2}
For any $0<r<2$, there exist $B,B'>0$ such that $ \lVert {\tt V}_p(\tau,t) \rVert_{L^2(\Sigma_p(r)),H_t}\leq B\lVert\tau\rVert$ and $|{\tt V}_p(\tau,t)|_{H_t,g_\Sigma}\leq B't^{2/(m_p+1)}\lVert\tau\rVert$ on $\Sigma_p(r)$ for any $t\geq 1$.
\end{lemma}
\begin{proof}
This follows from the proof of \cite[Lemma~4.16]{mochizuki2023asymptotic2}, the only difference is that we use the rescaling $z\mapsto t^{2/(m_p+1)}z$ instead of $z\mapsto tz$.
\end{proof}

For each $p\in Z(q)$, let $\chi_p:\Sigma\to [0,1]$ be a $C^\infty$ function such that $\chi_p=1$ on $\Sigma_p(1/2)$ and $\chi_p=0$ on $\Sigma\backslash\Sigma_p(1)$. Define the smooth $(0,1)$-form $\tau'$ on $\nS$ as
\[\tau'=\tau-\sum_{p\in Z(q)}\bar{\partial}(\tilde{\pi}^\ast(\chi_p)\cdot\beta_p),\]
which induces a 1-cocycle $F_{\tau'}\in A^1(\mathsf{Def}(\mathcal{E},t\varphi))$ such that $[F_{\tau'}]=\iota_t^{\mathrm{v}}([\tau])$. Let
\[{\tt V}'(\tau,t)=F_{\tau'}+\sum_{p\in Z(q)}(\dbarE+\ad\,t\varphi)(\chi_p\cdot (F_{\bar{\beta}_p})^\dagger_{H_t}),\]
which is a smooth section of $\Omega^1_\Sigma\otimes\End^0 (\mathcal{E})$. By the construction and Lemmas \ref{lem:Ftau_Est}-\ref{lem:Vert_Local_Est2} we have the following.
\begin{lemma}\label{lem:V_prime_Est}
\begin{enumerate}
  \item $(\dbarE+\ad\, t\varphi){\tt V}'(\tau,t)=0$ and $[{\tt V}'(\tau,t)]=\iota_t^{\mathrm{v}}([\tau])$.
  \item There exist $B_1,B_2>0$, such that $\ |{\tt V}'(\tau,t)-F_\tau|_{H_t,g_\Sigma}\leq B_1\exp(-B_2 t)\lVert \tau\rVert\ $ on $\Sigma\backslash \bigcup \Sigma_p(1/2)$ and $|{\tt V}'(\tau,t)-V_p(\tau,t)|_{H_t,g_\Sigma}\leq B_1\exp(-B_2 t)\lVert \tau\rVert$ on $\Sigma_p$.
  \item There exists $B>0$ such that $|{\tt V}'(\tau,t)|_{H_t,g_\Sigma}\leq Bt^{2/(m_p+1)}\lVert\tau\rVert$ and $\lVert {\tt V}'(\tau,t)\rVert_{L^2,H_t}\leq B\lVert \tau\rVert$.
  \item There exist $B_3,B_4>0$ such that $|(\dbarE+\ad\,t\varphi)^\ast_{H_t,g_\Sigma}{\tt V}'(\tau,t)|_{H_t}\leq B_3\exp(-B_4t)\lVert\tau\rVert$.
\end{enumerate}
\end{lemma}
Let $\Delta_t$ be $\Delta_{H_t,g_{\Sigma}}$ defined in \eqref{eq:Laplacian}. There exists a unique $\gamma(\tau,t)\in A^0(\End^0(\mathcal{E}))$ such that
\[\Delta_t\gamma=(\dbarE+\ad\, t\varphi)^\ast_{H_t,g_\Sigma}(\dbarE+\ad\,t\varphi)\gamma(\tau,t)=(\dbarE+\ad\,t\varphi)^\ast_{H_t,g_\Sigma}{\tt V}'(\tau,t).\]
\begin{proposition}\label{Prop:Bound_Inv_Laplacian}
    There exists $C>0$ such that $\lVert\Delta_t s\rVert_{L^2,H_t}\geq C\lVert s\rVert_{L^2,H_t}$ for any $t\geq 1$ and $s\in A^0(\End^0(\mathcal{E}))$.
\end{proposition}
\begin{proof}
Since $(\mathcal{E},t\varphi)$ is stable, we have $\ker\Delta_t=\{0\}$. Let $a_t>0$ be the first eigenvalue of $\Delta_t$. It suffices to show that $\inf_{t\geq 1} a_t>0$. Suppose on the contrary that there is a sequence $t(i)\to\infty$ such that $a_{t(i)}\to 0$. There is a sequence $s_i\in A^0(\End^0(\mathcal{E}))$ such that $\lVert s_i\rVert_{L^2,H_{t(i)}}=1$ and $\Delta_{t(i)}s_i=a_{t(i)}s_i$. We have
\[\lVert t[\varphi,s_i]\rVert_{L^2,H_{t(i)}}+\lVert t[\varphi^\dagger_{H_{t(i)}},s_i]\rVert_{L^2,H_{t(i)}}+\lVert \dbarE (s_i)\rVert_{L^2,H_{t(i)}}+\lVert\partial_{E,H_{t(i)}}(s_i)\rVert_{L^2,H_{t(i)}}\to 0. \]
After passing to a subsequence, we may assume that $s_i$ converges weakly to $s_\infty$ in $L_1^2$ locally on $\Sigma^\circ$. We have
\begin{equation}\label{eq:s_infty_condition}
    \dbarE s_\infty=[\varphi,s_\infty]=\partial_{E,H_\infty} s_\infty=[\varphi^\dagger_{H_\infty},s_\infty]=\Tr\, s_\infty=0.
\end{equation}

The proof of \cite[Lemma~2.44]{mochizuki2023asymptotic2} applies to our case, which implies that $s_\infty$ is a nonzero bounded section of $E$ on $\Sigma^\circ$. Let $p$ be an odd zero of $q$. Recall that in a neighborhood $U$ of $p$, we have
\[\varphi=\begin{pmatrix}0&z^d\\z^{m-d}&0 \end{pmatrix},\quad H_\infty=\begin{pmatrix} r^{\frac{1}{2}(m-2d)}&0\\0& r^{-\frac{1}{2}(m-2d)}\end{pmatrix}. \]
By \eqref{eq:s_infty_condition} we have $s_\infty=f\varphi$ or $f \Id_E$ for $f$ holomorphic on $U\backslash\{p\}$, and $|s_{\infty}|_{H_\infty}$ is constant on $\Sigma^\circ$. Then we must have $f=0$ and $s_\infty=0$ on $U\backslash\{p\}$. $|s_{\infty}|_{H_\infty}$ is constant, so $s_\infty=0$ on $\Sigma^\circ$. However, we have $\lVert s_\infty\rVert_{L^2,H_\infty}=1$, which is a contradiction.
\end{proof}
\begin{lemma}\label{lem:Vert_Gamma_Est}
  There exist $B_1,B_2>0$ such that \[|\gamma(\tau,t)|_{H_t}+|(\dbarE+\ad\,t\varphi)\gamma(\tau,t)|_{H_t,g_\Sigma}\leq B_1\exp(-B_2 t)\lVert \tau\rVert.\]
\end{lemma}
\begin{proof}
This is similar to \cite[Lemma~4.20]{mochizuki2023asymptotic2}, by using Lemma \ref{lem:V_prime_Est} (4), Proposition \ref{Prop:Bound_Inv_Laplacian}, and \cite[Proposition~2.36]{mochizuki2023asymptotic2}.
\end{proof}
Define ${\tt V}(\tau,t):={\tt V}'(\tau,t)-(\dbarE+\ad\,t\varphi)\gamma(\tau,t)$. Then ${\tt V}(\tau,t)$ is a harmonic representative of $\iota_t^{\mathrm{v}}([\tau])$.
\begin{proposition}\label{prop:vert_comp}
  There exist $B_1,B_2>0$ such that
  \begin{equation}\label{eq:Vert_Comp}
      |g_{L^2}(\iota_t^\mathrm{v}(\tau),\iota_t^\mathrm{v}(\tau))-\gsf(\iota_t^\mathrm{v}(\tau),\iota_t^\mathrm{v}(\tau))|\leq B_1\exp(-B_2 t)\lVert \tau\rVert^2.
  \end{equation}
\end{proposition}
\begin{proof}
  The inequality is equivalent to
  \[\Big|\int_\Sigma \big(H_t({\tt V}(\tau,t)^{1,0},{\tt V}(\tau,t)^{1,0})-H_t({\tt V}(\tau,t)^{0,1},{\tt V}(\tau,t)^{0,1})\big)+\int_{\nS}\tau\bar{\tau} \Big|\leq B_1\exp(-B_2 t)\lVert\tau\rVert^2.\]
   Note that $(\dbarE+\ad\, t\varphi)^\ast_{H_t,g_\Sigma}{\tt V}(\tau,t)=0$ and $F_{\tau'}$ only has $(0,1)$-part, we have
  \begin{align*}
    \int_\Sigma \big(H_t({\tt V}(\tau,t)^{1,0},{\tt V}(\tau,t)^{1,0})-H_t({\tt V}(\tau,t)^{0,1},{\tt V}(\tau,t)^{0,1})\big)=-\int_\Sigma H_t(F_{\tau'},{\tt V}(\tau,t)^{0,1})&\\=-\int_{\Sigma\backslash\bigcup\Sigma_p(1/2)} \Tr(F_{\tau'}\cdot ({\tt V}(\tau,t)^{0,1})_{H_t}^\dagger)&,
  \end{align*}
  since $F_{\tau'}$ is supported on $\Sigma\backslash\bigcup\Sigma_p(1/2)$. Since $\partial \tau=0$, we have \[\int_{\nS}\tau\bar{\tau}=\int_{\nS}\tau'\bar{\tau}=\int_{\Sigma\backslash\bigcup\Sigma_p(1/2)}\Tr(F_{\tau'}\cdot (F_\tau)^\dagger_{H_\infty}).\]
  \eqref{eq:Vert_Comp} follows from Lemma \ref{lem:V_prime_Est}, Lemma \ref{lem:Vert_Gamma_Est}, and the fact that $H_t-H_\infty=O(e^{-\delta t})$ on $\Sigma\backslash\bigcup\Sigma_p(1/2)$.
\end{proof}
\subsubsection{Horizontal deformations}\label{subsubsec:hor_deform}
Let $\nu\in H^0(\nS,K_{\nS})^-$ be a holomorphic 1-form on $\nS$. The multiplication of $\nu$ on $\calL$ induces a smooth section of $\Omega^{1,0}_{\Sigma^\circ}\otimes \End^0(\mathcal{E})$, which is a harmonic 1-form of $(\End^0(\mathcal{E}),\ad\,\varphi,H_\infty)|_{\Sigma^\circ}$. Similar to Lemma \ref{lem:Ftau_Est}, we have the following.
\begin{lemma}\label{lem:Fnu_Est}
    There exist positive constants $B_i$ ($i=0,1,2$) such that
    \[ |F_\nu|_{H_t,g_\Sigma}\leq B_0\lVert\nu\rVert,\quad |(\dbarE+\ad\,t\varphi)^\ast_{H_t,g_{\Sigma}}F_\nu|_{H_t}\leq B_1\exp(-B_2t)\lVert\nu\rVert,\]
    on $\Sigma\backslash N(q)$ for any $t\geq 1$.
\end{lemma}

Let $\iota_t^{\mathrm{h}}:H^0(\nS,K_{\nS})^-\to H^1(\Sigma,\mathsf{Def}(\mathcal{E},t\varphi))$ be the injective map defined in Corollary~\ref{prop:splitting}, where we identify $\nSt$ with $\nS$ as before. We will study the harmonic representative ${\tt H}(\nu,t)$ of $\iota_t^{\mathrm{h}}(\nu)$.

\newcommand{\m}{\mathrm{m}}
Let $p\in Z(q)$. There exists a holomorphic function $\alpha_p$ on $\nS|_{\Sigma_p}$ such that $\partial \alpha_p=\nu|_{\nS|_{\Sigma_p}}$ and $\alpha_p(\tilde{p})=0$ for all $\tilde{p}\in\tilde{\pi}^{-1}(p)$. The multiplication of $\alpha_p$ on $\calL$ induces a holomorphic endomorphism $F_{\alpha_p}$ of $\mathcal{E}_p$ such that $[\varphi,F_{\alpha_p}]=0$.

Let $H_t^{\app}$ be the metric constructed in Section \ref{subsec:app_sol}. Then $H_t^{\app}$ is compatible with $C_p$, $H_t^{\app}=H_\infty$ in $\Sigma\backslash\bigcup \Sigma_p(1)$, and $H_t^{\app}=H_t^{\m}$ in $\Sigma_p(1/2)$ where $H_t^{\m}$ is the model fiducial solution defined in Section \ref{subsec:ModelSolution}.  We define the family of tangents
\[{\tt H}_p(\nu,t)=(\partial_{E,H_t^{\app}}+\ad\,t\varphi^\dagger_{H_t^{\app}})F_{\alpha_p} \]
contained in $A^1(\mathsf{Def}(\mathcal{E}_p,t\varphi_p,C_p))$, which is a harmonic 1-form of $(\End^0(\mathcal{E}_p),t\ad\,\varphi_p,H_t^{\m})$ on $\Sigma_p(1/2)$. The following lemma is similar to Lemma \ref{lem:Vert_Local_Est1}.
\begin{lemma}\label{lem:Hor_Local_Est1}
 For any $1/4<r_2<r_1<2$, there exist $B_i>0$ ($i=0,1,2,3$) such that
 \[|F_{\alpha_p}|\leq B_0\lVert \nu\rVert,\quad |{\tt H}_p(\nu,t)-F_\nu|_{H_t}\leq B_1e^{-B_2 t}\lVert \nu\rVert,\quad |{\tt H}_p(\nu,t)|_{H_t}\leq B_3\lVert \nu\rVert,\]
  on $\Sigma_p(r_1)\backslash \Sigma_p(r_2)$ for any $t\geq 1$.
\end{lemma}
By Lemma \ref{lemm:local_exactness} and Lemma \ref{lem:Hor_Local_Est1} we have the following.
\begin{lemma}[{\cite[Lemma~4.13]{mochizuki2023asymptotic2}}]
  There exists a unique $\sigma_p(\nu,t)\in A^0(\Sigma_p^\circ,\Asy_{C_p}^0(\mathcal{E}_p))$ such that
  \[{\tt H}_p(\nu,t)-F_\nu=(\dbarE+\ad\, t\varphi)\sigma_p(\nu,t),\quad |\sigma_p(\nu,t)|_{H_t^{\app}}\leq B_1e^{-B_2 t}\lVert \nu\rVert \text{ on }\Sigma_p(r_1)\backslash \Sigma_p(r_2) ,\]
  for some constants $B_1,B_2>0$ and any $1/4<r_2<r_1<2$.
\end{lemma}
Similar to Lemma \ref{lem:Vert_Local_Est2}, we have the following.
\begin{lemma}\label{lem:Hor_Local_Est2}
For any $0<r<2$, there exist $B,B'>0$ such that $\lVert {\tt H}_p(\nu,t) \rVert_{L^2(\Sigma_p(r)),H_t}\leq B\lVert\nu\rVert$ and $|{\tt H}_p(\nu,t)|_{H_t,g_\Sigma}\leq B't^{2/(m_p+1)}\lVert\nu\rVert$ on $\Sigma_p(r)$ for any $t\geq 1$.
\end{lemma}

\begin{lemma}\label{lem:Hor_Local_Est3}
There exist $B_1,B_2>0$ such that for any $t\geq 1$ we have
\[ |(\dbarE+t\ad\,\varphi)^{\ast}_{H_t,g_\Sigma} {\tt H}_p(\nu,t)|_{H_t}\leq B_1\exp(-B_2 t)\lVert\nu\rVert. \]
\end{lemma}
\begin{proof}
Note that $(\dbarE+t\ad\,\varphi)^{\ast}_{H_t^{\m},g_\Sigma}{\tt H}_p(\nu,t)=0$ on $\Sigma_p(1/2)$. The result follows from the previous lemma and Theorem \ref{thm:ExpConvergence}.
\end{proof}

There is a cocycle ${\tt H}'(\nu,t)\in A^1(\mathsf{Def}(\mathcal{E},t\varphi))$ such that the following conditions hold.
\begin{enumerate}
  \item On $\Sigma_p(1/2)$, we have $ {\tt H}'(\nu,t)={\tt H}_p(\nu,t)$.
  \item On $\Sigma_p(1)\backslash \overline{\Sigma}_p(1/2)$, we have ${\tt H}'(\nu,t)=F_\nu+(\dbarE+\ad\,t\varphi)(\chi_p\cdot \sigma_p(\nu,t))$.
  \item On $\Sigma\backslash \bigcup_{p\in Z(q)}\Sigma_p(1)$, we have ${\tt H}'(\nu,t)=F_\nu$.
\end{enumerate}
By the construction and Lemmas \ref{lem:Fnu_Est}-\ref{lem:Hor_Local_Est3}, we obtain the following.
\begin{lemma}\label{lem:H_prime_Est}
  \begin{enumerate}
 \item $(\dbarE+\ad\, t\varphi){\tt H}'(\nu,t)=0$, and $\rho_t({\tt H}'(\nu,t))=\pi_{\ast}(\nu)$.
 \item There exist $B_1,B_2>0$ such that $|{\tt H}'(\nu,t)-F_{\nu}|_{H_t,g_\Sigma}\leq B_1\exp(-B_2t) \lVert \nu\rVert$ on $\Sigma\setminus\bigcup \Sigma_p(1/2)$ and $|{\tt H}'(\nu,t)-{\tt H}_p(\nu,t)|_{H_t,g_\Sigma}\leq B_1\exp(-B_2t)\lVert \nu\rVert$ on $\Sigma_p(1)$.
\item There exists $B>0$ such that $|{\tt H}'(\nu,t)|_{H_t,g_\Sigma}\leq Bt^{2/(m_p+1)}\|\nu\|$ and $\|{\tt H}'(\nu,t)\|_{L^2,H_t}\leq B\|\nu\|$ for any $t\geq 1$.
 \item There exist $B_3,B_4>0$ such that $\big|(\dbarE+\ad\, t\varphi)_{H_t,g_\Sigma}^{\ast}{\tt H}'(\nu,t)\big|_{H_t}\leq B_3\exp(-B_4t)\lVert\nu\rVert$ on $\Sigma$ for any $t\geq 1$.
  \end{enumerate}
\end{lemma}

There exists a unique $\gamma(\nu,t)\in A^0(\End^0(\mathcal{E}))$
such that
\[
 \Delta_t \gamma(\nu,t)
 =(\dbarE+\ad\, t\varphi)^{\ast}_{H_t,g_\Sigma}{\tt H}'(\nu,t).
\]
Similar to Lemma \ref{lem:Vert_Gamma_Est}, we have the following.
\begin{lemma}\label{lem:Hor_Gamma_Est}
There exist $B_1,B_2>0$ such that
the following holds on $\Sigma$ for any $t\geq 1$:
\begin{equation}
 |\gamma(\nu,t)|_{H_t}
+|(\dbarE+\ad\, t\varphi)\gamma(\nu,t)|_{H_t,g_\Sigma}
 \leq B_1\exp(-B_2t)\|\nu\|.
\end{equation}
\end{lemma}
Let ${\tt H}(\nu,t)
 ={\tt H}'(\nu,t)-(\dbarE+\ad\, t\varphi) \gamma(\nu,t)$.
Then, ${\tt H}(\nu,t)$ is a harmonic $1$-form of $(\End^0(\mathcal{E}),t\ad\,\varphi,H_t)$,
satisfying
$[{\tt H}(\nu,t)]=[{\tt H}'(\nu,t)]=\iota_t^{\mathrm{h}}(\nu)$.
\begin{proposition}\label{prop:hor_comp}
There exist $B_1, B_2>0$ such that
\begin{align}
 \big|
 g_{L^2}(\iota^{\mathrm{h}}_t(\nu),\iota^{\mathrm{h}}_t(\nu))
 -g_{\mathrm{sf}}(\iota^{\mathrm{h}}_t(\nu),\iota^{\mathrm{h}}_t(\nu))
 \big|&\leq B_1\exp(-B_2t)\|\nu\|^2. \label{eq:Hor_Comp}\\
  \big|g_{L^2}(\iota^{\mathrm{v}}_t(\tau),\iota^{\mathrm{h}}_t(\nu))\big|&\leq B_1\exp(-B_2t)\|\nu\|\|\tau\|. \label{eq:Mix_Comp}
\end{align}
\end{proposition}
\begin{proof}
We first prove \eqref{eq:Hor_Comp}. By Lemmas \ref{lem:H_prime_Est}, \ref{lem:Hor_Gamma_Est}, and Theorem \ref{thm:ExpConvergence}, it suffices to show
\begin{multline*}
    \Big|\int_{\Sigma_p(1/2)} H_t^{\m}({\tt H}_p(\nu,t)^{1,0},{\tt H}_p(\nu,t)^{1,0})-H_t^{\m}({\tt H}_p(\nu,t)^{0,1},{\tt H}_p(\nu,t)^{0,1})-\int_{\nS|\Sigma_p(1/2)}\nu \bar{\nu}\Big|\\
    \leq B_1\exp(-B_2 t)\lVert\nu\rVert^2. \end{multline*}
Using the integration by parts formula from \cite[Proposition~2.27]{mochizuki2023asymptotic2}, the left-hand side equals
\[\Big|\int_{\partial \Sigma_p(1/2)}H_t^{\m}(F_{\alpha_p},{\tt H}_p(\nu,t)^{1,0})-H_t^{\m}(F_{\alpha_p},{\tt H}_p(\nu,t)^{0,1})-\int_{\nS|\Sigma_p(1/2)}\nu \bar{\nu} \Big|. \]
 By Lemma \ref{lem:Hor_Local_Est1}, we have
 \[\Big|\int_{\partial \Sigma_p(1/2)} H_t^{\m}(F_{\alpha_p},{\tt H}_p(\nu,t)^{1,0})\Big|\leq B_1\exp(-B_2 t)\lVert\nu\rVert^2. \]
 On the other hand,
 \[\int_{\nS|_{\Sigma_p(1/2)}}\nu\bar{\nu}=\int_{\partial\nS|_{\Sigma_p(1/2)}}\alpha_p\bar{\nu}=\int_{\partial\Sigma_p(1/2)}\Tr(F_{\alpha_p}\cdot F_{\bar{\nu}}).  \]
 By Lemma \ref{lem:Hor_Local_Est1} and the exponential decay of $H_t^{\m}-H_\infty$ near $\partial\Sigma_p(1/2)$, we obtain
 \[ |({\tt H}_p(\nu,t)^{1,0})^\dagger_{H_t^{\m}}-F_{\bar{\nu}}|_{H_{t}^{\m}}\leq B_1\exp(-B_2 t)\lVert\nu\rVert. \]
 Then \eqref{eq:Hor_Comp} follows. Now we prove \eqref{eq:Mix_Comp}. Since $(\dbarE+\ad\, t\varphi)^\ast_{H_t,g_\Sigma}{\tt V}(\tau,t)=0$ and $F_{\tau'}$ only has $(0,1)$-part, we have
\[
    \int_\Sigma H_t({\tt V}(\tau,t),{\tt H}(\nu,t))=-\int_\Sigma H_t(F_{\tau'},{\tt H}(\nu,t)^{0,1})=-\int_{\Sigma\backslash\bigcup\Sigma_p(1/2)} \Tr(F_{\tau'}\cdot ({\tt H}(\nu,t)^{0,1})_{H_t}^\dagger),
\]
  since $F_{\tau'}$ is supported on $\Sigma\backslash\bigcup\Sigma_p(1/2)$. Then
  \eqref{eq:Mix_Comp} follows from Lemmas \ref{lem:Ftau_Est}, \ref{lem:Vert_Local_Est1}, \ref{lem:H_prime_Est}, \ref{lem:Hor_Gamma_Est}, and the fact that $H_t-H_\infty=O(e^{-\delta t})$ on $\Sigma\backslash\bigcup\Sigma_p(1/2)$.
\end{proof}
Combining Propositions \ref{prop:vert_comp} and \ref{prop:hor_comp}, Theorem \ref{thm:intro_main1} follows.

\subsection{Other strata}
Here we consider the asymptotics of $g_{L^2}$ for general strata of the Hitchin base $\calB_{\bp}$. Let $\MM_{\bp}\subset \MM$ consist of $(\calE,\varphi)$ such that the zeros of $q=\det\varphi$ correspond to the partition $\bp$. Assume that $\bp$ contains at least one odd number. By Lemma \ref{lemm:exact_sequence_M_Vmax}, for any $(\ME,\vp)\in \MM_{\bp,V_{\max}}$, there is a short exact sequence
\begin{equation}\label{eq:SES_Mp}
  0\to H^1(\nS,\mathcal{O}_{\nS})^-\to T_{(\mathcal{E},\varphi)}\MM_{\bp,V_{\max}}\xrightarrow{d\calH}H^0(\Sigma,K^2(-D_0(q)))\to 0.
  \end{equation}
By Lemma \ref{lemm:tangent_space_base_stratum} (assuming $D_0(q)$ is generic when $r_{\mathrm{even}}+r_{\mathrm{odd}}\leq 2g-2$),
\begin{align*}
  \dim H^0(\Sigma,K^2(-D_0(q)))&=r_{\mathrm{even}}+r_{\mathrm{odd}}-(g-1) \\
  &\leq g-1+\frac{r_{\mathrm{odd}}}{2}=\dim H^1(\nS,\mathcal{O}_{\nS})^-=\dim\Prym(\nS/\Sigma).
\end{align*}
The equality holds if and only if $q$ has at most order 2 zeros.

 Recall that by \eqref{eq:tau_map}, we have an injection \[\tau_{\MB_{\bp}}: H^0(\Sigma,K^2(-D_0(q))) \to H^0(\nS,K_{\nS})^-,\quad \dot{q} \mapsto \frac{\tilde{\pi}^*\dot{q}}{2\tilde{\omega}}.\] Denote the image of $\tau_{\MB_{\bp}}$ by $H^0(\nS,K_{\nS})^-_{\MB_{\bp}}$, which is identified with $H^0(\Sigma,K^2(-D_0(q)))$. Similar to Corollary \ref{prop:splitting}, the short exact sequence \eqref{eq:SES_Mp} has a splitting $\iota^{\mathrm{h}}:H^0(\nS,K_{\nS})^-_{\MB_{\bp}}\to T_{(\mathcal{E},\varphi)}\MM_{\bp,V_{\max}}$. This induces the following isomorphism:
\[T_{(\mathcal{E},\varphi)}\MM_{\bp,V_{\max}}\cong H^1(\nS,\mathcal{O}_{\nS})^-\oplus H^0(\nS,K_{\nS})^-_{\MB_{\bp}}.  \]
Using this isomorphism, we define the generalized semi-flat metric $g_{\mathrm{sf},\bp}$ on $\MM_{\bp,V_{\max}}$ by
\[g_{\mathrm{sf},\bp}|_{(\mathcal{E},\varphi)}([\tau],[\tau])=2\sqrt{-1}\int_{\nS}\big(\tau^{1,0}\wedge \overline{\tau^{1,0}}-\tau^{0,1}\wedge\overline{\tau^{0,1}}\big).\]
where $\tau$ is a harmonic 1-form on $\nS$ with $[\tau^{0,1}]\in H^1(\nS,\mathcal{O}_{\nS})^-$ and $[\tau^{1,0}]\in H^0(\nS,K_{\nS})^-_{\MB_{\bp}}$. By the same arguments as in the previous subsection, we obtain the following convergence result.
\begin{theorem}\label{thm:asymptotic_closed_strata}
  Let $(\ME,t\vp)$ be a ray in $\MM_{\bp,V_{\max}}$ (assumed to be generic when $r_{\mathrm{even}}+r_{\mathrm{odd}}\leq 2g-2$). Then there exists  $\varepsilon > 0$ such that
  \[
  (g_{L^2}|_{\MM_{\bp,V_{\max}}} - g_{\sfm,\bp})|_{(\ME, t\vp)} = \MO(e^{-\varepsilon t}),
  \]
  as $t \to \infty$.
\end{theorem}

\subsection{The case of $d=2g-2$}\label{ssec:d=2g-2}
The case of the subintegrable system $\calM^{2g-2}$ was treated in \cite{Francoetal_unramified}. In this case, a quadratic differential $q \in \calB^{2g-2}$ defines a square root of the trivial line bundle by $M=K(-\frac12 \mathsf{div}(q))$. The normalized spectral curve is the associated unramified double cover $\tilde{\pi}: \wt{S}_q \to \Sigma$. The 2-torsion points $\Prym(\wt{S}_q)[2]$ form a discrete set, hence the cover $\tilde{\pi}$ is constant on connected components of $\calB^{2g-2}$. For each connected component, the pushforward defines a hyperkähler map $T^\vee \Prym(\wt{S}_q) \to \calM^{2g-2}$ by \cite[Proposition 3.2]{Francoetal_unramified}. The connected components of $\calM^{2g-2}$ are identified with the quotient of $T^\vee \mathsf{Prym}(\wt{S}_q)$ by the Galois group. In particular, the solutions to the Hitchin equations on $\calM^{2g-2}$ decouple, and the restricted Hitchin metric is invariant under rescaling the Higgs field. Finally, the hyperkähler metric on $T^\vee \Prym(\wt{S}_q)$ coincides with the canonical flat hyperkähler metric  $g_{\sfm,2g-2}$ on the cotangent bundle induced by the flat Kähler metric on $H^0(\wt{S}_q,K_{\wt{S}_q})$, as described in \cite[Theorem 2.1]{freed1999special}. In conclusion, the restricted Hitchin metric $g_{L^2}|_{\MM^{2g-2}}$ coincides with the flat hyperkähler metric $g_{\sfm,2g-2}$.

\bibliographystyle{plain}
\bibliography{references}
\end{document}